\newtheorem{theorem}{Theorem}[section]
\newtheorem{lemma}{Lemma}[section]
\newtheorem{proposition}{Proposition}[section]
\newtheorem{corollary}{Corollary}[section]
\theoremstyle{definition}
\newtheorem*{definition}{Definition}
\newtheorem*{remark}{Remark}
\newtheorem*{example}{Example}
\numberwithin{equation}{section}
\newtheorem*{conjecture}{Conjecture}
\def\rb{\mathbb{R}}
\def\qb{\mathbb{Q}}
\def\cb{\mathbb{C}}
\def\nb{\mathbb{N}}
\def\zb{\mathbb{Z}}
\def\uP{\underline{P}}
\def\uT{\underline{T}}
\begin{document}
\title[On Salem numbers, expansive polynomials and Stieltjes continued fractions]{On Salem numbers, expansive polynomials and Stieltjes continued fractions 
}

\author{\sc Christelle Guichard}
\address{Christelle Guichard\\
Institut Fourier, CNRS UMR 5582, \\
Universit\'e de Grenoble I, \\
BP 74, Domaine Universitaire,\\
38402 Saint-Martin d'H\`eres, France}
\email{ Christelle.Guichard@ujf-grenoble.fr}
\urladdr{http://www-fourier.ujf-grenoble.fr}

\author{\sc Jean-Louis Verger-Gaugry}
\address{Jean-Louis Verger-Gaugry\\
Institut Fourier, CNRS UMR 5582, \\
Universit\'e de Grenoble I, \\
BP 74, Domaine Universitaire,\\
38402 Saint-Martin d'H\`eres, France}
\email{jlverger@ujf-grenoble.fr}
\urladdr{http://www-fourier.ujf-grenoble.fr}

\subjclass[2010]{
11 C08, 11 R06, 11 K16, 11 A55, 11 J70, 13 F20.}

\maketitle

%\begin{keywords}
%Pisot number, Salem number, interlacing, Salem polynomial, expansive polynomial, 
%association theorem, Hurwitz polynomial, 
%Hurwitz alternant, Stieltjes continued fraction.
%\end{keywords}

\begin{resume}
Dans cet article on montre que pour tout polyn\^ome $T$, de degr\'e $m \geq 4$,
\`a racines simples sans racine dans $\{\pm 1\}$, qui est soit de Salem soit cyclotomique,
il existe un polyn\^ome expansif unitaire $P(z) \in \zb[z]$ tel que
$(z-1) T(z) = z P(z) - P^{*}(z)$. Cette \'equation d'association
utilise le Th\'eor\`eme A (1995) de Bertin-Boyd d'entrecroisement de conjugu\'es sur le cercle unit\'e.
L'ensemble des polyn\^omes expansifs unitaires
$P$ qui satisfont cette \'equation d'association contient
un semi-groupe commutatif infini.
Pour tout $P$ dans cet ensemble, caract\'eris\'e par un certain crit\`ere,
un nombre de Salem est produit et cod\'e par un $m$-uplet de nombres rationnels strictement positifs
caract\'erisant la fraction conti-nue de Stieltjes (SITZ)
du quotient (alternant) d'Hurwitz corres-pondant \`a $P$.
Ce codage est une r\'eciproque \`a la Construction  de Salem (1945).
La structure de semi-groupe se transporte sur
des sous-ensembles de fractions continues de Stieltjes, ainsi que
sur des sous-ensembles de nombres de Garsia
g\'en\'eralis\'es.   
\end{resume}

\begin{abstr}
In this paper we show that for every Salem polynomial or cyclotomic polynomial,
having simple roots and no root in $\{\pm 1\}$, denoted by $T$, deg $T = m \geq 4$,
there exists a monic expansive polynomial $P(z) \in \zb[z]$ such that
$(z-1) T(z) = z P(z) - P^{*}(z)$. 
This association equation makes use of 
Bertin-Boyd's Theorem A (1995) of interlacing of conjugates on the unit circle.
The set of monic expansive polynomials $P$ satisfying this
association equation 
contains an infinite commutative semigroup.
For any $P$ in this set, characterized by a certain criterion,
a Salem number $\beta$ is produced and coded
by an $m$-tuple of positive rational numbers characterizing  
the (SITZ) Stieltjes continued fraction of the corresponding Hurwitz quotient (alternant)
of $P$. This coding is a converse method to the Construction of Salem (1945).
Subsets of Stieltjes continued fractions, and
subsets of generalized Garsia numbers, inherit this semigroup structure.
\end{abstr}

\vspace{0.5cm}

\tableofcontents

\bigskip

\section{Introduction}
\label{S1}

A Salem number is an algebraic integer $\theta > 1$ such that the Galois conjugates
$\theta^{(i)}$ of $\theta$ satisfy: $|\theta^{(i)}| \leq 1$ with at least one conjugate 
of modulus 1. The set of Salem numbers is denoted by ${\rm T}$.
A Pisot number is a real algebraic integer, all of whose other conjugates 
have modulus strictly less than 1. The set of Pisot numbers is traditionally
denoted by S. 

An open problem is the characterization of the set $\overline{{\rm T}}$
of limit points of T. A first Conjecture of Boyd \cite{boyd00}
asserts that the union S $ \cup$ T is closed. A second Conjecture of Boyd
(\cite{boyd0}, p 327) asserts that the first derived set of S $ \cup$ T is
S. At least, S is included in $\overline{{\rm T}}$ (\cite{bertinetal}, Theorem 6.4.1).
In 1945, Salem (\cite{salem}, Theorem IV) developped the so-called {\it Construction of Salem}
to show that every Pisot number is a limit point of convergent sequences of Salem numbers
from both sides. 
Siegel \cite{siegel} proved that the smallest Pisot number is $\theta_0 = 1.32\ldots$,
dominant root of $X^3 - X -1$, implying that the number of Salem numbers 
in any interval $(1,M)$, with $M \geq \theta_0$, is infinite. However,
if 1 were a limit point of T, then T
would be everywhere dense in $[1, +\infty)$ since $\theta \in {\rm T}$
implies $\theta^m \in {\rm T}$ for all positive integer $m$ 
by \cite{salem0}, p. 106, 
\cite{pisot}, p. 46. That T is everywhere dense in
$[1,+\infty)$ is probably false \cite{boyd00}; even though the 
Conjecture of Lehmer were true for Salem numbers, 
the open problem of the existence of a smallest Salem number $> 1$
would remain to be solved.
Converse methods to the Construction of Salem 
to describe interesting sequences of algebraic numbers, eventually Salem numbers,
converging to a given Salem number
are more difficult to establish.

{\it Association theorems} between Pisot polynomials and Salem polynomials, 
which generically make use of the polynomial relation 
\begin{center}
 $(X^2 + 1) P_{Salem} = X P_{Pisot}(X) + P^{*}_{Pisot}(X),$
\end{center}
were introduced by Boyd (\cite{boyd0} Theorem 4.1), 
Bertin and Pathiaux-Delefosse (\cite{bertinboyd}, 
\cite{bertinpathiauxdelefosse} pp 37--46, \cite{bertinetal} chapter 6), 
to investigate the links between infinite collections of Pisot numbers 
and a given Salem number.

In the scope of studying interlacing on the unit circle and
$\overline{{\rm T}}$, McKee and Smyth \cite{mckeesmyth} 
recently used new interlacing theorems, different from
those introduced (namely Theorem A and Theorem B) by Bertin and Boyd 
\cite{bertinboyd} \cite{bertinpathiauxdelefosse}.
These interlacing theorems, and their limit-interlacing versions, 
turn out to be fruitful. 
Theorem 5.3 (in \cite{mckeesmyth}) shows that all Pisot numbers
are produced by a suitable (SS) interlacing condition, 
supporting the second Conjecture of Boyd; 
similarly Theorem 7.3 (in \cite{mckeesmyth}), using Boyd's association theorems, 
shows that all Salem numbers are produced by interlacing and
that a classification of Salem numbers
can be made.

In the present note, we reconsider the interest of 
the interlacing Theorems  of \cite{bertinboyd}, as
potential tools for this study of limit points, 
as alternate analogues of those of McKee and Smyth;
we focus in particular on Theorem A of \cite{bertinboyd}
(recalled as Theorem \ref{theoremA} below).

The starting point is the observation 
that expansive polynomials are basic ingredients
in the proof of Theorem A and that the theory of
expansive polynomials recently, and independently, received 
a strong impulse from Burcsi \cite{burcsi}.
The idea is then to bring back to Theorem A a certain number of 
results from this new theory:
Hurwitz polynomials, alternants, their coding in continued fractions...
In section \ref{S2} Theorem A and McKee and Smyth's interlacing 
modes of conjugates
on the unit circle are recalled (to fix the notation);
association theorems between
Salem polynomials and expansive polynomials 
are obtained. In particular, the main theorem we prove
is the following.

\begin{theorem}
\label{main1}
Let $T$ in $\zb[z]$, with simple roots 
$\not\in \{\pm 1\}$ of degree $m \geq 4$,
be either a cyclotomic polynomial or a Salem polynomial. Then
there exists a monic expansive polynomial
$P(z)  \in \zb[z]$ of degree $m$ such that
\begin{equation}
\label{main1eq}
(z-1) T(z) = z P(z) - P^{*}(z).
\end{equation}
\end{theorem}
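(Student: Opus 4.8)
\emph{Proof strategy.} The plan is to first turn \eqref{main1eq} into a linear problem whose set of integer solutions is an explicit affine lattice of monic polynomials, and then to single out an expansive member of that lattice by invoking Theorem~\ref{theoremA}. Write $P(z)=z^{m}+a_{m-1}z^{m-1}+\dots+a_{0}$ and $Q(z):=zP(z)-P^{*}(z)$. A short computation gives $Q(0)=-P^{*}(0)=-1$ and $Q^{*}(z)=P^{*}(z)-zP(z)=-Q(z)$, so $Q$ is monic, anti-reciprocal (meaning $Q^{*}=-Q$), of degree $m+1$, with constant term $-1$; conversely, on the space of polynomials of degree $\le m$ the linear map $P\mapsto zP(z)-P^{*}(z)$ has image the anti-reciprocal polynomials of degree $\le m+1$ and kernel the palindromic polynomials of degree $\le m-1$. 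Now $T$ is reciprocal (this holds for a Salem polynomial and for a cyclotomic polynomial with no root in $\{\pm1\}$), of even degree $m$, with $T(0)=1$; hence $(z-1)T(z)$ is monic, anti-reciprocal of degree $m+1$ with constant term $-1$, so it lies in the image. Comparing coefficients of $z^{m+1}$ forces the leading coefficient of any solution $P$ to equal $1$, so every solution is automatically monic of degree $m$, and the linear relations among the $a_{j}$ imposed by matching the coefficients of $(z-1)T(z)$ show that the solutions lying in $\zb[z]$ form a nonempty coset $P_{0}+\Lambda$ of an integer lattice $\Lambda$ of rank $m/2$. The entire content of the theorem is therefore that some point of $P_{0}+\Lambda$ is expansive.

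To produce an expansive solution I would read \eqref{main1eq} on the unit circle. For $|z|=1$ one has $|P^{*}(z)|=|P(z)|$, and $zP(z)-P^{*}(z)$ vanishes exactly at the unimodular zeros of $(z-1)T(z)$: the point $z=1$, together with the $m-2$ simple unimodular zeros of $T$ in the Salem case, or all $m$ simple unimodular zeros of $T$ in the cyclotomic case. Requiring $P$ expansive is requiring that $P^{*}$ have all its zeros in $\{|z|<1\}$, i.e. that $z\,P(z)/P^{*}(z)$ be a finite Blaschke-type quotient; the demand that $z\,P(z)/P^{*}(z)-1$ vanish at precisely the listed unimodular points then becomes an interlacing condition on the unit circle between those points and the zeros of $P^{*}$. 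Theorem~\ref{theoremA} is exactly the device that manufactures an expansive polynomial out of such an interlacing configuration, so the work is to verify that the unimodular zeros of $T$ together with $z=1$ meet the hypothesis of Theorem~\ref{theoremA} --- separately in the Salem case (where one must also keep track of the off-circle reciprocal pair $\theta,\,1/\theta$) and in the cyclotomic case (where $(z-1)T(z)$ has all $m+1$ zeros on the circle) --- and then to read $P$ off from its conclusion.

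The step I expect to be the main obstacle is obtaining from Theorem~\ref{theoremA} a polynomial that is at once integral and \emph{strictly} expansive. An interlacing theorem natively yields a real polynomial, so one must show that the open region of expansive solutions inside the affine solution space actually meets the lattice $P_{0}+\Lambda$; this should follow because ``expansive'' is an open condition and the $m/2$ palindromic degrees of freedom can be moved within $\zb$, and it is here that the hypotheses $m\ge4$ and ``$T$ has simple roots, $T(\pm1)\ne0$'' are used: they make $z=1$ a simple zero of $(z-1)T(z)$, keep $z=-1$ off the zero set (so that $P(-1)\ne0$ is consistent with the association equation, as it must be for an expansive $P$), and force the interlacing to be strict, preventing a zero of $P$ from landing on $|z|=1$. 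Granting these points, the polynomial produced by Theorem~\ref{theoremA} is the desired monic expansive element of $\zb[z]$ satisfying \eqref{main1eq}.
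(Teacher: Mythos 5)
There is a genuine gap, and it sits exactly where you predicted: in passing from the affine lattice of integer solutions of \eqref{main1eq} to an \emph{expansive} member of it. Your first difficulty is that the appeal to Theorem~\ref{theoremA} is circular. The hypothesis of that theorem is not a condition on the unimodular zeros of $T$ alone: it requires you to \emph{exhibit} a reciprocal polynomial $L\in\zb[X]$ with $L(0)=q-1$, all roots on $|z|=1$, strictly interlacing those of $KT$. But in the construction underlying Theorem~\ref{theoremA} one has $L=P_1-Q_1=P-T$ (equation \eqref{elP1Q1}), so producing such an integral $L$ is literally equivalent to producing the expansive $P$ you are trying to construct. Nothing in your sketch explains where this $L$ comes from; "verify that the unimodular zeros of $T$ together with $z=1$ meet the hypothesis of Theorem~\ref{theoremA}" is not a verification one can perform, because the hypothesis quantifies existentially over $L$. (There is also the separate issue that Theorem~\ref{theoremA} is stated for a Salem number $\theta$ with minimal polynomial $T$, so it does not directly cover the cyclotomic case of Theorem~\ref{main1} at all.)

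Your second difficulty is the final step, which you flag as "the main obstacle" and then dispose of with "this should follow because expansive is an open condition and the $m/2$ palindromic degrees of freedom can be moved within $\zb$." Openness is not enough: a nonempty open subset of $\rb^{m/2}$ can perfectly well miss $\zb^{m/2}$, and you have not even shown the region of expansive solutions is nonempty. The paper's proof does two things you omit. First it proves a criterion of expansivity (Theorems~\ref{QtoP} and \ref{critt2.4.3}): under \eqref{main1eq}, $P$ is expansive if and only if $P(1)T(1)<0$ and $(\alpha-1)\alpha^{1-m}P(\alpha)T'(\alpha)<0$ at each unimodular root $\alpha$ of $T$ --- these are $m/2$ \emph{linear} inequalities in the free half-coefficient vector $(p_0,\dots,p_{m/2-1})$. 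Second, it shows the resulting open polyhedral region (the admissible cone $\emph{C}^{+}$) is a nondegenerate cone: the $m/2$ bounding hyperplanes intersect in a single explicitly computed vertex $\mathcal{M}_0$ (coming from $P_0(z)=(z-1)(z-\tfrac12(\beta+\beta^{-1}))\prod_i(z^2-(\alpha_i+\overline{\alpha_i})z+1)$), so the cone is nonempty, open and unbounded, and therefore contains infinitely many points of $\zb^{m/2}$. It is this cone structure, not mere openness, that delivers integer solutions. To repair your argument you would need either to supply this criterion-plus-cone analysis or to give an independent construction of the integral interlacing polynomial $L$; as written, the proposal establishes only the (correct) linear-algebra reduction, which coincides with the paper's relations \eqref{lienPiTi}.
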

 
In fact, the set $\mathcal{P}_{T}$ (resp. $\mathcal{P}_{T}^{+}$) 
of monic expansive polynomials
$P(z)  \in \zb[z]$ of degree $m$ which satisfy
\eqref{main1eq} (resp. of polynomials 
$P  \in \mathcal{P}_{T}$
having positive constant coefficient) 
is proved to be infinite in \S \ref{sec:SS4.2S2}
and \S \ref{sec:SS4.3S2}.
Moreover we prove that 
$\mathcal{P}_{T}^{+} \cup \{T\}$ has 
a commutative semigroup structure with internal law:
\begin{equation}
\label{loiPP} 
(P, P^{\dag}) ~\to~ P \oplus P^{\dag} ~:=~ P + P^{\dag} - T,
\end{equation}
where $\{T\}$ is the neutral element. 
We prove Theorem \ref{caseTWO}, as analogue of 
Theorem \ref{main1} for the existence of
expansive polynomials with negative constant terms. 
In section \ref{S3} only Salem polynomials $T$ are considered; 
a Stieltjes continued fraction is
shown to code analytically not only a Salem number 
but also all its interlacing conjugates, using Hurwitz polynomials.
The second main theorem is concerned with the algebraic structure
of this coding, as follows.

\begin{theorem} 
\label{main2}
Let $T \in \zb[z]$ be a Salem polynomial with simple roots
$\not\in \{\pm 1\}$ of degree $m$.
For each monic expansive polynomial
$P(z)  \in \mathcal{P}_{T}$, denote
\begin{equation}
\label{main2eq} 
[f_1 / f_2 / \ldots / f_m](z) = 
\cfrac{f_1}{ 1+ \cfrac {f_2 z}{ 1+\cfrac{f_3 z}{ 1+ \cfrac{...}{1+f_m z}}}}
\end{equation}
the Hurwitz alternant $h_{P}(z)$ uniquely associated with $P$, written
as a Stieltjes continued fraction. 
Let $\mathcal{F}_{T}$, resp. $\mathcal{F}_{T}^{+}$, be the set of 
all $m$-tuples 
$\mbox{}^{t}(f_1, f_2, \ldots, f_m) \in (\qb_{> 0})^m$
such that 
$[f_1 / f_2 / \ldots / f_m](z) = h_{P}(z)$ for $P$ running over
$\mathcal{P}_{T}$, resp. $\mathcal{P}_{T}^{+}$.
Then 
\begin{itemize}
\item[(i)] $\mathcal{F}_{T}$ is discrete and
has no accumulation point in $\bigl(\rb_{> 0}\bigr)^m$,
\item[(ii)] $\mathcal{F}_{T}$ has affine dimension
equal to $m$,
\item[(iii)] the subset $\mathcal{F}_{T}^{+} \cup \{0\}$ is 
a commutative semigroup, whose internal law
is the image of \eqref{loiPP} by the mapping $P \to h_{P}$,
with $\{0\}$ as neutral element, 
\item[(iv)] the intersection of $\mathcal{F}_{T}$ with 
the hypersurface
defined by $$\{\mbox{}^{t}(x_1, \ldots, x_m) \in \rb^m \mid [x_1 / x_2 / \ldots / x_m](1) = 1\}$$
is empty.
\end{itemize}
\end{theorem}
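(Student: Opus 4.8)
The plan is to prove the four items in order, deriving everything from the association equation \eqref{main1eq} and the properties of the maps $P \mapsto h_P$ and $P \mapsto {}^t(f_1,\dots,f_m)$. The underlying principle throughout is that the coefficients of $P$ are recovered linearly (over $\qb$) from the $f_i$ via the Stieltjes continued fraction expansion, and conversely the $f_i$ are rational functions of the coefficients of $P$; since $P$ ranges over a subset of the lattice $\zb^m$ (the monic integer polynomials of degree $m$ satisfying \eqref{main1eq}), the image $\mathcal{F}_T$ is the image of a subset of a lattice under a fixed birational map with rational coefficients.

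\emph{Item (i).} First I would make the correspondence $P \leftrightarrow {}^t(f_1,\dots,f_m)$ explicit: expanding \eqref{main2eq} and clearing denominators shows that the vector of coefficients of the numerator/denominator of $h_P$ — hence, via \eqref{main1eq}, the coefficient vector of $P$ — is obtained from ${}^t(f_1,\dots,f_m)$ by a rational map whose inverse is also rational. Because $\mathcal{P}_T \subset \zb[z]$ consists of integer polynomials, its image under any fixed continuous injective map from an open subset of $\rb^m$ is discrete unless points of $\mathcal{P}_T$ accumulate; but distinct elements of $\zb^m$ are separated by distance $\ge 1$, and the inverse map (being a rational map, continuous where defined) would have to blow up near any putative accumulation point of $\mathcal{F}_T$ in $(\rb_{>0})^m$. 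So I would argue: if $f^{(k)} \to f^\infty \in (\rb_{>0})^m$ with $f^{(k)} \in \mathcal{F}_T$ distinct, then the corresponding $P^{(k)} \in \zb^m$ converge (the forward rational map being continuous at $f^\infty$, all denominators being nonzero there — this uses that the continued fraction \eqref{main2eq} with all $f_i>0$ has no pole), contradicting discreteness of $\zb^m$. The one point needing care is that the forward map is genuinely defined and finite at every point of $(\rb_{>0})^m$, which follows because positivity of the $f_i$ forces every partial denominator $1 + f_j z + \cdots$ to be nonzero at the relevant evaluation points (this is essentially the Stieltjes positivity built into the construction).

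\emph{Item (ii).} Affine dimension $m$ means $\mathcal{F}_T$ is not contained in any affine hyperplane of $\rb^m$. The cleanest route is via the semigroup/infinitude results already established: in \S\ref{sec:SS4.2S2}–\S\ref{sec:SS4.3S2} the set $\mathcal{P}_T$ is shown infinite, and more precisely (I would use) it contains translates realizing $m$ affinely independent directions — e.g. the semigroup law \eqref{loiPP} on $\mathcal{P}_T^+ \cup \{T\}$ together with the existence of enough "independent" expansive $P$'s. Concretely, I would exhibit $m+1$ polynomials $P_0,\dots,P_m \in \mathcal{P}_T$ whose coefficient vectors are affinely independent in $\zb^m$ (this should come out of the construction in \S\ref{sec:SS4.2S2}, where one has freedom to perturb $P$ by adding suitable multiples of polynomials that preserve \eqref{main1eq} while keeping expansiveness — a Rouché-type open condition), and then transport affine independence through the birational map $P \mapsto {}^t(f_1,\dots,f_m)$: a rational map with invertible (generic) differential sends an affinely independent configuration to one spanning full dimension, so the images span an $m$-dimensional affine subspace.

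\emph{Item (iii).} This is the transport of structure. By definition $\mathcal{F}_T^+$ is the image of $\mathcal{P}_T^+$ under $P \mapsto h_P \mapsto {}^t(f_1,\dots,f_m)$, and by the paragraph after \eqref{loiPP}, $\mathcal{P}_T^+ \cup \{T\}$ is a commutative semigroup under $P \oplus P^\dag = P + P^\dag - T$ with neutral element $T$. The map $P \mapsto {}^t(f_1,\dots,f_m)$ is injective on $\mathcal{P}_T$ (the continued fraction expansion is unique — this is why "uniquely associated" appears in the statement of \eqref{main2eq}), $T \mapsto 0$ (plug $P = T$ into \eqref{main1eq}: $(z-1)T = zT - T^*$ forces the alternant to be trivial), and one pushes the operation $\oplus$ forward along this injection to define the law on $\mathcal{F}_T^+ \cup \{0\}$; commutativity, associativity and the neutral element $0$ are inherited formally. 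The only thing to verify is well-definedness, i.e. that the pushed-forward law does not depend on choices — immediate from injectivity. I would remark that this forward law is \emph{not} coordinate-wise addition of the $f_i$; it is addition of the coefficient vectors of $P$ followed by the nonlinear recoding, which is the content worth stating.

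\emph{Item (iv).} Evaluating \eqref{main2eq} at $z=1$ gives $h_P(1) = [f_1/\dots/f_m](1)$, and from \eqref{main1eq} at $z=1$ one gets $0 = P(1) - P^*(1)$, while the relation between $h_P$ and $P/P^*$ (the Hurwitz alternant is built from the ratio $P^*(z)/P(z)$ or a Möbius transform thereof) pins down $h_P(1)$ to a value determined by $T$; the claim is that this value is never $1$. I expect this to be the main obstacle, since it is the one item not of "transport of structure" type and requires a genuine inequality. The strategy: express $[f_1/\dots/f_m](1)$ in closed form in terms of $T$ via \eqref{main1eq} — differentiating or evaluating $(z-1)T(z) = zP(z) - P^*(z)$ near $z=1$ relates $h_P(1)$ to $T'(1)/T(1)$ or to $\sum 1/(1-\theta_i)$ over the roots $\theta_i$ of $T$ — and then show the resulting quantity cannot equal $1$, using that $T$ is a Salem polynomial (real dominant root $>1$, a root $1/\beta \in (0,1)$, and conjugates on the unit circle, with $\pm 1$ excluded). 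Concretely, $\sum_i \frac{1}{1-\theta_i}$ has a controlled real part because the unit-circle roots come in conjugate pairs contributing $\mathrm{Re}\frac{1}{1-e^{i\varphi}} = \frac12$ each, and the two real roots $\beta, 1/\beta$ contribute $\frac{1}{1-\beta} + \frac{1}{1-1/\beta} = \frac{1}{1-\beta} - \frac{\beta}{\beta-1} = \frac{1+\beta}{1-\beta} < 0$ wait — one must track this carefully; the point is that summing these known contributions yields a number whose form (half-integer plus a negative real) excludes the single value forced by $[x_1/\dots/x_m](1)=1$. If a direct evaluation is cleaner, one instead notes $[x_1/\dots/x_m](1) = 1$ would force, after clearing the continued fraction at $z=1$, a polynomial identity among the $f_i$ incompatible with all $f_i > 0$ and the integrality constraints on $P$; I would present whichever of these two computations is shorter, with the conjugate-pair real-part bookkeeping as the conceptual core.
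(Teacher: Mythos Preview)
Your treatment of (i) and (iii) is sound and close to the paper's. For (i) the paper invokes analyticity of $\omega_3\circ\omega_2$ and the open mapping theorem, while you use continuity of the inverse $\Omega_2\circ\Omega_3$; these are two sides of the same coin. For (iii) the paper verifies $T\mapsto 0$ by a direct combinatorial identity showing that all odd-indexed coefficients $h_{2i+1}$ of $\omega_2(T)$ vanish; your remark that $T=T^*$ forces the alternant to be trivial is in fact a cleaner way to see this (reciprocity of $T$ makes $H(Z)=(Z+1)^m T\bigl(\tfrac{Z-1}{Z+1}\bigr)$ an even polynomial in $Z$).

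There are, however, real gaps in (ii) and (iv).

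\textbf{Item (ii).} You propose to exhibit $m+1$ polynomials in $\mathcal{P}_T$ with affinely independent coefficient vectors in $\zb^m$. This is impossible. The association equation \eqref{main1eq} imposes the linear constraints \eqref{lienPiTi}, so every $P\in\mathcal{P}_T^{+}$ has its coefficient vector in a fixed $\tfrac{m}{2}$-dimensional affine subspace of $\rb^m$; you cannot extract more than $\tfrac{m}{2}+1$ affinely independent points from it. The full affine dimension of $\mathcal{F}_T$ has to come from the \emph{nonlinearity} of $\omega_3\circ\omega_2$, not from affine independence on the $P$-side. The paper argues instead by contradiction: if $\mathcal{F}_T$ lay in a hyperplane, its preimage under $\Omega_2\circ\Omega_3$ would be a hypersurface containing $\mathcal{P}_T^{+}$; passing to the half-coefficient parametrisation by $C^{+}\cap\zb^{m/2}$, one gets an analytic relation satisfied by all lattice points of an open cone in $\rb^{m/2}$, hence identically --- and this is used as the contradiction. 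Your plan, as written, does not reach this point.

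\textbf{Item (iv).} Your strategy of expressing $h_P(1)$ through the roots of $T$ cannot succeed, because $h_P(1)$ genuinely depends on the choice of $P$ over $T$, not on $T$ alone. The paper's argument is a one-line identity you are missing: for $H=\omega_2(P)$ one has
\[
h_P(1)=\frac{\sum_i h_{2i+1}}{\sum_i h_{2i}}=\frac{H(1)-H(-1)}{H(1)+H(-1)},
\]
so $[f_1/\ldots/f_m](1)=1$ is equivalent to $H(-1)=0$. But $H(z)=(z+1)^m P\bigl(\tfrac{z-1}{z+1}\bigr)$ gives $H(-1)=(-2)^m p_m=2^m\neq 0$ since $P$ is monic and $m$ is even (equivalently, in the normalisation $h_0=1$, one has $H(-1)=1-h_1+h_2-\cdots+h_m\neq 0$). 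No root-counting is needed. Incidentally, your side computation $\tfrac{1}{1-\beta}+\tfrac{1}{1-1/\beta}=\tfrac{1+\beta}{1-\beta}$ is arithmetically wrong (the correct value is $1$), which is a further sign that this route is not the intended one.
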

 
To a Salem number $\beta$ and a Salem polynomial $T$ vanishing at $\beta$,
this coding associates the point set $\mathcal{F}_{T}$.
The arithmetico-analytic deformation and 
limit properties of the inverse of this coding function (as a 
converse of the `Construction of Salem') will be reported elsewhere.
By this new approach we hope to shed some light on the existence of 
very small nonempty open intervals in the neighbourhood of
$\beta$ deprived of any Salem number. The interest 
lies in the following Lemma (Appendix).

\begin{lemma}
\label{intervals_nonexistence}
If there exists an (nonempty) open interval of $(1,+\infty)$ which does not
contain any Salem number, then the Conjecture of Lehmer for Salem numbers
is true.
\end{lemma}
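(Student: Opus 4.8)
The plan is to prove the contrapositive: assuming the Conjecture of Lehmer for Salem numbers fails, I show that ${\rm T}$ is dense in $(1,+\infty)$, so that no nonempty open subinterval of $(1,+\infty)$ can be free of Salem numbers. Recall that for a Salem number $\beta$ the Mahler measure equals $\beta$ itself, so the Conjecture of Lehmer for Salem numbers is precisely the assertion $\inf {\rm T} > 1$; its negation is $\inf {\rm T} = 1$ (the infimum is $\geq 1$ by the very definition of a Salem number), i.e.\ there exist Salem numbers arbitrarily close to $1$.

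The single structural input I would use is the classical fact recalled in the Introduction: if $\theta \in {\rm T}$ then $\theta^{n} \in {\rm T}$ for every positive integer $n$ (\cite{salem0}, p.~106; \cite{pisot}, p.~46). Indeed the full set of Galois conjugates of $\theta^{n}$ is $\{(\theta^{(i)})^{n}\}$, so $\theta^{n} > 1$, its reciprocal is again a conjugate, the conjugates of $\theta$ lying on the unit circle stay on the unit circle, and the remaining ones stay in the closed unit disc; hence $\theta^{n}$ is again a Salem number.

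Now fix an arbitrary nonempty open interval $(a,b) \subseteq (1,+\infty)$; I must produce a Salem number inside it. Passing to logarithms, I look for an element of $\{\log\beta : \beta \in {\rm T}\}$ inside the open interval $(\log a, \log b)$, whose length is $\log(b/a) > 0$. Using $\inf {\rm T} = 1$, choose $\theta \in {\rm T}$ with $0 < \log\theta < \log(b/a)$. The points $n\log\theta$, $n \geq 1$, form an arithmetic progression with common gap $\log\theta$ strictly smaller than the length of $(\log a, \log b)$; since $\log a > 0$, the least $n \geq 1$ with $n\log\theta > \log a$ then also satisfies $n\log\theta < \log a + \log\theta < \log b$. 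Thus $a < \theta^{n} < b$, and by the previous paragraph $\theta^{n} \in {\rm T}$. Hence every nonempty open subinterval of $(1,+\infty)$ meets ${\rm T}$, i.e.\ ${\rm T}$ is dense in $(1,+\infty)$, contradicting the hypothesis of the Lemma. Therefore the existence of a Salem-free nonempty open interval forces $\inf {\rm T} > 1$, which is the Conjecture of Lehmer for Salem numbers.

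There is essentially no serious obstacle here: the argument is elementary once the power-stability of ${\rm T}$ is granted. The only point requiring a little care — and the only place where one might slip — is the claim that a suitable \emph{power} of the chosen small Salem number lands in $(a,b)$; this is the one-line multiplicative pigeonhole step isolated above, valid precisely because the multiplicative gap $\theta$ can be taken below $b/a$.
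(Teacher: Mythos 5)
Your proof is correct and is essentially the same argument as the paper's (given in the Appendix): both reduce the failure of Lehmer's Conjecture for Salem numbers to the existence of $\tau_0 \in {\rm T}$ with $0 < \log\tau_0 < \log b - \log a$, and then use the stability of ${\rm T}$ under taking powers together with the same multiplicative pigeonhole step to place a power of $\tau_0$ inside $(a,b)$. The only difference is presentational: you argue by contraposition where the paper argues by contradiction.
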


The theory of expansive polynomials is fairly recent
\cite{burcsi}.
The terminology ``expansive polynomial'' appeared in
the study of canonical number systems (CNS), for instance in Kov\'acs
\cite{kovacs} and in Akiyama and Gjini \cite{akiyamagjini} for self-affine attractors in
$\rb^{n}$. Then expansive polynomials were associated canonically with
Hurwitz polynomials by Burcsi \cite{burcsi} to
obtain an exhaustive classification of them and to describe their properties. 
The method of coding Hurwitz polynomials
by finite sets of positive rational integers in continued fractions
(Henrici \cite{henrici}, chapter 12) is transported
to expansive polynomials. In the present note, we continue further
this coding towards Salem numbers using Theorem A.

In section \ref{S2} we recall 
the two related subclasses 
$A_q$ and $B_q$
of Salem numbers 
which arise from the interlacing Theorems A and B 
(\cite{bertinboyd}, \cite{bertinpathiauxdelefosse} p. 129 and 133).
Since the set T decomposes as
\begin{equation}
\label{taqbq}
{\rm T} = \bigcup_{q \geq 2} A_{q} = \bigcup_{q \in \nb} B_{q}
\end{equation}
investigating the limit points of T only as limit points of
Salem numbers in the subclasses $A_q$, by the coding by 
continued fractions as presently, and their deformations, 
does not result in a loss of generality.

\vspace{1cm}

\textbf{Notations : }

\begin{definition}
A Salem polynomial is a monic polynomial with integer coefficients 
having exactly one zero (of multiplicity 1) outside the unit circle, and at least one zero 
lying on the unit circle.  
\end{definition}

A Salem polynomial is not necessarily irreducible. If it vanishes at $\theta > 1$, 
and it is reducible, then, by Kronecker's Theorem \cite{kronecker}, 
it is the product of cyclotomic polynomials by the minimal polynomial of $\theta$.
Let $\theta$ be a Salem number. 
The minimal polynomial $T(z)$ of $\theta$ has an even degree $2n$, $n \geq 2$, with simple roots. 
$T(z)$ has exactly one zero $\theta$ of modulus $>1$, 
one zero $\frac{1}{\theta}$ of modulus $<1$ and $2n-2$ zeros on the unit circle, 
as pairs $(\alpha_j, \overline{\alpha_j})$ of complex-conjugates.
The notation `$T$' for Salem polynomials is the same as for the set of Salem
numbers, since it presents no ambiguity in the context.

\begin{definition}
An {\it expansive polynomial} is a polynomial with coefficients in a real subfield
of $\mathbb{C}$, of degree 
$ \geq 1$, such that all its roots in $\mathbb{C}$ 
have a modulus strictly greater than 1.
\end{definition}
An expansive polynomial is not necessarily monic. 

\begin{definition}
Let $P(z)$ be a polynomial $\in \zb[z]$, and  $n=\deg(P)$. 
The {\it reciprocal polynomial} of $P(z)$ is $P^*(z)=z^n P(\frac{1}{z})$.
A polynomial $P$ is a reciprocal polynomial if $P^*(z)=P(z)$.
A polynomial $P$ is an {\it antireciprocal polynomial} if $P^*(z)=-P(z)$ .
\end{definition}

\begin{definition}
If $P(X) = a_0 \prod_{j=1}^{n} (X- \alpha_j)$ is a polynomial
of degree $n \geq 1$ with coefficients in $\cb$, 
and roots $\alpha_j$, the {\it Mahler measure} of $P$ is
$${\rm M}(P) := |a_0| \prod_{j=1}^{n} \max\{1, |\alpha_j|\}. 
$$
\end{definition}

\begin{definition}
A {\it negative Salem number} is an algebraic integer $\theta < -1$ 
such that the Galois conjugates
$\theta^{(i)}$ of $\theta$ satisfy: $|\theta^{(i)}| \leq 1$ with at least one conjugate
of modulus 1.
\end{definition}

In the case where expansive polynomials are irreducible, 
the following definition extends the classical one of Garsia \cite{brunotte0}
\cite{brunotte1} \cite{harepanju}.

\begin{definition}
A {\it generalized Garsia number} is an algebraic integer for which the minimal polynomial
is a monic (irreducible) expansive polynomial with absolute value of the 
constant term greater than or equal to $2$. A generalized Garsia polynomial $P$
is a monic irreducible expansive polynomial with integer coefficients such that
${\rm M}(P) \geq 2$. A Garsia number is a generalized Garsia number of 
Mahler measure equal to $2$. 
\end{definition}

\begin{definition}
The $n$th cyclotomic polynomial, with integer coefficients, 
is denoted by $\Phi_{n}(X), n \geq 1$, with
$\Phi_{1}(X) = X-1, \Phi_{2}(X) = X+1$ and $\deg(\Phi_{n}) = \varphi(n)$ 
even as soon as
$n > 2$. The degree $\varphi(n)=n \prod_{p \, {\rm prime}, \, p|n}(1-1/p)$ of $\Phi_{n}(X)$ is the Euler's 
totient function.
In the sequel, following Boyd \cite{boyd00}, 
we adopt the (non-standard) convention that 
`cyclotomic polynomial' means a monic integer polynomial having
all its roots on the unit circle, i.e. a product of 
$n$th cyclotomic polynomials for various values of $n$.
\end{definition}

%----------------------------------------------------------------------------------------------
\section{Bertin-Boyd Interlacing Theorem A}
\label{S2}

%----------------------------------------------------------------------------------------------
\subsection{The $Q$-construction of a reciprocal (or an anti-reciprocal) polynomial from a polynomial $P$ by an algebraic function}
\label{sec:SS1S2}

Let $\mathbb{K}$ be a subfield of 
$\mathbb{R}$, $P(X) \in \mathbb{K}[X], \deg(P) = n \geq 1$, and
$z$ the complex variable. 
With $ \epsilon = \pm 1 $, the polynomial defined by 
\begin{equation}
\label{defQ}
 Q(z)=z P(z)+\epsilon P^*(z)
\end{equation}
satisfies $Q^*(z)=\epsilon \, Q(z)$. 
It is either a reciprocal (if $\epsilon=+1$), or an anti-reciprocal (if $\epsilon=-1$) polynomial.
The algebrasc function obtained by the
related polynomial :
\begin{equation}
  Q(z,t)=z P(z)+ \epsilon t P^* (z)
\end{equation}
defines
an affine algebraic curve over $\cb$ (first considered by Boyd \cite{boyd0}):
\begin{equation}
\{(z,t) \in \cb^2 \mid Q(z,t) = 0\}.
\end{equation}
For $0 \leq t \leq 1$ 
the equation $Q(z,t)=0$ over $\cb$ defines an algebraic curve $z=Z(t)$ with $n+1$ branches. 
$Z(0)$ is the set of the zeros of $z P(z)$ and $Z(1)$ is the set of the zeros of $Q(z)$.

By $Q$-construction from $P$, over $\mathbb{K}$,
we mean the couple $(Q(z), Q(z,t))$ given by the 
reciprocal or anti-reciprocal polynomial $Q$ and its associated algebraic function
$Q(z,t)$, both having specific properties arising from those of $P$ and the sign of
$\epsilon$, as described below.

On $|z|=1$, let us remark that $|P(z)|=|P^*(z)|$. 
Then $Q(z,t)$ has no zeros on $ |z|=1$ for $0 \leq t < 1$. 
Each branch of the algebraic curve $z=Z(t)$ is 
\begin{itemize}
 \item[(i)] either included in $|z| \leq 1$; when it starts from a zero of $z P(z)$ in $|z|<1$,
 \item[(ii)] or included in $|z| \geq 1$; when it starts from a zero of $z P(z)$ in $|z|>1$. 
\end{itemize}
\noindent
Then a zero of $Q(z)$ on the unit disc $|z|=1$ is 
\begin{itemize}
 \item[(i)] either stemming from a branch included in the unit disc; then it is called an {\em exit},
 \item[(ii)] or stemming from a branch outside the unit disc; then it is called an {\em entrance}.
\end{itemize}

The example of the Lehmer polynomial and the smallest known Salem number, Lehmer's number, is given in Figure \ref{branches}.

\begin{figure}[ht]
 \centering
 \includegraphics[width=11cm]{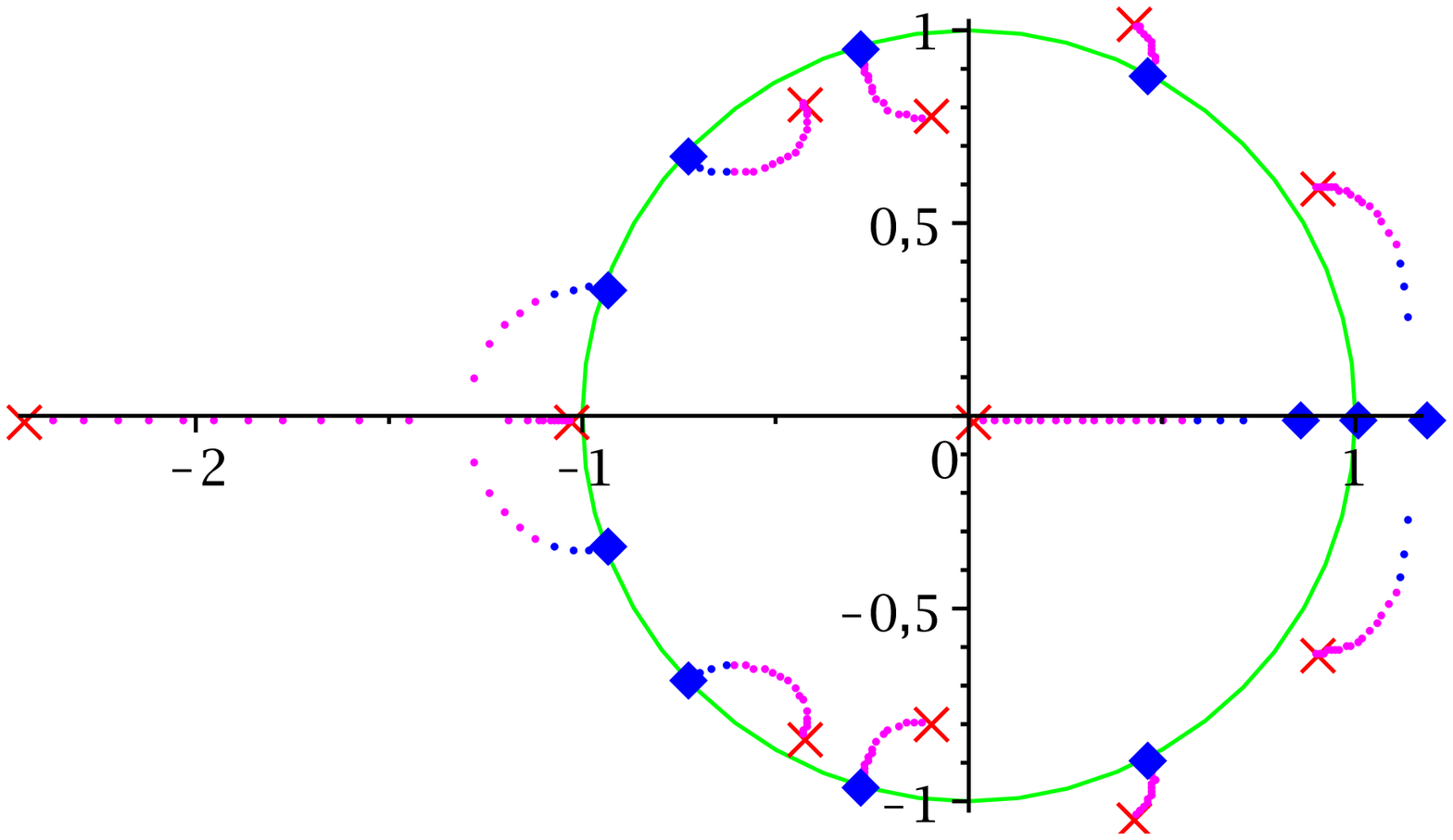}
 \caption{
Branches of the algebraic curve obtained with the monic nonexpansive
polynomial $P(z)=z^{10}+2z^9+3z^7+z^6+2z^5+2z^4+z^3+4z^2+z+2$ (crosses), 
producing by the Q-construction the anti-reciprocal polynomial $Q(z)=(z-1)(z^{10}+z^9-z^7-z^6-z^5-z^4-z^3+z+1)$ (diamonds), 
which has 5 entrances, 4 exits and 1 zero of modulus $>1$, the Lehmer number :
$\theta \approx 1.17628... $.}
\label{branches}
\end{figure}

%-----------------------------------------------------------------------------------------------
\subsection{Expansive polynomials of Mahler measure $q$. Classes $A_q$ }
\label{sec:SS2S2}

In the particular case where $\mathbb{K}= \qb$ and $P(X)$ is monic and expansive
with integer coefficients, $z P(z)$ has one zero in the open unit disc 
and $n$ zeros outside the closed unit disc. 
The algebraic curve $z=Z(t)$ has at most one exit and $n$ entrances.
Therefore $Q(z)$ has at most one zero inside $|z|<1$.

Since $Q(z)$ is a reciprocal polynomial, if $Q(z)$ has no zero in $|z|<1$, 
then all his zeros are on the unit circle.
And if $Q(z)$ has exactly one zero $\alpha$ in $|z|<1$, it has exactly 
one zero $\theta$ in $|z|>1$ and $n-2$ zeros on the unit circle. 
Then, if $n \geq 4$, $\theta = \frac{1}{\alpha}$ is a Salem number
and $Q$ is a Salem polynomial.
We say that $\theta$ is produced by the Q-construction from $P$.

\begin{definition}
\label{classAq}
Let $q \in \nb^*$ be a nonzero integer. 
The class ${A_q}$ is the set of Salem numbers produced by the Q-construction
(over $\mathbb{K}=\qb$) from monic expansive 
polynomials $P(X) \in \zb[X]$ having a constant term equal to $\pm q$ and
$\epsilon=-{\rm sgn}P(0)$.
\end{definition}
 
\begin{remark}
\label{qdessus2}
The sets $A_0$ and $A_1$ are empty since all the zeros $\alpha_i$ of any
monic expansive polynomial $P$ are in $|z|>1$, so that we have  $q=| \prod \alpha_i| >1$.
\end{remark}

\begin{definition}
Let $q$ be an integer $\geq 2$. 
The set of monic expansive polynomials $P(z) \in \zb[z]$ 
such that $|P(0)|= {\rm M}(P) = q$ producing a Salem number by the Q-construction 
$ Q(z) = z P(z) + \epsilon P^{*}(z) $ with $\epsilon \in \{ -1 ; +1 \} $, 
is denoted by $E_q$.
\end{definition}

\begin{theorem}[Bertin, Boyd \cite{bertinboyd}, Theorem A]
\label{theoremA}
Suppose that $\theta$ is a Salem number with minimal polynomial $T$. 
Let $q \in  \nb \setminus \{0,1\}$.
Then $\theta$ is in $A_q$ if and only if there is a cyclotomic polynomial $K$ with simple roots 
and $K(1) \neq 0$ and a reciprocal polynomial $L(X) \in \zb[X]$ 
with the following properties :
   \begin{enumerate}[(a)]
    \item $L(0)=q-1$,
    \item $\deg(L) = \deg(KT)-1$,
    \item $L(1) \geq -K(1)T(1)$,
    \item $L$ has all its zeros on $|z|=1$ and they interlace the zeros of $KT$ on $|z|=1$ 
    in the following sense : let $e^{i \psi _1}, ... e^{i \psi _m}$ the zeros of $L$ on 
     $\{{\rm Im} z  \geq 0\} \setminus \{z=-1\}$ with $0 < \psi_1 < ... < \psi_m < \pi$ 
    and let $e^{i \phi _1}, ... e^{i \phi _m}$ the zeros of $KT$ on 
    $\{{\rm Im} z  \geq 0\} $ with $0 < \phi_1 < ... < \phi_m \leq \pi$  , 
    then $0 < \psi_1 < \phi_1 < ... < \psi_m < \phi_m $.
   \end{enumerate}
\end{theorem}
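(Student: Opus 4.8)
The plan is to prove the two implications by exhibiting an explicit dictionary between the Q-construction datum $(P,\epsilon)$ and the interlacing datum $(K,L)$. The central object on both sides is the behaviour on $|z|=1$ of the unimodular function $g(z):=P^{*}(z)/P(z)$: since $|P(z)|=|P^{*}(z)|$ there (as recalled in \S\ref{sec:SS1S2}) and $P$ is expansive, $g$ is a finite Blaschke product of degree $n=\deg P$, so $\arg g(e^{i\phi})$ is \emph{strictly monotonic} and increases by $2\pi n$ over one full turn. Writing $Q(z)=zP(z)+\epsilon P^{*}(z)=P(z)\bigl(z+\epsilon g(z)\bigr)$, the zeros of $Q$ on the circle are the phase-matching solutions of $z=-\epsilon g(z)$, while the zeros of the companion $P(z)\mp P^{*}(z)=P(z)\bigl(1\mp g(z)\bigr)$ are the solutions of $g(z)=\pm1$; because $g$ winds $n$ times, each of $g=\pm1$ has exactly $n$ circle solutions, so $P\mp P^{*}$ has \emph{all} its zeros on $|z|=1$, and monotonicity of $\arg g$ forces the two finite sets to \emph{strictly interlace}. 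This is the analytic engine behind condition (d).

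For the direct implication, assume $\theta\in A_{q}$, so there is a monic expansive $P\in\zb[z]$ with $|P(0)|=q$ and $\epsilon=-\operatorname{sgn}P(0)$ for which $Q=zP+\epsilon P^{*}$ is a Salem polynomial vanishing at $\theta$. By Kronecker's theorem I would factor $Q=(z-\sigma)\,K(z)\,T(z)$, where $T$ is the minimal polynomial of $\theta$, the factor $z-\sigma$ ($\sigma\in\{\pm1\}$) is the circle root forced by the (anti)reciprocity of $Q$ at $z=\sigma$, and $K$ is the cyclotomic polynomial carrying the remaining roots of unity; having removed the forced factor, $K$ has simple roots and $K(1)\neq0$, which is the required $K$. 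The companion $L$ is the reciprocal polynomial whose circle-zeros are the solutions of $g=\pm1$ above, normalized so that $L(0)=q-1$; concretely, for $\epsilon=-1$ one may take $L=(P^{*}-P)/(z-1)$ (the numerator is antireciprocal, hence divisible by $z-1$, with reciprocal quotient), the case $\epsilon=+1$ being symmetric. I would then verify (a) by reading $L(0)=q-1$ off the constant and leading coefficients of $P\mp P^{*}$ (using $|P(0)|=q$); (b) by the degree count $\deg L=n-1=\deg(KT)-1$; (c) by evaluating the defining identity at $z=1$, where the removable factor produces $L(1)$ as a derivative value and the inequality expresses that exactly one branch of the curve exits the disc to create $\theta$; and (d) by the strict interlacing of the phase-matching sets, together with the count of zeros of $Q$ inside, on, and outside $|z|=1$ coming from the algebraic curve $Q(z,t)$ of \S\ref{sec:SS1S2}.

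For the converse, assume $(K,L)$ with (a)--(d) are given and set $Q:=(z-\sigma)KT$. The conditions $P^{*}-P=(z-1)L$ (with the sign fixed by $\epsilon$) and $zP+\epsilon P^{*}=Q$ form a linear system in $P,P^{*}$; eliminating $P^{*}$ gives $P=\bigl(Q-\epsilon(z-1)L\bigr)/(z+\epsilon)$. I would show that $z+\epsilon$ divides the numerator (here the parity of $\sigma$ and the threshold (c) enter) and that the resulting $P$ is monic, has integer coefficients, and satisfies $|P(0)|=q$, directly from (a) and (b). It then remains to prove that $P$ is \emph{expansive}. For this I would apply the argument principle to $g=P^{*}/P$ on $|z|=1$ and show that strict interlacing (d) forces the winding number of $g$ to equal $n$; since that winding number is $(\#\text{zeros of }P^{*}\text{ inside})-(\#\text{zeros of }P\text{ inside})$, and each count is at most $n$ and at least $0$, this pins the values at $n$ and $0$, i.e. $P$ has no zero in $|z|\le1$. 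Once $P$ is expansive with $|P(0)|=q$ and $\epsilon=-\operatorname{sgn}P(0)$, the Q-construction applied to $P$ returns $Q$ and hence $\theta$, placing $\theta\in A_{q}$.

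The main obstacle is the expansivity step of the converse: translating the purely positional data (d), reinforced by the boundary inequality (c), into the \emph{exact} winding number $n$ of $g$, so that the argument principle yields no zeros of $P$ in the closed disc. The strictness of the interlacing and the precise threshold in (c) are what prevent a branch of $Q(z,t)$ from stalling on $|z|=1$, and the careful treatment of the boundary points (the possible zeros of $KT$ at $z=-1$, which are excluded from the zero set of $L$) is the delicate part of that count.
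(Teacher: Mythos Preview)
The paper does not supply its own proof of Theorem~\ref{theoremA}: the result is quoted from Bertin--Boyd \cite{bertinboyd}, and only the forward dictionary is recorded (with a pointer to \cite{bertinpathiauxdelefosse}, pp.~129--133). That dictionary normalizes both sign cases to the single equation $(z-1)Q_1 = zP_1 - P_1^{*}$ by setting $P_1=P$, $Q_1=Q/(z-1)$ when $\epsilon=-1$, and $P_1=(z-1)P$, $Q_1=Q$ when $\epsilon=+1$; then $KT=Q_1$ and $L=P_1-Q_1$. For $\epsilon=-1$ your choice $L=(P^{*}-P)/(z-1)$ coincides exactly with $P_1-Q_1$, so you have recovered the intended object, and your Blaschke-product/monotone-argument explanation of the interlacing in (d), together with the winding-number route to expansivity in the converse, is the standard mechanism behind the original proof.

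One point to correct: the $\epsilon=+1$ case is \emph{not} symmetric in the way your displayed formulae suggest. With the paper's normalization one gets $L=P_1-Q_1=(z-1)P-Q=-(P+P^{*})$, a reciprocal polynomial of degree $n$ with $L(0)=-(p_0+1)=q-1$ (since $p_0=-q$), whereas your uniform prescription ``$P^{*}-P=(z-1)L$ with the sign fixed by $\epsilon$'' would give $L(0)=p_0-1=-q-1$, violating (a). Correspondingly, the reconstruction $P=(Q-\epsilon(z-1)L)/(z+\epsilon)$ is right for $\epsilon=-1$ but not for $\epsilon=+1$; there one simply has $P=(Q+L)/(z-1)$, from $Q+L=zP+P^{*}-(P+P^{*})=(z-1)P$. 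This does not undermine your overall strategy, but the two sign cases must be written out separately (or unified through $P_1,Q_1$ as the paper does) rather than by a single $\pm$.
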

 
\noindent
The construction 
of the polynomials $L$ and $KT$ is explicit 
(\cite{bertinpathiauxdelefosse}, pp 129--133), as follows :
\begin{itemize}
 \item if $\epsilon=-1$ then $Q(1)=0$. Then $P_1$ and $Q_1$ are chosen as $P_1(z)=P(z)$ and $\displaystyle Q_1(z)=\frac{Q(z)}{z-1}$,
 \item else if $\epsilon=+1$, then $P_1$ and $Q_1$ are chosen as $P_1(z)=(z-1)P(z)$ and $Q_1(z)=Q(z)$.
\end{itemize}
In both cases, the polynomial $Q_1(z)$ is a reciprocal polynomial which satisfies the equation:
\begin{equation}
\label{EqDeQ1}
 (z-1)Q_1(z)=zP_1(z)-P_1 ^*(z).
\end{equation}
We say that $P_1$ lies ``over $Q_1$". 
As $Q_1(\theta)=0$, $Q_1$ is the product of the minimal polynomial of $\theta$ by a product of cyclotomic polynomials
as 
\begin{equation}
  K(z)T(z)=Q_1(z),
\end{equation}
and the polynomial $L$, reciprocal by construction, is given by
\begin{equation}
\label{elP1Q1}
 L(z)=P_1(z)-Q_1(z).
\end{equation}

In \S \ref{sec:SS4.1S2}, \S \ref{sec:SS4.2S2} and \S \ref{sec:SS4.3S2} 
we establish existence theorems for the polynomials
$P_1$ associated to a given Salem polynomial $Q_1$ by the equation \eqref{EqDeQ1},  
focusing on the case $\epsilon = -1$, that is with $P(0)=P_1(0) > 0$
and $\deg(P)$ even.
The methods of Geometry of Numbers used call for nondegenerated polyhedral cones
in Euclidean spaces of dimension half the degree of the Salem polynomial. 
These theorems are called {\it association theorems}. In \S \ref{sec:SS4.4S2} 
the second case of association theorem, with $\epsilon = +1$, is
briefly shown to call for similar methods, after a suitable factorization of
\eqref{EqDeQ1} and sign changes.
The algorithmic search
of an expansive polynomial over a Salem polynomial is considered in
\S \ref{sec:SS4.5S2} from a practical viewpoint.

The example of the interlacing of roots of $L$ and $KT$ associated to the Lehmer number is given in Figure \ref{LKT}.

\begin{figure}[ht]
 \centering
 \includegraphics[width=11cm]{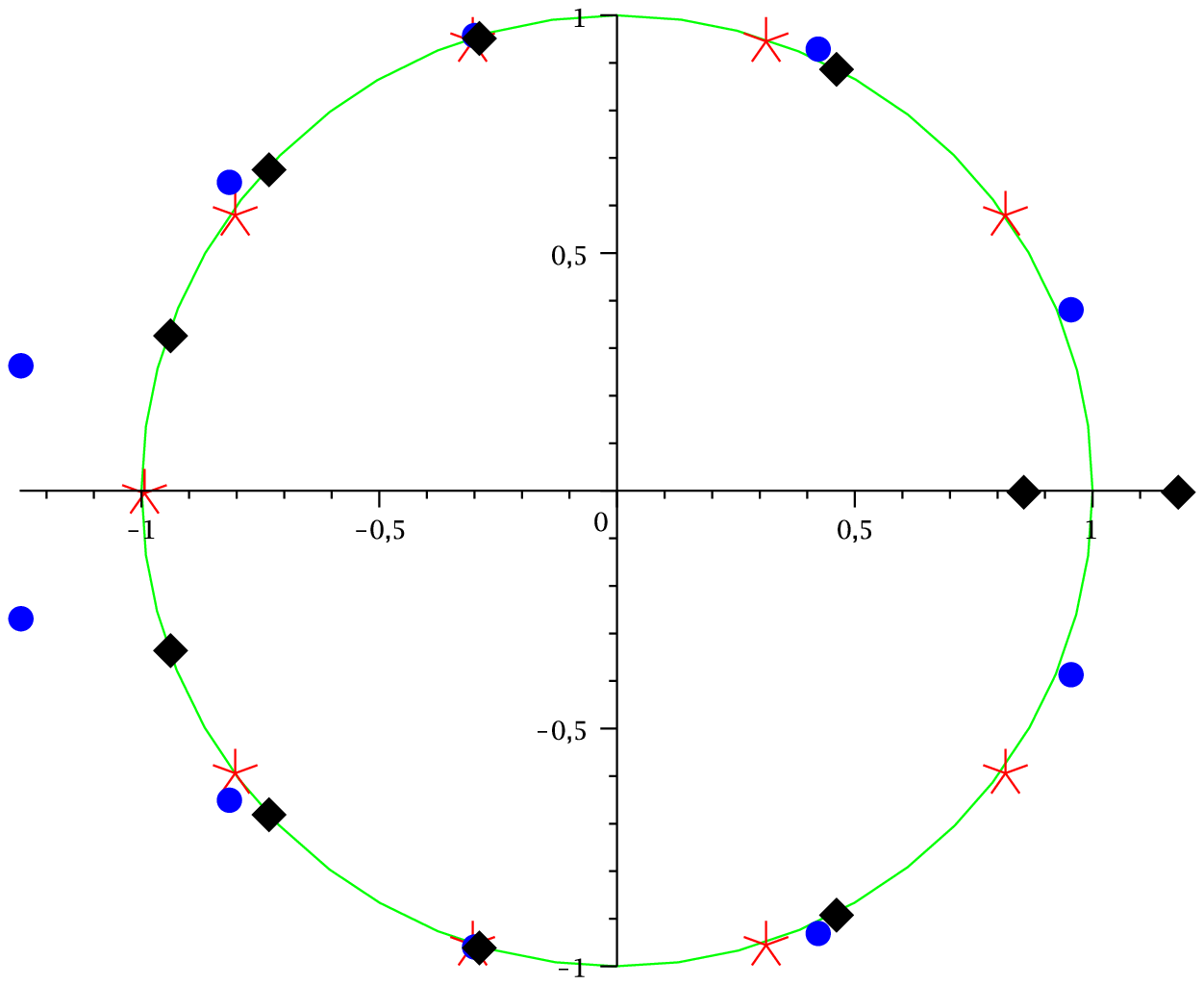}
 \caption{Interlacing of the zeros of $L = P-T$ (asterisks) and those of $KT = T$ (diamonds) on 
the unit circle obtained with the monic expansive polynomial 
 $P(z)=z^{10}+2z^9+z^8-z^7-z^6-z^4-z^3+2z+2$ (circles), 
producing by the Q-construction the anti-reciprocal Salem polynomial 
$Q(z)=(z-1)(z^{10}+z^9-z^7-z^6-z^5-z^4-z^3+z+1) = (z-1)T(z)$  with
the Lehmer number $\theta \approx 1.176\ldots $ as dominant root of $T$.}
 \label{LKT}
\end{figure}

In \S \ref{sec:SS3S2} the type of interlacing provided by Theorem \ref{theoremA}
is revisited in the more general context of interlacing modes
on the unit circle proposed by McKee and Smyth \cite{mckeesmyth}.

The classes $(B_q)$ of Salem numbers
are defined by a similar construction with a polynomial P which has a single zero in $|z|>1$.
We refer to \cite{bertinboyd} (\cite{bertinpathiauxdelefosse}, p. 133) 
for their definition. All the Salem numbers are generated by the classes $A_q$ and $B_q$,
giving rise to \eqref{taqbq}. The distribution of the small
Salem numbers in the classes $A_q$ in intervals was studied by Boyd \cite{boyd0}
and Bertin and Pathiaux-Delefosse \cite{bertinpathiauxdelefosse}. 
Bertin and Boyd \cite{bertinboyd} proved that
for $q \geq 2$ and $k \geq 1$ , $A_2 \subset A_q$ and  $A_q \subset A_{kq-k+1}$.
The distribution in the other classes remains obscure.

\begin{conjecture} [Local Density Conjecture]
\label{ldCCJJ}
For all $c>1$, there exists $M > 0$ such that $T \cap [ 1, c ]$ is contained in a finite union
$\bigcup_{2 \leq q \leq M} A_q$.
\end{conjecture}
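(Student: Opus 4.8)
The plan is to recast the Local Density Conjecture, via Theorem~\ref{theoremA} and the coding set up in Theorems~\ref{main1} and~\ref{main2}, as a single quantitative statement: a bound on a Mahler measure that is \emph{uniform in the degree} of the Salem polynomial.

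\textbf{Reformulation.} By Definition~\ref{classAq} (and Theorem~\ref{theoremA} with the explicit construction around \eqref{EqDeQ1}), a Salem number $\beta$ lies in $A_{q}$ if and only if there is a monic integer expansive polynomial $P$ with $|P(0)|=q$ producing $\beta$ by the $Q$-construction --- equivalently, with $(z-1)K(z)T_{\beta}(z)=zP(z)-P^{*}(z)$ for some cyclotomic $K$ with simple roots and $K(1)\neq 0$ --- and, every root of such a $P$ having modulus $>1$, $q=|P(0)|={\rm M}(P)$. Write $q_{\min}(\beta):=\min\{\,q\ge 2:\ \beta\in A_{q}\,\}$, the least Mahler measure of a monic integer expansive polynomial producing $\beta$. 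Since for each fixed $M$ the family $\{A_{q}:2\le q\le M\}$ is finite, the conjecture is equivalent to
\[
\forall\,c>1:\qquad \sup\bigl\{\,q_{\min}(\beta)\ :\ \beta\in{\rm T},\ \beta\le c\,\bigr\}\ <\ \infty .
\]
Theorem~\ref{main1}, applied with $K=1$ to $T=T_{\beta}$ (which has simple roots, none in $\{\pm1\}$, and even degree $m=\deg T_{\beta}\ge 4$), already gives $q_{\min}(\beta)<\infty$ for every Salem number $\beta$; the whole difficulty is to bound it in terms of $c$ alone, not of $m$.

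\textbf{Strategy.} Fix $c>1$, a Salem number $\beta\le c$, and put $T=T_{\beta}$, $m=\deg T$. In the case $K=1$, write $P(z)=\sum_{j=0}^{m}a_{j}z^{j}$ with $a_{m}=1$. Equation \eqref{main1eq} couples $a_{k-1}$ with $a_{m-k}$ by one linear relation for each $k=1,\dots,m/2$, so the coefficient vectors of the members of $\mathcal{P}_{T}$ are exactly the integer points --- lying in the open expansive cone of $\R^{m}$ --- of an affine subspace of dimension $m/2$, with $a_{0}=P(0)=\pm{\rm M}(P)$ among the $m/2$ free parameters; this lattice set is infinite by \S\ref{sec:SS4.2S2}--\S\ref{sec:SS4.3S2}. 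Under the semigroup law \eqref{loiPP} one has $(P\oplus P')(0)=P(0)+P'(0)-1$, so ${\rm M}$ only grows along $\oplus$; hence the small values of $|P(0)|$ are attained at the $\oplus$-atoms of $\mathcal{P}_{T}^{+}\cup\{T\}$ (equivalently, by Theorem~\ref{main2}(iii), at the atoms of $\mathcal{F}_{T}^{+}\cup\{0\}$), and it suffices to exhibit \emph{one} such atom with ${\rm M}(P)\le M(c)$. I would try to do this in two complementary ways, enlarging the cyclotomic factor $K$ (hence the ambient degree) when extra room is needed: (a) a geometry-of-numbers argument showing that the set of vectors which are simultaneously on the flat \eqref{main1eq}, expansive, and of constant term bounded by a quantity depending only on $c$ contains a point of the coefficient lattice --- the crux being a lower bound, uniform in $m$, on the ``width'' of that set in the $a_{0}$-direction; and (b) an explicit construction placing all roots of $P$ just outside $|z|=1$ by small integer corrections, guided by the branch picture of the curve $Q(z,t)=zP(z,t)+\epsilon\,tP^{*}(z,t)$ (cf.\ Figure~\ref{branches}) and the interlacing inequality $L(1)\ge -K(1)T(1)$ of Theorem~\ref{theoremA}, while keeping ${\rm M}(P)=|P(0)|$ controlled. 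The classes $B_{q}$ and the identity \eqref{taqbq} would furnish a second line of attack.

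\textbf{Main obstacle.} The hard part is precisely the degree-uniformity demanded in (a) and (b). The width of the expansive slice in the $a_{0}$-direction is governed by how tightly the conjugates of $\beta$ cluster on the unit circle, and for Salem numbers near $1$ this is the very phenomenon underlying Lehmer's problem (cf.\ Lemma~\ref{intervals_nonexistence}); a bound on $q_{\min}$ depending on $c$ alone therefore appears to need an arithmetic input beyond the purely geometric estimates --- for instance a structural result forcing ${\rm T}\cap(1,c]\subset A_{2}$, or a Lehmer-type lower bound on ${\rm M}(T_{\beta})=\beta$ of the right shape. Without such an input I expect only the weaker, degree-sensitive bounds that fall out of the proof of Theorem~\ref{main1} --- a bound on $q_{\min}(\beta)$ that deteriorates as $\deg T_{\beta}\to\infty$ --- which do not suffice; promoting these to a bound depending on $c$ alone is the heart of the matter, and the step I expect to resist.
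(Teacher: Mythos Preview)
The statement you are addressing is a \emph{conjecture}, not a theorem: the paper states the Local Density Conjecture without proof and offers no argument for it anywhere in the text. There is therefore no ``paper's own proof'' to compare your proposal against.

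Your write-up is not really a proof either, and you are candid about this: after a sensible reformulation (reducing the conjecture to a uniform-in-degree bound on $q_{\min}(\beta)$ for $\beta\le c$) and two plausible lines of attack, you correctly identify the obstruction --- that any width estimate on the expansive slice in the $a_0$-direction coming from the geometry of Theorem~\ref{main1} degrades with $\deg T_\beta$, so one only gets degree-sensitive bounds, not the uniform bound required. This diagnosis matches the status of the problem in the paper: Theorem~\ref{main1} and Proposition~\ref{Pexistsprop} give, for each individual Salem polynomial $T$, an explicit $q_b$ with $\beta\in A_{q_b}$, but $q_b$ depends on $m$ and on the root geometry of $T$ through the matrix $A$ in \eqref{bconstant}, and the paper makes no claim that this can be made uniform in $m$. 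Your remark that removing the degree dependence would require an arithmetic input of Lehmer type is exactly the point; the paper's own Lemma~\ref{intervals_nonexistence} underlines the same link. In short: your reformulation and strategy are reasonable, your identification of the gap is accurate, and the gap is genuine --- the conjecture remains open in the paper.
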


%----------------------------------------------------------------------------------------------
\subsection{Interlacing on the unit circle and McKee-Smyth interlacing quotients}
\label{sec:SS3S2}

Following McKee and Smyth \cite{mckeesmyth} \cite{mckeesmyth1}
three types of interlacing conditions, CC, CS and SS, are relevant.

\begin{definition}
 Suppose that $C_1$ and $C_2$ are coprime polynomials with integer coefficients and positive leading coefficients.
We say that $C_1$ and $C_2$ satisfy the {\it \textbf{CC-interlacing condition}} (CC for Cyclotomic-Cyclotomic) 
or  $C_1/C_2$ is a CC-interlacing quotient if
  \begin{itemize}
   \item $C_1$ and $C_2$ have all their roots in the unit circle.
   \item all the roots of $C_1$ and $C_2$ are simple,
   \item the roots of $C_1$ and $C_2$ interlace on $|z|=1$.
  \end{itemize}
\end{definition}

\noindent
{\bf Remarks.}
\begin{itemize}
\item[(i)] As $C_1$ and $C_2$ have the same number of roots, 
$C_1$ and $C_2$ have the same degree; 
\item[(ii)] as the non-real zeros of $C_1$ and $C_2$ are conjugated in complex sense two by two, the reals $-1$ and $+1$ must be in the set of their roots to ensure 
the interlacing on the unit circle; 
\item[(iii)] one polynomial among $C_1$ and $C_2$ is a reciprocal polynomial, the other being an anti-reciprocal polynomial, having $(z-1)$ in its factorization;
\item[(iv)] the terminology $CC$ for ``Cyclotomic-Cyclotomic" is misleading. Indeed, 
$C_1$ and $C_2$ are not necessarily monic, so they are not necessarily cyclotomic 
polynomials.
\end{itemize}

 A complete classification of all pairs of cyclotomic polynomials whose zeros interlace on the unit circle is reported in \cite{mckeesmyth1}.

\begin{definition}
\label{CSentrecroisement} 
Suppose that $C$ and $S$ are coprime polynomials with integer coefficients and positive 
{\it leading} coefficients.
We say that $C$ and $S$ satisfy the {\it \textbf{CS-interlacing condition}} (CS for Cyclotomic-Salem) 
or  $C/S$ is a CS-interlacing quotient if
  \begin{itemize}
   \item $S$ is reciprocal and $C$ is antireciprocal,
   \item $C$ and $S$ have the same degree,
   \item all the roots of $C$ and $S$ are simple, except perhaps at $z=1$,
   \item $z^2-1 \mid C$,
   \item $C$ has all its roots in $|z|=1$,
   \item $S$ has all but two roots in $|z|=1$, with these two being real, positive, $\neq 1$,
   \item the roots of $C$ and $S$ interlace on $\{|z|=1\} \setminus \{1\}$.
  \end{itemize} 
\end{definition}

\begin{definition} 
Suppose that $S_1$ and $S_2$ are coprime polynomials with integer coefficients 
and positive {\it leading} coefficients.
We say that $S_1$ and $S_2$ satisfy the {\it \textbf{SS-interlacing condition}} (SS for Salem-Salem) 
or  $S_2/S_1$ is a SS-interlacing quotient if
  \begin{itemize}
   \item one of $S_1$ and $S_2$ is reciprocal polynomial, the other is an anti-reciprocal polynomial,
   \item $S_1$ and $S_2$ have the same degree,
   \item all the roots of $S_1$ and $S_2$ are simple, 
   \item $S_1$ and $S_2$ have all but two roots in $|z|=1$, with these two being real, positive, $\neq 1$,
   \item the roots of $S_1$ and $S_2$ interlace on $\{|z|=1\} \setminus \{1\}$.
  \end{itemize} 
\end{definition}

\begin{remark}
There are two types of SS-interlacing condition : if the largest real roots of $ S_1 S_2 $ is a root of $S_1$, 
then $S_2 / S_1$ is called a 1-SS-interlacing quotient, and $S_1/S_2$ is called a 2-SS-interlacing quotient.
\end{remark}

Theorem \ref{theoremA} provides interlacing on the unit circle; more precisely,
referring to \eqref{EqDeQ1} and denoting $n :=\deg(Q_1)=\deg(P_1)$, 
let us show that if $n$ is
\begin{itemize}
\item[(i)] even, the quotient $(z-1)L/KT$ is a CS-interlacing quotient, 
\item[(ii)] odd, no CS-interlacing condition is satisfied.
\end{itemize}

Indeed, if $n$ is even, from \eqref{elP1Q1}, with $L=P_1 - Q_1$,
$$L(-1) = P_1(-1)-\frac{-P_1(-1)-(-1)^n P_1(\frac{1}{-1})}{(-1-1)} 
= P_1(-1)-\frac{-2P(-1)}{-2} = 0.$$
Then the factor $(z+1)$ divides $L$ and we can take $C= (z-1)L $ and $S = KT$.   
As $L$ and $KT$ are reciprocal polynomials, $(z-1)L$ is an anti-reciprocal polynomial.
Moreover, $\deg(L)=\deg(KT)-1$, so $(z-1)L$ and $KT$ have the same degree.
Finally, by definition, $(z^2-1) | C$.
The roots of $C$ and $S$ are simple and all but two roots of 
$S$ interlace the roots of $C$  on $\{ |z|=1 \} \setminus \{ z=1 \} $, 
S having two real roots being positive inverse and $\neq 1$.

On the contrary, if $n$ is odd, 
$$Q_1(-1) = \frac{-P_1(-1)-(-1)^n P_1(\frac{1}{-1})}{(-1-1)} = 0.$$ 
Then the factor $(z+1)$ divides the Salem polynomial $Q_1=KT$;
-1 cannot be a zero of L.
The item $(z^2 - 1) \mid C$  in the CS-interlacing condition is then missing. 

Let us turn to the Salem numbers associated with by these interlacing modes.
McKee and Smyth \cite{mckeesmyth} use the variant $z \mapsto x=\sqrt{z}+1/\sqrt{z}$
of the Tchebyshev transformation, instead of the more usual one
$z  \mapsto x = z + 1/z$, to study the Salem numbers produced by the above different cases
of interlacing quotients. 

\begin{theorem}[\cite{mckeesmyth}, Theorem 3.1]
Let $C_2 /C_1$ be a CC-interlacing quotient with $C_1$ monic, of degree $\geq 4$. 
By the map $x=\sqrt{z} + \frac{1}{\sqrt{z}}$
the function $ \displaystyle \frac{\sqrt{z} C_2(z) }{(z-1) C_1(z)}$ is transformed 
into the real interlacing quotient $\displaystyle \frac{c_2(x)}{c_1(x)}$ 
with $c_1$ and $c_2$ coprime polynomials in $\zb \left[ x \right] $. 
If  $\displaystyle \lim _{x -> 2 ^+} \frac{c_2(x)}{c_1(x)} > 2$ then the solutions of the equation 
\begin{equation}
\label{NewQuotient}
 \frac{C_2(z)}{(z-1) C_1(z)}=1 + \frac{1}{z}
\end{equation}
are a Salem number, its conjugates and roots of unity.
\end{theorem}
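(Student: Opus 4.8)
The plan is to pull the whole problem back, through the substitution $x=\sqrt z+1/\sqrt z$, from the unit circle in the $z$-plane onto the real segment $[-2,2]$: on $|z|=1$ this substitution is the two-to-one map $e^{i\vartheta}\mapsto 2\cos(\vartheta/2)$, it converts ``interlacing on $|z|=1$'' into ordinary interlacing of real roots in $(-2,2)$, and it converts \eqref{NewQuotient} into a transparent fixed-point equation on the real line. Concretely, put $w=\sqrt z$, so that $z=w^2$ and $x=w+1/w$. By the Remarks following the definition of the CC-interlacing condition we may assume, after relabelling, that $C_1$ is reciprocal and monic while $C_2$ is anti-reciprocal with $(z-1)\mid C_2$; write $d:=\deg C_1=\deg C_2\ge 4$. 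Then $\frac{\sqrt z\,C_2(z)}{(z-1)C_1(z)}=\frac{w\,C_2(w^2)}{(w^2-1)C_1(w^2)}$, and from $C_1^{*}=C_1$ and $C_2^{*}=-C_2$ one checks that this rational function is invariant under $w\mapsto 1/w$; hence it is a rational function of $x$, and after clearing denominators (Gauss's lemma) it equals $c_2(x)/c_1(x)$ with $c_1,c_2\in\zb[x]$ coprime --- coprimality descends from that of $C_1$ and $C_2$, since no root of $C_1$ is a root of $C_2$ and $z=1$, the only root lost in passing from $C_2$ to $C_2/(z-1)$, is not a root of $C_1$. Two facts should be recorded for later use: the function is \emph{odd} in $x$ (since $w\mapsto -w$, i.e. $x\mapsto -x$, reverses its sign) and it tends to $0$ as $x\to\infty$, so $\deg c_2=\deg c_1-1$.

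Next I would transport the interlacing. The roots of $C_1$ are $d$ simple points of $|z|=1$, none equal to $\pm1$, paired by complex conjugation; the map $e^{i\vartheta}\mapsto 2\cos(\vartheta/2)$ carries them onto $d$ distinct points of $(-2,2)$, which are exactly the roots of $c_1$ (so $\deg c_1=d$, $\deg c_2=d-1$), and likewise for $C_2$ and $c_2$. Circle-interlacing of the zeros of $C_1$ and $C_2$ becomes ordinary interlacing of the real zeros of $c_1$ and $c_2$, so $c_2/c_1$ is a real interlacing quotient: on $\R$ minus the $d$ roots of $c_1$ it is strictly monotone, sweeping all of $\R$ on each of the $d-1$ bounded gaps and running monotonically to $0$ along each unbounded ray. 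Finally, since $1+\frac1z=\frac{z+1}{z}=\frac1{\sqrt z}\bigl(\sqrt z+\frac1{\sqrt z}\bigr)=\frac{x}{\sqrt z}$ while the left-hand side of \eqref{NewQuotient} equals $\frac1{\sqrt z}\cdot\frac{c_2(x)}{c_1(x)}$, equation \eqref{NewQuotient} is equivalent to $g(x):=c_2(x)-x\,c_1(x)=0$, a polynomial equation of degree $d+1$.

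I would then locate the $d+1$ roots of $g$. On each of the $d-1$ bounded gaps between consecutive roots of $c_1$ the quotient $c_2/c_1$ sweeps monotonically over $\R$, so $c_2/c_1-x$ vanishes there exactly once, giving $d-1$ roots of $g$ inside $(-2,2)$. On the ray to the right of the largest root $\alpha_{\max}<2$ of $c_1$, the hypothesis $\lim_{x\to 2^{+}}c_2(x)/c_1(x)>2$ together with $c_2/c_1\to 0$ at infinity forces $c_2/c_1$ to decrease there and to meet $y=x$ exactly once, at some $x_0>2$; by oddness the mirror root $-x_0<-2$ lies on the left ray, and no further root of $g$ lies outside $[-2,2]$. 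Returning through $z=w^2$ (which identifies $x$ with $-x$): the pair $\{x_0,-x_0\}$ produces a single reciprocal pair $\{\theta,1/\theta\}$ with $\theta=\bigl(\frac{x_0+\sqrt{x_0^{2}-4}}{2}\bigr)^{2}>1$ real, each root of $g$ in $(-2,2)$ produces a conjugate pair on $|z|=1$, and a root at $x=0$ (if present) produces $z=-1$. Equivalently, $R(z):=z\,C_2(z)-(z^2-1)C_1(z)$ is anti-reciprocal of degree $d+2$, with $z=1$ as a spurious root introduced by clearing denominators (indeed $R(1)=C_2(1)=0$), so that $\widehat R(z):=R(z)/(z-1)$ is a reciprocal integer polynomial of degree $d+1$, monic up to sign, whose roots are precisely the solutions of \eqref{NewQuotient}: $\theta$, $1/\theta$, and $d-1\ge 3$ roots on $|z|=1$.

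To finish, the minimal polynomial $m$ of $\theta$ divides $\widehat R$; since $\widehat R$ is reciprocal, so is $m$ (otherwise $m^{*}$ would be a distinct irreducible factor with $1/\theta$ as a root), hence $1/\theta$ is a conjugate of $\theta$ and all conjugates of $\theta$ other than $\theta$ lie in the closed unit disc --- in fact on $|z|=1$ apart from $1/\theta$. Using $\deg C_1\ge 4$ one checks that $m$ has even degree $\ge 4$ and a conjugate on the circle, hence is a Salem polynomial and $\theta$ a Salem number; and for any root $\zeta$ of $\widehat R$ on $|z|=1$ that is \emph{not} a conjugate of $\theta$, all conjugates of $\zeta$ lie in $|z|\le1$, so by Kronecker's theorem $\zeta$ is a root of unity. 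Therefore the solutions of \eqref{NewQuotient} are a Salem number, its conjugates and roots of unity. The step I expect to be the main obstacle is precisely this last identification, together with the bookkeeping near the endpoints $x=\pm2$ (equivalently $z=\pm1$): one must verify that no root of $C_1$ or $C_2$ maps onto $\pm2$, that the spurious root $z=1$ is simple and correctly divided out, and --- this is exactly where $\deg C_1\ge4$ is needed --- that $m$ is a genuine Salem polynomial rather than, say, a quadratic unit $z^{2}-az+1$ whose two roots $\theta,1/\theta$ lie off the unit circle. The monotonicity of the real interlacing quotient is the engine of the argument; making it watertight across those boundary cases is the delicate part.
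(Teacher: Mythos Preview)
The paper does not give a proof of this statement: it is quoted verbatim as Theorem~3.1 of McKee--Smyth \cite{mckeesmyth} and used as a black box, with no argument supplied. So there is no ``paper's own proof'' to compare your attempt against.

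That said, your approach is exactly the one McKee and Smyth use in \cite{mckeesmyth}: pass to the real line via $x=\sqrt z+1/\sqrt z$, convert the CC-interlacing quotient into a real interlacing quotient $c_2/c_1$, rewrite \eqref{NewQuotient} as $c_2(x)/c_1(x)=x$, and count crossings using monotonicity on each gap. Your bookkeeping is essentially right; the points you flag at the end --- the location of $z=\pm 1$ among the roots, the simplicity of the spurious factor $z-1$, and the use of $\deg C_1\ge 4$ to rule out a quadratic $z^2-az+1$ --- are precisely the places where care is needed. One small inaccuracy: you assert that none of the roots of $C_1$ equals $\pm 1$, but by Remark~(ii) after the CC definition the points $\pm 1$ must lie among the roots of $C_1C_2$, and $z=-1$ may well be a root of $C_1$ (it is reciprocal); this only affects the degree count for $c_1$ and $c_2$ by one and is easily absorbed, but you should not exclude it a priori.
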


\begin{theorem}[\cite{mckeesmyth}, Theorem 5.1]
\label{mcsm51}
 Let $C/S$ be a CS-interlacing quotient with $S$ monic, of degree $\geq 4$.
 The solutions of the equation 
 \begin{equation}
 \frac{C(z)}{(z-1) S(z)}=1 + \frac{1}{z}
\end{equation}
are a Salem number, its conjugates and roots of unity.
\end{theorem}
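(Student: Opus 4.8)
The plan is to transfer the equation to the real line by the Tchebyshev-type substitution $x=\sqrt z+1/\sqrt z$ (exactly as in the CC case treated above) and then reduce everything to a root count for a single real polynomial. From the definition of a CS-interlacing quotient one records the inputs: $S$ is reciprocal of (even) degree $n=\deg S\ge 4$, $C$ is antireciprocal of the same degree with $z^{2}-1\mid C$, both have positive leading coefficients and $\gcd(C,S)=1$, all their roots lie on $|z|=1$ except for the reciprocal pair of real roots $\theta_{0}>1>1/\theta_{0}>0$ of $S$, and the circle-roots interlace. Since $x$ and $\dfrac{\sqrt z\,C(z)}{(z-1)S(z)}$ are both invariant under $z\mapsto 1/z$ (a short computation with $C^{*}=-C$, $S^{*}=S$), and since $z^{j}+z^{-j}$ and $(z^{j}-z^{-j})/(z^{1/2}-z^{-1/2})$ are integer polynomials in $x$, one gets
\[
\frac{\sqrt z\,C(z)}{(z-1)S(z)}=\frac{c(x)}{s(x)},\qquad c,s\in\zb[x],\ \gcd(c,s)=1,
\]
with $s$ monic and even of degree $n$, and $c$ odd of degree $n-1$ (the simple zero $z=1$ of $C$ being absorbed by the factor $z^{1/2}-z^{-1/2}$). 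Because $\sqrt z\,(1+\tfrac1z)=x$, the equation of the theorem becomes $\rho(x):=x\,s(x)-c(x)=0$, a monic odd polynomial of degree $n+1$; writing $\rho(x)=x\,\mu(x^{2}-2)$ with $\mu\in\zb[u]$ monic of degree $n/2$ and $u=z+z^{-1}=x^{2}-2$, clearing denominators gives $z\,C(z)-(z^{2}-1)S(z)=-\,z^{\,n/2}\,(z^{2}-1)\,\mu(z+z^{-1})$.

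The heart of the matter is that the CS-interlacing of $C$ and $S$ on $|z|=1$ translates into a strict interlacing of the real zeros of $c$ and $s$. The zeros of $s$ are $\pm x_{0}$ with $x_{0}=\sqrt{\theta_{0}}+1/\sqrt{\theta_{0}}>2$, together with the $\pm 2\cos(\phi_{j}/2)\in(-2,2)$; the zeros of $c$ are $0$ together with the $\pm 2\cos(\psi_{j}/2)\in(-2,2)$, where $e^{\pm i\phi_{j}}$ and $e^{\pm i\psi_{j}}$ are the circle-zeros of $S$ and of $C$. The interlacing $0<\psi_{1}<\phi_{1}<\psi_{2}<\phi_{2}<\cdots$, with the monotonicity of $\cos$, puts exactly one zero of $c$ strictly inside each of the $n-1$ gaps between consecutive zeros of $s$; hence in $c/s=\sum_{i}r_{i}/(x-\beta_{i})$ all residues $r_{i}$ have the same sign, which is $+$ (let $x\to x_{0}^{-}$, i.e. $z\to\theta_{0}^{-}$, where $C>0$, $S<0$, $z-1>0$). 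Consequently $(c/s)'<0$ between consecutive poles, $x-c(x)/s(x)$ is strictly increasing there, and $\rho$ has exactly one zero in each of the $n+1$ intervals into which the $n$ zeros of $s$ cut $\rb$ (including the two unbounded ones); since $\deg\rho=n+1$ these are all its zeros, all real and simple. The zero in $(x_{0},+\infty)$ is a number $x^{*}>x_{0}>2$, giving $u^{*}=(x^{*})^{2}-2>2$, hence a real $z^{*}>1$; the zero $-x^{*}$ gives the same $z^{*}$, the zero $x=0$ gives $z=-1$, and it remains to see that every other zero of $\rho$ lies in $(-2,2)$, hence corresponds to a root on $|z|=1$.

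The delicate point — the step I expect to be the main obstacle — is the interval $(2\cos(\phi_{1}/2),\,x_{0})$, which straddles $x=2$: a priori its unique solution of $c(x)/s(x)=x$ could lie above $2$, producing a second real root $>1$ and wrecking the conclusion. I would rule this out by evaluating $\rho$ at $x=2$. One has $s(2)=S(1)$, and writing $C(z)=(z-1)(z+1)C_{0}(z)$ with $C_{0}$ reciprocal (and $C_{0}(-1)\ne 0$ by simplicity of the roots of $C$ away from $z=1$) one gets $c(2)=2\,C_{0}(1)$; now $S(1)<0$ (a single sign change, from $\theta_{0}>1$, all remaining factors of $S(1)$ being positive) while $C_{0}(1)\ge 0$, so $\rho(2)=2S(1)-c(2)<0$. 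Since $x-c(x)/s(x)=\rho(x)/s(x)$ is strictly increasing on that interval, vanishes at the solution, and equals $\rho(2)/S(1)>0$ at $x=2$, the solution is $<2$. Hence $\pm x^{*}$ are exactly the zeros of $\rho$ outside $[-2,2]$, so $z^{*}$ and $1/z^{*}$ are the only roots of $z\,C(z)-(z^{2}-1)S(z)$ off the unit circle besides $z=\pm1$.

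Finally I would translate back and invoke Kronecker's theorem. From the displayed factorization the roots of $z\,C(z)-(z^{2}-1)S(z)$ are $\pm1$, $z^{*}$, $1/z^{*}$, and $n-2$ points $e^{\pm i\gamma_{j}}$ on $|z|=1$. The number $u^{*}=z^{*}+1/z^{*}$ is a root of the monic integer polynomial $\mu$, it is its unique root exceeding $2$, and all the other roots of $\mu$ lie in $(-2,2)$; hence the minimal polynomial of $u^{*}$ has $u^{*}$ as its only conjugate $>2$ and all the others in $(-2,2)$. As $\deg S\ge 4$, this minimal polynomial has degree $\ge 2$ (automatic in the applications, where $S$ is a multiple of a genuine Salem polynomial), so $z^{*}$ is a real algebraic integer $>1$ all of whose conjugates have modulus $\le 1$ with at least one of modulus $1$; i.e. $z^{*}$ is a Salem number, and its minimal polynomial, being reciprocal, has $1/z^{*}$ among its conjugates. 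Each $e^{i\gamma_{j}}$ is then either a conjugate of $z^{*}$ or, failing that, an algebraic integer whose full set of conjugates — lying in $\{\pm1\}\cup\{e^{\pm i\gamma_{\ell}}\}$, since $1/z^{*}$ cannot be a conjugate of $e^{i\gamma_{j}}$ without $z^{*}$ being one as well — is contained in $|z|=1$, hence a root of unity by Kronecker's theorem; and $z=\pm1$ are roots of unity. Therefore the solution set consists of a Salem number, its conjugates, and roots of unity.
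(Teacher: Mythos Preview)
The paper does not give its own proof of this statement; it is quoted as Theorem~5.1 of McKee--Smyth \cite{mckeesmyth} and used as a black box. So there is no proof in the paper to compare against.

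Your argument is essentially the standard one (and close to McKee--Smyth's): pass to the real line via $x=\sqrt z+1/\sqrt z$, use the CS-interlacing to force all residues in the partial-fraction expansion of $c/s$ to have the same sign, deduce that $x-c(x)/s(x)$ is strictly increasing between consecutive poles, count the $n+1$ real simple zeros of $\rho=xs-c$, and locate the two that escape $[-2,2]$ by evaluating at $x=2$. The key computations check: $s(2)=S(1)<0$ because $(1-\theta_0)(1-1/\theta_0)<0$, $c(2)=2C_0(1)\ge 0$, hence $\rho(2)=2S(1)-2C_0(1)<0$, so $\rho(2)/s(2)>0$ and the unique zero of $\rho$ in $(2\cos(\phi_1/2),x_0)$ lies below $2$. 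The translation back via $\mu(u)=\tilde s(u)-\tilde c_0(u)$ and Kronecker's theorem is clean.

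The only soft spot is your parenthetical ``(automatic in the applications, where $S$ is a multiple of a genuine Salem polynomial)''. You are right that, under the bare CS-interlacing hypotheses as stated, nothing forbids the minimal polynomial of $u^{*}$ from having degree $1$; then $z^{*}$ would be a reciprocal quadratic Pisot unit rather than a Salem number in the strict sense adopted in this paper (which requires a conjugate of modulus~$1$). This is an edge case of the theorem's phrasing, not a defect of your argument; McKee--Smyth either absorb it by convention or rule it out by context. Flagging it, as you do, is appropriate.
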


Theorem \ref{theoremA} provides a CS-interlacing quotient if 
the common degree $n = \deg(Q_1) = \deg(P_1)$ is even, as mentioned above;
then the quotient $(z-1)L/KT$ is a CS-interlacing quotient, with $KT$ a monic polynomial. 
As such, we can now apply Theorem \ref{mcsm51} to this quotient. 
 This theorem offers the construction of another Salem polynomial $T_2$ : 
 \begin{equation}
  T_2(z)=zL(z)-(z+1)K(z)T(z).
 \end{equation}
 Denote $\theta$ the dominant root of $T$ and $\theta_2$ the dominant root of $T_2$.
Remark that $\theta_2$ is always different from $\theta$. 
Otherwise, if $\theta_2=\theta$ then 
$\theta L(\theta)= T_2(\theta)+(z+1)K(\theta)T(\theta)=0$, which is impossible because all zeros of $L$ lies on the unit circle.
 
For exemple, with the smallest Salem known number $\theta \approx 1.1762808$ of degree 10, 
root of $T(z)=z^{10}+z^9-z^7-z^6-z^5-z^4-z^3+z+1$, 
we obtain by this construction, with the expansive polynomial $P(z)= x^{10} + 2 x^9 + x^8 + x^2 + 2 x + 2$ over $T$,  
the Salem number $\theta_2 \approx 1.5823471$ of degree 6, 
root of the Salem polynomial $(z+1)(z^2+z+1)(z^2-z+1)(z^6-z^4-2z^3-z^2+1)$. 
We remark that this sequence depends on the choice of the polynomial $P$ over $T$.

\begin{theorem}[\cite{mckeesmyth}, Theorem 5.2]
 Let $S_2/S_1$ be an SS-interlacing quotient with $S_1$ monic.
 If $\displaystyle \lim_{z-> 1+} \frac{S_2(z)}{(z-1)S_1(z)} <2$
 then the solutions of the equation
 \begin{equation}
 \frac{S_2(z)}{(z-1) S_1(z)}=1 + \frac{1}{z}
\end{equation}
are a Salem number, its conjugates and roots of unity.
\end{theorem}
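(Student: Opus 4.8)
The plan is to use the ``$\sqrt z$-Tchebyshev'' method of McKee and Smyth, exactly as in the proofs of their Theorems 3.1 and 5.1 recalled above; the SS-case is the analogue in which \emph{both} $S_1$ and $S_2$ already carry one real zero off the unit circle. First I would clear denominators: $\frac{S_2(z)}{(z-1)S_1(z)}=1+\frac1z$ is equivalent to $R(z):=zS_2(z)-(z^2-1)S_1(z)=0$. Since $S_1,S_2$ have positive leading coefficients and equal degree $d$, the top terms do not cancel, so $R\in\zb[z]$ has degree $d+2$; moreover a one-line check using that one of $S_1,S_2$ is reciprocal and the other anti-reciprocal shows $R$ itself is reciprocal or anti-reciprocal, so that $R$ --- or $R/(z-1)$ --- is, up to sign, a monic reciprocal integer polynomial. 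Everything then reduces to locating the zeros of $R$.

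Next I would substitute $x=\sqrt z+1/\sqrt z$. Writing $y=\sqrt z$ one has $z+1=yx$ and $(z-1)^2=y^2(x^2-4)$, so that --- after factoring out of $S_1,S_2$ the forced factor $z-1$ and (according to the parity of $d$ and to whether one is in the $1$-SS or the $2$-SS case) possibly $z+1$, and dividing $R(z)$ by the appropriate power of $y$ --- the equation $R(z)=0$ becomes a polynomial equation $g(x)=0$ with $g\in\zb[x]$: the two awkward factors $z\mp1$ recombine with the (anti)reciprocity of $S_1,S_2$ to leave behind only integral powers of $x^2-4$, exactly as in Theorem 3.1. Under $z\mapsto x$ the unit circle maps onto $[-2,2]$ (with $\pm1\mapsto\pm2$), the set $(0,1)\cup(1,\infty)$ maps two-to-one onto $(2,\infty)$ (with $z$ and $1/z$ having the same image), and the SS-interlacing of the zeros of $S_1$ and $S_2$ on $\{|z|=1\}\setminus\{1\}$ becomes a strict interlacing on $(-2,2)$ of the zeros of the two real polynomials attached to $S_1$ and $S_2$; meanwhile the off-circle root pair $\beta,1/\beta$ of $S_1S_2$ (with $\beta>1$) corresponds to the single point $\sqrt\beta+1/\sqrt\beta>2$.

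Then I would count the zeros of $g$ outside $[-2,2]$. By the strict interlacing, the relevant real rational function is monotone with one simple pole between consecutive zeros of its denominator, so comparison with the (explicit, monotone) transform of $1+\frac1z$ produces exactly one solution of $g(x)=0$ in each such sub-interval of $(-2,2)$; it then only remains to decide whether one further real zero escapes past $x=2$. This is precisely where the hypothesis $\lim_{z\to1^{+}}\frac{S_2(z)}{(z-1)S_1(z)}<2$ is used: rewritten in the variable $x$ it fixes the sign of $g$ (equivalently of $\frac{S_2(z)}{(z-1)S_1(z)}-1-\frac1z$) as $x\to2^{+}$, hence the parity of the number of real zeros of $g$ in $(2,\infty)$, and combined with the interlacing count it forces that number to be exactly one, say $x_0>2$; the value $2$ itself is the degenerate borderline excluded by the strict inequality. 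Transforming back, $x_0$ lifts to a real $z_0>1$ together with its reciprocal, every other zero of $R$ lies on $|z|=1$, and since $R$ (or $R/(z-1)$) is a reciprocal integer polynomial those unit-circle zeros that are not conjugates of $z_0$ are roots of unity by Kronecker's Theorem. Hence $z_0>1$ is an algebraic integer all of whose conjugates lie in the closed unit disc with at least one on it, i.e.\ a Salem number (one needs, as in Theorems 3.1 and 5.1, the degree $\ge4$ for $z_0$ not to reduce to a small real unit), and the solution set of the equation is exactly $\{z_0\}$, its conjugates, and roots of unity, as asserted.

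I expect the main obstacle to be the sign analysis of the third step: proving that \emph{exactly one} zero of the transformed polynomial leaves $[-2,2]$ and that it leaves to the \emph{right}. Concretely this is a question of the sign of $g$ at the endpoint $x=2$, equivalently of $\frac{S_2(z)}{(z-1)S_1(z)}-1-\frac1z$ as $z\to1^{+}$, which is exactly where the hypothesis lives; getting the parity count right demands careful control of the boundary behaviour of the interlacing quotient there. A secondary, purely bookkeeping, difficulty is tracking the factors $z\pm1$ --- which of $S_1,S_2$ is divisible by $(z-1)$, and whether one of them is also divisible by $(z+1)$, depends on the reciprocity types and the parity of $d$ (the $1$-SS versus $2$-SS distinction) --- and this changes the exact shape of $g$ without affecting the logic of the argument.
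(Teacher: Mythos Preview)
The paper does not prove this statement at all: it is merely quoted verbatim as Theorem 5.2 of McKee--Smyth \cite{mckeesmyth}, with no accompanying argument. There is therefore nothing in the present paper to compare your proposal against.

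That said, your sketch is a reasonable reconstruction of the McKee--Smyth method itself --- clearing denominators to a reciprocal (or anti-reciprocal) integer polynomial, passing to the real line via $x=\sqrt{z}+1/\sqrt{z}$, exploiting the interlacing to trap all but one zero in $[-2,2]$, and using the hypothesis $\lim_{z\to 1^+}\frac{S_2(z)}{(z-1)S_1(z)}<2$ to pin down the sign at the endpoint $x=2$ --- and you have correctly flagged the two points requiring care (the endpoint sign analysis and the $z\pm1$ bookkeeping distinguishing the 1-SS and 2-SS cases). If your intent was to supply the proof that the present paper omits, what you have written is a credible outline, but you would still need to carry out the case split and the monotonicity/parity count in full; for the complete details you should consult \cite{mckeesmyth} directly rather than this paper.
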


Under the assumption that Lehmer's Conjecture is true,
Theorem 9.2 in \cite{mckeesmyth} shows that
the smallest Salem number $\theta$ is such that there exists
a type 2 SS-interlacing quotient
$S_1/S_2$, with two monic polynomials $S_1$ and $S_2$ 
satisfying
\begin{equation}
\label{typeIV}
\frac{S_2(z)}{(z-1) S_1(z)} = \frac{2}{1+z}
\end{equation}
for which the only solutions of \eqref{typeIV}
are $z=\theta$, its conjugates and perhaps some
roots of unity. The other small Salem numbers 
are probably produced by type 2 SS-interlacing quotients with a
condition of type \eqref{typeIV} as well; comparing with 
the Local Density Conjecture
in section 2.2, they are obtained from expansive polynomials 
of small Mahler measure
equal or close to $2$.

{\it Extension of the field of coefficients:} CC-interlacing quotients
were extended by Lakatos and Losonczi \cite{lakatoslosonczi} 
to classes of reciprocal polynomials  
having coefficients in $\mathbb{R}$.

%-----------------------------------------------------------------------------------------------------------
\subsection{Association Theorems between expansive polynomials and Salem polynomials}
\label{sec:SS4S2}

The classes $A_q$ of Salem numbers, in Theorem \ref{theoremA}, 
call for two disjoint classes
of monic expansive polynomials $P$: those for which 
the constant term $P(0)$ is positive (case $\epsilon = -1$), 
those for which it is negative (case $\epsilon = +1$).
Below we focus on the existence of expansive polynomials over
a Salem polynomial when the Mahler measure
M$(P)$ is equal to $P(0)$ (case $\epsilon = -1$). 
In \S \ref{sec:SS4.4S2} we indicate how the 
previous construction can be adapted to deduce existence theorems
in the second case $\epsilon = +1$.

%------------------------------------------------------------------------------------
\subsubsection{A criterion of expansivity}
\label{sec:SS4.1S2}

\begin{theorem}
\label{QtoP}
Let $T$ be an irreducible Salem polynomial of degree $m \geq 4$. 
Denote by $\beta>1$ its dominant root.  
Let $P$ be a polynomial $\in \rb \left[ z \right] $ of degree $m$ such that 
 \begin{equation}
 \label{PtoT}
   (z-1)T(z) =  z P(z) - P^*(z). 
 \end{equation}
\noindent
$P$ is an expansive polynomial if and only if 
 \begin{itemize}
  \item  $P(1) T(1) < 0$, and,
  \item for every zero $\alpha$ of $T$ of modulus $1$, 
   $$(\alpha-1)\alpha^{1-m}P(\alpha)T'(\alpha)~~ \mbox{is real and negative}.$$
 \end{itemize}
 
\end{theorem}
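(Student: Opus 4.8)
The plan is to analyze the roots of $P$ via the association equation \eqref{PtoT} and the algebraic curve $Q(z,t) = z P(z) + \epsilon t P^{*}(z)$ with $\epsilon = -1$ introduced in \S\ref{sec:SS1S2}. Rewrite \eqref{PtoT} as $Q(z) := zP(z) - P^{*}(z) = (z-1)T(z)$, which is anti-reciprocal; the whole point is that $P$ expansive is equivalent to $zP(z)$ having exactly one root in $|z|<1$ (namely $z=0$) and $m$ roots in $|z|>1$, and one must detect this from the location and ``entrance/exit'' nature of the roots of $Q$ on $|z|=1$. First I would note that on $|z|=1$ one has $|P(z)| = |P^{*}(z)|$, so the branches $z = Z(t)$ of the curve never cross the unit circle for $0 \le t < 1$; hence each of the $m+1$ branches stays inside $|z|\le 1$ or outside $|z|\ge1$ according to whether it emanates from a root of $zP(z)$ inside or outside the unit disc. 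Counting: $P$ is expansive iff exactly one branch is interior, iff $Q = (z-1)T$ has exactly one ``exit'' and $m$ ``entrances''. Since $Q = (z-1)T$ and $T$ is an irreducible Salem polynomial, $Q$ automatically has its root structure fixed — one root $\beta > 1$, one root $1/\beta \in (0,1)$, the root $z=1$, and $m-2$ roots on $|z|=1$ — so the genuine content is to show that the two conditions in the statement are exactly what forces $z=1$ together with $1/\beta$ to be the unique interior limit and $\beta$ the unique exterior one, rather than some unit-circle root of $T$ being reached by an interior branch (which would create an extra interior root of $zP$, i.e. a non-expansive $P$).

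The mechanism to turn ``entrance vs. exit'' into a computable sign is to differentiate the branch equation. For a simple root $z_0$ of $Q(z) = Q(z,1)$ on $|z|=1$, implicit differentiation of $Q(z,t)=0$ gives $\frac{dz}{dt}\big|_{t=1} = -\,\partial_t Q / \partial_z Q = P^{*}(z_0)/Q'(z_0)$, and the branch is interior (an exit) or exterior (an entrance) according to the sign of $\operatorname{Re}\!\big(\bar z_0 \, \frac{dz}{dt}\big)$, i.e. whether the radial component of the velocity points inward or outward as $t \to 1^{-}$. So for each unit-modulus root $\alpha$ of $T$ (which is a simple root of $Q$ since $T$ has simple roots and $\alpha \notin\{\pm1\}$), the requirement ``$\alpha$ is an entrance'' becomes a sign condition on $\operatorname{Re}\!\big(\bar\alpha\, P^{*}(\alpha)/Q'(\alpha)\big)$. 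Using $\alpha\bar\alpha = 1$, the anti-reciprocal identity $P^{*}(\alpha) = \alpha P(\alpha) - (\alpha-1)T(\alpha) = \alpha P(\alpha)$ (as $T(\alpha)=0$), and $Q'(\alpha) = \frac{d}{dz}\big[(z-1)T(z)\big]_{z=\alpha} = (\alpha-1)T'(\alpha)$, the quantity $\bar\alpha\, P^{*}(\alpha)/Q'(\alpha)$ simplifies to $P(\alpha)/\big((\alpha-1)T'(\alpha)\big)$; multiplying numerator and denominator appropriately and tracking the modulus-one factors, ``entrance'' reduces precisely to the stated condition that $(\alpha-1)\alpha^{1-m}P(\alpha)T'(\alpha)$ be real and negative. (The factor $\alpha^{1-m}$ appears from writing $P^{*}(\alpha) = \alpha^m P(1/\alpha) = \alpha^m \overline{P(\alpha)}$ when one wants to phrase everything through $P(\alpha)$ and its conjugate; I would be careful to do this bookkeeping cleanly, because the reality assertion is half of the hypothesis and must emerge, not be assumed.) Similarly, at the real root $z=1$ of $Q$, the branch through $z=1$ must be an exit accompanying the interior root $1/\beta$; the sign of the real velocity there, computed the same way with $P^{*}(1) = P(1)$ and the simple factor $(z-1)$ absorbed, comes down to the sign of $P(1)T(1)$, giving the first bullet $P(1)T(1) < 0$.

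Having set up these local sign computations, the proof assembles as follows. For the forward direction, assume $P$ is expansive: then $zP$ has exactly one root ($z=0$) in $|z|<1$, so exactly one branch is interior; that branch must terminate at the unique interior root of $Q=(z-1)T$, which is $1/\beta$, and by continuity/perturbation it passes near $z=1$ as well — here I would invoke the structure that the interior branch is the one ``responsible'' for $z=1$ and $1/\beta$, forcing the exit sign at $z=1$ ($P(1)T(1)<0$) and forcing every unit-circle root $\alpha$ of $T$ to be an entrance, which by the computation above yields the reality and negativity of $(\alpha-1)\alpha^{1-m}P(\alpha)T'(\alpha)$. Conversely, if both bullet conditions hold, then $z=1$ is an exit and every $\alpha$ on the circle is an entrance, so among the $m+1$ branches at most one (the one through $z=1$) is interior; since $\deg P = m$, $zP$ has $m+1$ roots counted with the branch at $0$, and a branch-counting / degree argument (the interior branches of the curve are in bijection with the roots of $zP$ inside $|z|<1$) shows $zP$ has at most — hence exactly, since $z=0$ is one — one root in the open unit disc, i.e. all $m$ roots of $P$ lie in $|z|>1$ (none can lie on $|z|=1$ because then $Q$ would vanish there with the wrong multiplicity, contradicting simplicity of the roots of $T$), so $P$ is expansive. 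The main obstacle I anticipate is the first direction's topological step — rigorously arguing that expansivity forces precisely the interior branch to be the one landing at $z=1$ and $1/\beta$ while all circle-roots of $T$ are entrances — since this requires controlling the global branch structure of $z=Z(t)$, not just the infinitesimal behavior at $t=1$; the cleanest route is probably an argument by the continuity of roots in $t$ together with the fact (already exploited in \S\ref{sec:SS2S2}) that no branch meets $|z|=1$ for $t<1$, so the interior/exterior dichotomy is locked in along each branch and one only needs to match endpoint counts.
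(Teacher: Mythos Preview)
Your approach is essentially the paper's: study the algebraic curve $Q_t(z)=zP(z)-tP^*(z)$, note that branches don't cross $|z|=1$ for $0\le t<1$, and read off the sign of the radial derivative at $t=1$ to classify each unimodular root of $Q$ as entrance or exit. The computation $h'_\alpha(1)=P^*(\alpha)/Q'(\alpha)$ and the simplification via $P^*(\alpha)=\alpha^m\overline{P(\alpha)}$ match the paper exactly, and the reality of $(\alpha-1)\alpha^{1-m}P(\alpha)T'(\alpha)$ does fall out cleanly (the paper handles it via the inversion symmetry $Q_{1/t}(1/z)=-(tz^{m+1})^{-1}Q_t(z)$, which forces $h'_\alpha(1)=\lambda\alpha$ with $\lambda\in\rb$).

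However, your branch accounting at $z=1$ and $1/\beta$ is wrong, and this breaks both directions as written. The $m+1$ roots of $Q=(z-1)T$ are $\beta$, $1/\beta$, $z=1$, and the $m-2$ unimodular roots of $T$; only the last $m-1$ lie on $|z|=1$ and are subject to the entrance/exit dichotomy. When $P$ is expansive the single interior branch starts at $z=0$ and must terminate at the \emph{only} root of $Q$ strictly inside the disc, namely $1/\beta$ --- it does not ``pass near $z=1$'', and $z=1$ is \emph{not} an exit. In fact $P(1)T(1)<0$ is precisely the condition that $z=1$ is an \emph{entrance}: at $z_0=1$ one has $\bar z_0\,dz/dt=P(1)/T(1)$, and this being negative means the branch approaches from $|z|>1$. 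So the correct picture is: $P$ expansive $\Longleftrightarrow$ the branch to $1/\beta$ is the unique interior one $\Longleftrightarrow$ \emph{all} $m-1$ on-circle roots of $Q$ (including $z=1$) are entrances. Your forward direction needs this corrected bijection, and your converse as stated (``the one through $z=1$ is interior'') is backwards: the interior branch is the one ending at $1/\beta$, forced there because $1/\beta$ cannot be reached from outside. The paper also explicitly rules out $P$ vanishing on $|z|=1$ in the converse by observing that $P(\alpha)=0$ with $|\alpha|=1$ forces $P^*(\alpha)=\alpha^m\overline{P(\alpha)}=0$, hence $Q(\alpha)=0$, so $\alpha$ would be one of the unimodular roots of $Q$ --- and then the strict sign hypotheses rule this out; your multiplicity remark is not quite the right mechanism here.
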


\begin{remark}
The equation \eqref{PtoT} implies that P is monic and 
 denoting $P(z)= p_m z^m + p_{m-1} z^{m-1} + .. + p_1 z + p_0$, then  $p_i \in \zb$ with  
$|p_0| > p_m = 1$.  
Since the RHS of \eqref{PtoT} vanishes at $z=1$,
the factorization of the LHS of \eqref{PtoT} by $z-1$ is natural.
Denote $Q(z):=(z-1)T(z)$.
\end{remark}
 
\begin{proof}
{\it The conditions are necessary:}
as $T$ is reciprocal, $Q$ is anti-reciprocal of degree $m+1$;  
$Q$ has exactly one zero $\beta$ outside the closed unit disc $\overline{D(0;1)}$, 
 one zero $\frac{1}{\beta}$ in $D(0;1)$, and $m-1$ zeros on $C(0;1)$, 
  which are $z=1$ and $ \displaystyle \frac{m}{2}-1$ 
pairs of complex conjugates $ ( \alpha_i, \overline{\alpha_i}) $.
Let $Q_t$ be the parametric polynomial 
\begin{equation}
\label{def_de_Qt}
 Q_t (z)=z P(z)-t P^* (z).
\end{equation}
On the unit circle $C(0;1)$, 
we have $|z P(z)| = |z| . |P(z) | = |P(z)| = |P^* (z)|$, and $|P(z)| \neq 0$ 
because P is an expansive polynomial.
Then, for $0 < t < 1$,  $|Q_t(z) - z P(z)|=|- t P^*(z)|  < |P^*(z)|=|z P(z)|$.
The theorem of Rouch\'e implies that the polynomials $Q_t(z)$ and $zP(z)$ have 
the same number of zeros in the compact $\overline{D(0;1)}$.
So $Q_t(z)$ has exactly one zero in $\overline{D(0;1)}$ and $m$ zeros 
outside.

By \eqref{def_de_Qt}, for $t > 1$, $\displaystyle Q_{\frac{1}{t}}(\frac{1}{z})= \frac{-1}{t z^{m+1}} Q_t(z)$.
Then, if $\alpha $ is a zero of $Q_t$, then $\displaystyle \frac{1}{\alpha}$ is a zero of  $ \displaystyle Q_{\frac{1}{t}}(\frac{1}{z})$. 
Moreover, $ \displaystyle \frac{1}{\overline{\alpha}}$ is a zero of  
$ \displaystyle Q_{\frac{1}{t}}(\frac{1}{z})$ as well because $\displaystyle Q_{\frac{1}{t}} \in \rb \left( z \right) $
(thus we obtain the zeros of $\displaystyle Q_{\frac{1}{t}} $ from those of $Q_t$ by an inversion of centre $0$ of radius $1$).
Let $\displaystyle f(z):=\frac{Q(z)}{P^*(z)}$.
Then, by \eqref{def_de_Qt},
\begin{equation}
\label{relQtf}
 \frac{Q_t(z)}{P^*(z)} =  f(z) + (1-t).
\end{equation}
For $\displaystyle \alpha \in \{ \frac{1}{\beta}, \beta, 1 , \alpha_1, \overline{\alpha_1},\alpha_2, \overline{\alpha_2}, ..., 
\alpha_{\frac{m}{2}-1}, \overline{\alpha_{\frac{m}{2}-1}   } \}$, when $z$ lies 
in a neighbourhood
of $\alpha$, the equation $Q_t(z)=0$ is equivalent to the equation $f(z)=t-1$.
As $Q$ has simple zeros, $Q(\alpha)=0$ and $Q'(\alpha) \neq 0$. 
Then $f(\alpha)=0$ and 
$f'(\alpha)=\frac{Q'(\alpha)P^*(\alpha)-Q(\alpha)(P^*)'  (\alpha)}{P^*(\alpha)^2}=\frac{Q'(\alpha)}{P^*(\alpha)} \neq 0$.
By the local inversion theorem, in the neighbourhood of $t=1$, 
there exist an analytic function 
$h_{\alpha}$ such that the equation $Q_t(z)=0$ is equivalent to
$z=h_{\alpha}(t)$, with $h_{\alpha}(1)=\alpha$ and $h'_{\alpha}(1) \neq 0$.
Then $\{ h_{\alpha}(t) ;  \alpha \in \{ \frac{1}{\beta}, \beta, 1 , (\alpha_i )_{ 1 \leq i <m } \} \}$ is the set of zeros of $Q_t(z)$.
In the neighbourhood of $t=1$, we have : 
\begin{equation}
\label{taylor}
 h_{\alpha}(t)=h_{\alpha}(1) + (t-1) h'_{\alpha}(1)+...
\end{equation}
By the inversion property of $Q_t$, if $\displaystyle h_{\alpha}(t)$ is 
a zero of $\displaystyle Q_t(z)$ then $\displaystyle 1 / \overline{h_{\alpha}(t)}$ 
is a zero of $\displaystyle Q_{\frac{1}{t}}$ : 
there exist $\displaystyle \widetilde{\alpha}  \in \{ \beta^{-1}, \beta, 1 , (\alpha_i )_{ 1 \leq i <m } \} $ such that 
\begin{equation}
\label{InvProp}
 \frac{1}{\overline{h_{\alpha}(t)}}=h_{\widetilde{\alpha}}(\frac{1}{t}).
\end{equation}
When $t=1$, we obtain $\displaystyle 1 / \overline{h_{\alpha}(1)}= 1 / \overline{\alpha}
=\alpha / |\alpha|^2$ and 
$\displaystyle h_{\widetilde{\alpha}}(1)=\widetilde{\alpha}$.
In particular, for any 
$\alpha$ of modulus 1,
we obtain
$\displaystyle \widetilde{\alpha}=\alpha$ and 
 $\displaystyle 1 / \overline{h_{\alpha}(t)}=h_{\alpha}(\frac{1}{t})$, that is
  \begin{equation}
  \label{eqdeh}
    {h_{\alpha}(t)} \overline{h_{\alpha}(\frac{1}{t})}=1.
  \end{equation}
In this case, we denote $\displaystyle h_{\alpha}(t)=X(t)+iY(t)$. Then \eqref{eqdeh} becomes
 $\displaystyle (X(t)+iY(t))(X(1/t)-iY(1/t))=1$.
 The imaginary part of this equation is $\displaystyle Y(t)X(1/t) - X(t) Y(1/t)=0$.
On differentiation we obtain for $t=1$: $\displaystyle Y'(1)X(1)-Y(1)X'(1)=0$.
Thus $\displaystyle X'(1) / X(1) = Y'(1) / Y(1)$. 
Let $\displaystyle \lambda \in \rb$ be this quotient.  
Thus $\displaystyle h'_{\alpha}(t)= X'(t)+iY'(t)=\lambda (X(t)+iY(t))=\lambda h_\alpha(t)$, with 
$\displaystyle h'_{\alpha}(1)=\lambda \alpha$ for $t = 1$.
For any $\alpha$ on the unit circle, equation \eqref{taylor} gives  
 \begin{equation}
\label{taylor1}
  h_{\alpha}(t)=\alpha \left[ 1 + (t-1) \lambda +...\right].
 \end{equation}
Since $\displaystyle \alpha \neq \frac{1}{\beta}$, then 
$\displaystyle |h_{\alpha}(t)| > 1=|\alpha|$ for $0<t<1$, implying $\lambda < 0$.   
As $h_{\alpha}$ satisfies the equation \eqref{relQtf}, we have : 
 \begin{equation}
\label{best}
   Q(h_{\alpha}(t))+(1-t)P^*(h_{\alpha}(t))=0.
 \end{equation}
Deriving \eqref{best} at $t=1$, we obtain: 
$\displaystyle h'_{\alpha}(1) Q'(\alpha) - P^* (h_{\alpha}(1))=0$. 
We deduce $\displaystyle h'_{\alpha}(1)= P^*(\alpha) / Q'(\alpha)$.
 Then, 
$$
\begin{aligned}
 0 > \lambda &= \frac{h'_{\alpha}(1)}{\alpha} = \frac{P^*(\alpha)}{\alpha Q'(\alpha)} = \frac{{\alpha}^m P(\frac{1}{\alpha})}{\alpha Q'(\alpha)} \\
  &=\frac{{\alpha}^{m-1} P(\overline{\alpha}) P(\alpha)}{P(\alpha)Q'(\alpha)} = \frac{\left|P(\alpha)\right|^2 }{{\alpha}^{1-m} P(\alpha)Q'(\alpha)}.
\end{aligned}
$$

We deduce $\displaystyle 
  \alpha^{1-m} P(\alpha)Q'(\alpha) <0$, for $\alpha = 1$ or any root of $T$
of modulus 1. Let us transform these inequalities as a function of $T$.
For $\alpha = 1$, $Q'(1) = T(1)$, 
since $\displaystyle Q'(z)=T(z)+(z-1)T'(z)$, and we readily obtain : 
 \begin{equation}
  P(1)T(1) <0.
 \end{equation}
And if $\alpha \neq 1$ is a root of $T$ of modulus $1$, 
since $\displaystyle Q'(\alpha)=(\alpha-1)T'(\alpha)$,
 \begin{equation}
 \label{Entree}
  (\alpha - 1) \alpha^{1-m} P(\alpha)T'(\alpha) <0.
 \end{equation}
We remark that  $(\alpha - 1) \alpha^{1-m} P(\alpha)T'(\alpha) <0 \Leftrightarrow  (\overline{\alpha} - 1) \overline{\alpha}^{1-m} P(\overline{\alpha})T'(\overline{\alpha}) <0$
since both quantities are real: the condition \eqref{Entree} is related to the pair 
$(\alpha, \overline{\alpha})$ for any $\alpha$ of modulus 1. Hence the claim.

%--- -- 
{\it The conditions are sufficient:}
first let us show that $P$ has no zero of modulus $1$.
Suppose the contrary: that there exist $\alpha, |\alpha| = 1$,
such that $P(\alpha)=0$.
 Then, as $P$ is in $\displaystyle \rb \left[ z \right]$, $\displaystyle \overline{\alpha}$ is a zero of $P$.
 Then, $\displaystyle Q(\alpha)=\alpha P(\alpha) - (\alpha)^m P(\overline{\alpha})=0$. 
So $\alpha$ would be a zero of modulus $1$ of $Q(z)=(z-1)T(z)$. The only possibilities are
$z=1$ and the zeros of $T$ of modulus 1.
 If $\alpha =1$, then the condition $0=P(1)T(1)<0$ leads to a contradiction.
Similarly, if $\alpha \neq 1$, 
then $\displaystyle 0=(\alpha-1)\alpha^{1-m}P(\alpha)T'(\alpha)<0$ would also be impossible.

Let us show that $zP(z)$ has one zero in $D(0;1)$ (which is $z=0$) 
and $m-1$ zeros outside  $\overline{D(0;1)}$.
Let $\alpha $ be a zero of $T$ of modulus $1$. Let $h_{\alpha}$ defined as in
\eqref{taylor} with \eqref{taylor1}: 
 $$\displaystyle h_{\alpha}(t)= \alpha + h'_{\alpha}(1)(t-1) + ... $$ 
The assumption $\displaystyle (\alpha-1)\alpha^{1-m}P(\alpha)T'(\alpha)<0$ (or  $  P(1) T(1) < 0$ if $\alpha = 1$) 
implies that $\displaystyle |h_{\alpha}(t)| >  |\alpha|=1$ for $t$  in the neighbourhood of $1$,  $t < 1$. 
Thus, in this neighbourhood, $Q_t$ has at least $m-2$ zeros outside $\displaystyle \overline{D(0;1)}$. 
 These zeros belong to the algebraic branches which end at the zeros $\alpha$ of $Q$ of modulus 1.
 
Moreover, as $|P(z)|=|P^*(z)|$, for $|z|=1$, we have : $|zP(z)|=|P(z)| = |P^*(z)| > |tP^*(z)|$
for all $t$, $0 < t < 1$. Hence $Q_t$ has no zero on the unit circle, for 
all $t$, $0<t<1$. By continuity of the algebraic curves defined by $Q_t(z)=0$, the branch 
ending at $\beta$ is included in $\cb \setminus \overline{D(0;1)}$ : 
 this branch originates from a root of $zP(z)$ which lies outside $\overline{D(0;1)}$.
 In the same way, the branch ending at $\displaystyle 1 / \beta$ 
originates from a root of $zP(z)$ which is inside $D(0;1)$.
Therefore $P$ is expansive.
\end{proof}
 %----------------------------------------------------------------

The above Criterion of expansivity, i.e. Theorem \ref{QtoP}, only 
involves conditions at the roots of $Q$ of modulus 1. However, 
though the existence of expansive polynomials $P$ satisfying
\eqref{PtoT} is only proved below in \S \ref{sec:SS4.2S2}, 
the following Proposition shows that two extra inequalities
at the Salem number $\beta$ and its inverse $\beta^{-1}$ 
should also be satisfied.
 
\begin{proposition}
\label{QtoPplusplus}
 Let $T$ be an irreducible Salem polynomial of degree $m \geq 4$. 
Denote by $\beta>1$ its dominant root.  
 Let $P$ be an expansive polynomial $\in \rb \left[ z \right] $, of degree $m$, such that 
 \begin{equation}
   (z-1)T(z) =  z P(z) - P^*(z). 
 \end{equation}
Then, the polynomial $P$ satisfies the two properties:
$$ {\rm (i)} ~P(1 / \beta) T'(1 / \beta) < 0 \qquad {\rm and} \qquad
{\rm (ii)} ~P(\beta) T'(\beta) > 0.$$
\end{proposition}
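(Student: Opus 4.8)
The plan is to exploit the parametric family $Q_t(z) = zP(z) - tP^*(z)$ already introduced in the proof of Theorem~\ref{QtoP}, together with the analytic branches $h_\alpha(t)$ with $h_\alpha(1)=\alpha$, but now taken at $\alpha=\beta$ and $\alpha=1/\beta$ rather than at the unimodular roots. Since $P$ is assumed expansive, Rouch\'e's theorem gives (as in the proof of Theorem~\ref{QtoP}) that for $0<t<1$ the polynomial $Q_t$ has exactly one zero in $\overline{D(0;1)}$ and $m$ outside; the branch through $1/\beta$ stays strictly inside the unit disc and the branch through $\beta$ stays strictly outside. This is the sign information I want to convert into inequalities at $t=1$.

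First I would, exactly as in the necessity part of Theorem~\ref{QtoP}, differentiate the branch equation $Q(h_\alpha(t)) + (1-t)P^*(h_\alpha(t)) = 0$ at $t=1$, using $Q(\alpha)=0$ and $Q'(\alpha)\neq 0$ (all roots of $Q=(z-1)T$ are simple, including $\beta$ and $1/\beta$), to get $h'_\alpha(1) = P^*(\alpha)/Q'(\alpha)$. Here $\alpha=\beta$ or $\alpha=1/\beta$ are real, so $h_\alpha(t)$ is real for real $t$ near $1$ and $h'_\alpha(1)\in\R$. The monotonicity of $|h_\alpha(t)|$ near $t=1^-$ — $|h_\beta(t)|>\beta$ forces (together with $h_\beta$ real, $h_\beta(1)=\beta>1$) the derivative $h'_\beta(1)<0$, while $|h_{1/\beta}(t)|<1/\beta$ with $h_{1/\beta}(1)=1/\beta\in(0,1)$ forces $h'_{1/\beta}(1)>0$. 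Thus I obtain $P^*(\beta)/Q'(\beta) < 0$ and $P^*(1/\beta)/Q'(1/\beta) > 0$.

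Next I would rewrite these in terms of $T$ and eliminate $P^*$. Since $Q(z)=(z-1)T(z)$ and $T(\beta)=T(1/\beta)=0$, we have $Q'(\beta)=(\beta-1)T'(\beta)$ and $Q'(1/\beta)=(1/\beta - 1)T'(1/\beta)$. For $P^*$, use $P^*(z)=z^m P(1/z)$, so $P^*(\beta)=\beta^m P(1/\beta)$ and $P^*(1/\beta)=\beta^{-m}P(\beta)$. Substituting: $\beta^m P(1/\beta) / ((\beta-1)T'(\beta)) < 0$ and $\beta^{-m}P(\beta)/((1/\beta-1)T'(1/\beta)) > 0$. Since $\beta>1$, the factor $\beta^m>0$, the factor $\beta-1>0$, and $1/\beta - 1 < 0$; clearing these positive/negative factors turns the first inequality into $P(1/\beta)/T'(\beta) < 0$ and the second into $P(\beta)/T'(1/\beta) < 0$, i.e. $P(\beta)T'(1/\beta) < 0$. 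Finally one relates $T'(\beta)$ and $T'(1/\beta)$ via reciprocity of $T$: differentiating $T(z)=z^m T(1/z)$ gives $T'(z) = m z^{m-1}T(1/z) - z^{m-2}T'(1/z)$, and evaluating at $z=\beta$ where $T(1/\beta)=0$ yields $T'(\beta) = -\beta^{m-2}T'(1/\beta)$. Hence $T'(\beta)$ and $T'(1/\beta)$ have opposite signs (both nonzero since the roots are simple). Feeding this into $P(1/\beta)/T'(\beta)<0$ gives $P(1/\beta)T'(1/\beta) > 0$... wait — more carefully: $P(1/\beta)/T'(\beta) < 0$ and $T'(\beta) = -\beta^{m-2}T'(1/\beta)$ give $P(1/\beta)T'(1/\beta) < 0$, which is~(i); and $P(\beta)T'(1/\beta) < 0$ with the same substitution gives $P(\beta)T'(\beta) > 0$, which is~(ii).

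The main obstacle is getting the sign bookkeeping through cleanly: the two branches sit on opposite sides of the unit circle, $\beta-1>0$ while $1/\beta-1<0$, and $T'(\beta)$, $T'(1/\beta)$ have opposite signs, so every step flips a sign and it is easy to lose track. The conceptual content is light — it is entirely a matter of carefully propagating the one robust fact (Rouch\'e plus continuity of the branches) through the explicit formulas for $Q'$, $P^*$, and the reciprocity relation for $T'$. I would present the two cases $\alpha=\beta$ and $\alpha=1/\beta$ in parallel to minimize errors.
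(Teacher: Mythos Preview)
There is a genuine gap in the sign determination. Rouch\'e together with continuity only tells you that the branch through $1/\beta$ stays in $|z|<1$ and the branch through $\beta$ stays in $|z|>1$; it does \emph{not} give $|h_{1/\beta}(t)|<1/\beta$ or $|h_\beta(t)|>\beta$. From $|h_\beta(t)|>1$, $h_\beta$ real near $t=1$, and $h_\beta(1)=\beta$, the branch could perfectly well approach $\beta$ from below (staying in $(1,\beta)$), so the sign of $h'_\beta(1)$ is undetermined by your argument. In fact the correct sign is $h'_\beta(1)>0$, the opposite of what you assert. (Your final answer for~(i) comes out right only because this error cancels against a sign slip in your last paragraph: from $P(1/\beta)/T'(\beta)<0$ and $T'(\beta)=-\beta^{m-2}T'(1/\beta)$ one gets $P(1/\beta)T'(1/\beta)>0$, not $<0$.)

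The paper supplies the missing idea as follows. For $\alpha=1/\beta$: since $\deg Q_t=m+1$ is odd, $Q_t\in\rb[z]$, and $Q_t$ has a \emph{unique} root in $D(0;1)$ for $0\le t<1$, this root must be real by complex-conjugation symmetry; hence the branch is real on all of $[0,1]$, starts at $0$ (the root of $zP(z)$ inside the disc), and ends at $1/\beta$. This pins down $h'_{1/\beta}(1)>0$. For $\alpha=\beta$ one does not argue directly but uses the inversion relation from the proof of Theorem~\ref{QtoP}: $h_\beta(t)=1/h_{1/\beta}(1/t)$, which upon differentiation at $t=1$ gives $h'_\beta(1)=\beta^2\,h'_{1/\beta}(1)>0$. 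From $h'_\beta(1)=P^*(\beta)/Q'(\beta)=\beta P(\beta)/((\beta-1)T'(\beta))>0$ one reads off $P(\beta)T'(\beta)>0$ directly, and from $h'_{1/\beta}(1)>0$ together with $P^*(1/\beta)=(1/\beta)P(1/\beta)$ one gets $P(1/\beta)T'(1/\beta)<0$, without any detour through the reciprocity relation for $T'$.
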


\begin{proof}
Let $Q_t$ and $h_{\alpha}$ defined as above in \eqref{def_de_Qt} 
and \eqref{taylor} respectively.
For $\displaystyle \alpha \in \{ \frac{1}{\beta},\beta\}$, the relation \eqref{InvProp} gives : 
 $\displaystyle \widetilde{\alpha}=\frac{1}{\alpha}$ 
and then $1/\overline{h_{\alpha}(t)} = h_{\frac{1}{\alpha}}(\frac{1}{t})$.
 
(i) {\it Case $\alpha=1/\beta$} : for $0 \leq t \leq 1, \,\deg(Q_t(z))=m+1 \geq 5$ is odd. 
 As $Q_t(z) \in \rb \left[ z \right], \, Q_t$ has at least one real root  
and pairs of complex-conjugated roots. 
Since it admits only one root in $D(0;1)$ this zero is real for symmetry reasons; 
and the branch starting at $z=0$ and ending at $\displaystyle z=1/\beta$ is included in $\rb$ (i.e :  
 $\displaystyle h_{\frac{1}{\beta}}(t) \in \rb$ for $0<t<1$). 
 Then, $\displaystyle h'_{\frac{1}{\beta}}(1)=
P^*(\frac{1}{\beta}) / Q'(\frac{1}{\beta}) > 0 $ implying 
 $ \displaystyle P(\frac{1}{\beta}) Q'(\frac{1}{\beta}) > 0 $.
Since $\displaystyle Q'(\frac{1}{\beta})=(\frac{1}{\beta} - 1) T'(\frac{1}{\beta})$  
we readily obtain: 
 $P(\frac{1}{\beta})T'(\frac{1}{\beta}) <0$.

(ii) {\it Case $\alpha=\beta$:} 
we have $\displaystyle h_{\beta}(t)=1/\overline{h_{\frac{1}{\beta}}(\frac{1}{t})}
=1/h_{\frac{1}{\beta}}(\frac{1}{t})$. On differentiation we have: 
$\displaystyle h'_{\beta}(t)= - \frac{-1}{t^2} h'_{\frac{1}{\beta}}(\frac{1}{t})/(h_{\frac{1}{\beta}}(\frac{1}{t}))^2 $;
therefore, for $t=1$ :  
 $\displaystyle h'_{\beta}(1)= h'_{\frac{1}{\beta}}(1)/(h_{\frac{1}{\beta}}(1))^2 
=\beta^2 \frac{P^*(\frac{1}{\beta})}{Q'(\frac{1}{\beta})} = \beta^{2} h'_{\frac{1}{\beta}}(1).$
 The two nonzero real numbers 
$\displaystyle h'_{\beta} (1)$ and $h'_{\frac{1}{\beta}}(1)$
are simultaneously positive or negative.
They are positive. 
As $\displaystyle h'_{\beta} (1) = P^*(\beta)/Q'(\beta) = 
\beta P(\beta)/(\beta-1)T'(\beta) = \frac{\beta}{\beta - 1} 
\frac{P(\beta)^2}{ P(\beta) T'(\beta)}$, we deduce: 
$  P(\beta) T'(\beta) > 0.$
\end{proof}
%~~~~~~~~~~~~~~~~~~~~~~~~~~~~~~~~~~~~~~~~~~~~~~~~~~~~~~~~~~~~~~~~~~~~~~~~~~~~~~~~~~~~~~~~~~~~~~~~~~~~~~~~~~~~~~~

\subsubsection{Proof of Theorem \ref{main1} for an irreducible Salem polynomial}%Existence of an expansive polynomial}
\label{sec:SS4.2S2}

Let $T$ be an irreducible Salem polynomial of degree $m \geq 4$. 
Denote $\displaystyle T(z):=z^m+t_{1} z^{m-1} + ...t_{\frac{m}{2}-1} z^{\frac{m}{2}+1} + 
t_{\frac{m}{2}} z^{\frac{m}{2}} + t_{\frac{m}{2}-1} z^{\frac{m}{2}-1} ... + t_1 z + t_0$ 
and $\displaystyle P(z):=z^m+p_{m-1} z^{m-1} + ... + p_1 z + p_0$. 
Though $P$ is nonreciprocal, only half of the coefficient
vector $(p_i)_{i=0,\ldots,m-1}$ entirely determines $P$:
indeed, using the fact that $T$ is reciprocal, the polynomial identity \eqref{main1eq}
gives the following $m/2$ relations between the coefficients:
\begin{equation}
\label{lienPiTi}
 p_{m-i} = p_{i-1} + t_{i}-t_{i-1}, \qquad 1 \leq i \leq \frac{m}{2}.
\end{equation}
The problem of the existence of $P$ is then reduced to finding
a $m/2$-tuple of integers $(p_i)_{i=0,\ldots,m/2 - 1} \in \zb^{m/2}$, characterizing
the ``point" $P$ in the lattice $\zb^{m/2}$, satisfying
the $m/2$ conditions of the Criterion of expansivity of Theorem \ref{QtoP}.
In terms of the coefficient vectors these conditions are linear, each of them
determining an affine hyperplane in $\rb^{m/2}$. The Criterion of expansivity
means that the ``point" $P$ should lie in the intersection of
the $m/2$ open half-spaces defined by these hyperplanes. Let us call
this intersection {\it admissible cone}. We will show that this facetted
polyhedral cone is nonempty; hence it will contain infinitely many
points of the lattice $\zb^{m/2}$. In other terms the number of monic
expansive polynomials $P$ lying over $T$ will be shown to be infinite.

Let us make explicit the equations of the delimiting hyperplanes of the cone,
from Theorem \ref{QtoP}, using \eqref{lienPiTi}.

\vspace{0.1cm}

\noindent
{\it The half-space given by $P(1)T(1) < 0$:}
the condition at $z=1$ is    
\begin{equation}
\label{P1T1facette}
P(1)T(1)=\sum_{i=0}^m p_i \sum_{i=0}^m t_i
=\sum_{i=0}^{\frac{m}{2}-1}  2(2\sum_{k=0}^{\frac{m}{2}-1} t_{k}  +t_{\frac{m}{2}})  p_{i}
+ t_{\frac{m}{2}} (2 \sum_{k=0}^{\frac{m}{2}-1} t_{k}  +t_{\frac{m}{2}}) < 0.
\end{equation}
Combining terms this inequality can be written: 
$\lambda_0 +  \sum_{i=0}^{\frac{m}{2}-1}  a_{0,i} p_i < 0$ with 
$a_{0,0}, a_{0,1}, \ldots, a_{0,\frac{m}{2}-1}, \lambda_0 \in \zb $ only
functions of the coefficients $t_i$ of $T$.

\vspace{0.1cm}

\noindent
{\it The half-space given by $(\alpha-1)\alpha^{1-m}P(\alpha)T'(\alpha)<0$:}
the condition at $z=\alpha \neq 1, |\alpha|=1$, a zero of $T$, is
$ (\alpha-1) \alpha^{1-m} (\sum_{i=0} ^{m} p_i \alpha^i) (\sum_{i=1}^{m} i t_i \alpha^{i-1} )<0$, i.e.
  $$(\alpha - 1)(\alpha^{\frac{m}{2}} + \alpha^{\frac{m}{2}-1} + t_{\frac{m}{2}})(\frac{m}{2}t_{\frac{m}{2}} 
 + \sum_{k=0}^{\frac{m}{2}-1}(k \alpha^{k-\frac{m}{2}} + (m-k)\alpha^{\frac{m}{2}-k}))$$
 \begin{equation}
\label{ALPHAfacette}
+ \sum_{i=0}^{\frac{m}{2}-1} (\alpha^{i-\frac{m}{2}}+\alpha^{\frac{m}{2}+1-i})(\alpha-1)(\frac{m}{2}t_{\frac{m}{2}}+ 
 \sum_{k=0}^{\frac{m}{2}-1} t_k (k\alpha^{k-\frac{m}{2}} + (m-k)\alpha^{\frac{m}{2}-k}))p_i < 0.
\end{equation}
Combining terms this inequality can be written: 
 $\lambda_{\alpha} +  \sum_{i=0}^{\frac{m}{2}-1}  a_{\alpha,i} p_i < 0$ 
with $a_{\alpha,0}, a_{\alpha,1}, \ldots, a_{\alpha,\frac{m}{2}-1}, \lambda_{\alpha}$ 
real in the algebraic number field which is the splitting field
of the polynomial $T$.
The set of solutions of the following linear system of inequalities 
in $\displaystyle \rb^{\frac{m}{2}}$: 
 \[
\label{affineFACETTAGE}
 \begin{cases}  
  \lambda_0 +   a_{0,0} p_0 +a_{0,1} p_1 +...a_{0,{\frac{m}{2}-1}} p_{\frac{m}{2}-1}   & < 0, \\
   \lambda_1 +   a_{1,0} p_0 +a_{1,1} p_1 +...a_{1,{\frac{m}{2}-1}} p_{\frac{m}{2}-1}   & < 0, \\
   ... \\
    \lambda_{\frac{m}{2}-1} +   a_{{\frac{m}{2}-1},0} p_0 +a_{{\frac{m}{2}-1},1} p_1 +...a_{{\frac{m}{2}-1},{\frac{m}{2}}-1} p_{\frac{m}{2}-1}   & < 0, 
 \end{cases}
 \] 
is the admissible cone $\emph{C}^{+}$. 
Let us show that it is nonempty.  
We have just to show that the face hyperplanes $\emph{H}_i$ defined by the equations 
 $\lambda_i +   a_{i,0} p_0 +a_{i,1} p_1 +...a_{i,{\frac{m}{2}-1}} p_{\frac{m}{2}-1}   = 0$ 
intersect at a unique point.
 The linear system of equations corresponding to $\bigcap_{i=0}^{\frac{m}{2}-1} \emph{H}_i$ 
is equivalent to 
  \[
 \begin{cases}  
  P(1)T(1)&=0,\\
  (\alpha_1-1) \alpha_1^{1-m} P(\alpha_1) T'( \alpha_1 )&=0, \\
   ... \\
    (\alpha_{\frac{m}{2}-1}-1) \alpha_{\frac{m}{2}-1}^{1-m} P(\alpha_{\frac{m}{2}-1}) T'( \alpha_{\frac{m}{2}-1} )&=0, 
 \end{cases}
 \]
  where $\alpha_i$ are the simple zeros of $T$ of modulus $1$.
Obviously this system is equivalent to
$   P(1) = P(\alpha_1) = \ldots = P(\alpha_{\frac{m}{2}-1}) =0$,
since $T$, being irreducible, 
has simple roots and that $T(1) \neq 0$. 
The only monic polynomial $P$ in $\rb\left[ z \right]$, of degree $m$, which vanishes
at  
$\displaystyle  \{1, \alpha_1, ..., \alpha_{\frac{m}{2}-1} \}$
is of the form $P_0(z)=(z-1)(z-r) \prod_{i=1}^{\frac{m}{2}-1} (z^2-(\alpha_i+\overline{\alpha_i})z+1)$. 
From \eqref{PtoT},
$P_0$ satisfies $P_0(\beta)= \beta^{m-1} P_0(1/\beta)$, thus $r=\frac{1}{2}(\beta + 1 / \beta)$.
  The half-coefficient vector 
$\displaystyle \mathcal{M}_0=\mbox{}^{t}(p_i)_{0 \leq i \leq  {\frac{m}{2}-1}}$ of $P_0$  
is the unique solution of this system of equations 
(the notation $\mathcal{M}_t$ is that of
\S \ref{sec:SS4.5S2}). 
Remark that $p_0=\frac{1}{2}(\beta+1 / \beta)$.
The point $\mathcal{M}_0$ is the summit vertex of the admissible cone $\emph{C}^{+}$. 
Since the admissible cone is nonempty, it
contains infinitely many points of $\zb^{m/2}$. Hence the claim.

 \begin{proposition}
\label{EqFINITE}
 Let $q \geq 2$ be an integer and $T$ an irreducible 
 Salem polynomial of degree $m$. 
  The subset of $E_q$ of the monic expansive polynomials $P$ over $T$, 
  of degree $m$, such that $P(0)=q$, is finite.
 \end{proposition}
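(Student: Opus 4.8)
The plan is to show that the monic expansive polynomials $P$ over $T$ with $P(0)=q$ form a bounded subset of the lattice $\zb^{m/2}$ of half-coefficient vectors, hence a finite set. Recall from the proof of Theorem \ref{main1} that such a $P$ is entirely determined by its half-coefficient vector $\mathcal{M} = \mbox{}^{t}(p_0,\ldots,p_{m/2-1})$, and that this vector must lie in the interior of the admissible cone $\emph{C}^{+}$, which is a pointed polyhedral cone with apex $\mathcal{M}_0$. So the claim reduces to proving that an extra constraint — namely $p_0 = q$ — cuts $\emph{C}^{+}$ down to a bounded region.

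First I would fix the value of the constant term. Since $P(0)=p_0$ and $P$ is monic and expansive, $q = p_0 = \prod_j \alpha_j$ where the $\alpha_j$ are the roots of $P$, all of modulus $>1$; this is consistent with $q\ge 2$ (cf. Remark \ref{qdessus2}). Adding the affine hyperplane $\{p_0 = q\}$ to the system of $m/2$ strict inequalities defining $\emph{C}^{+}$ gives a subset $\emph{C}^{+} \cap \{p_0=q\}$ of an affine space of dimension $m/2 - 1$. I would then argue that $\emph{C}^{+} \cap \{p_0 = q\}$ is bounded. The key point is that the apex $\mathcal{M}_0$ has first coordinate $p_0 = \tfrac12(\beta + 1/\beta) \ne q$ in general (indeed $\tfrac12(\beta+1/\beta)$ is typically irrational and in any case lies strictly between $1$ and $\beta$), so the slice $\{p_0 = q\}$ does not pass through the apex. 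A pointed cone intersected with an affine hyperplane missing its apex is bounded provided the hyperplane meets the cone only in a compact cross-section — equivalently, provided the direction $e_0 = (1,0,\ldots,0)$ is not in the closure of the cone's recession directions that lie parallel to $\{p_0 = q\}$. Concretely: if $\emph{C}^{+}\cap\{p_0=q\}$ were unbounded, it would contain a ray $\mathcal{M}_1 + s\, v$, $s\ge 0$, with $v \ne 0$, $v_0 = 0$, and $v$ a recession direction of $\emph{C}^{+}$; but then $\mathcal{M}_0 + s v$ would also lie in $\overline{\emph{C}^{+}}$ for all $s\ge 0$, giving a whole line's worth of polynomials $P_s = P_0 + sv$ with $P_s(0) = \tfrac12(\beta+1/\beta)$ fixed and $P_s(1) = P_0(1) + s\,v(1)$; since $P_0(1)=0$ and $P_0$ sits on the boundary, pushing along a recession direction keeps $P_s(1)T(1) \le 0$, and one checks (using the explicit linear forms \eqref{P1T1facette} and \eqref{ALPHAfacette}) that the only common recession direction of all $m/2$ half-spaces is $0$ — because the $m/2$ bounding hyperplanes $\emph{H}_i$ meet in the single point $\mathcal{M}_0$, so their normals span $\rb^{m/2}$ and the cone is pointed, hence has no nonzero recession direction at all. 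Thus already $\emph{C}^{+}$ is... no: a nonempty pointed polyhedral cone with apex can still be unbounded (it is a cone!). The correct statement is that its recession cone is pointed, and intersecting with $\{p_0=q\}$, a hyperplane not through the apex and not parallel to any extreme ray, yields a bounded polytope.

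So the heart of the argument is: the recession cone of $\emph{C}^{+}$ (i.e.\ the cone $\{v : a_{i,\cdot}\!\cdot v \le 0,\ i=0,\ldots,m/2-1\}$) contains no nonzero vector with $v_0 = 0$. I would prove this by showing $e_0$ lies in the interior of the dual cone, or directly: if $v \ne 0$ is a recession direction with $v_0 = 0$, consider the polynomial $V(z)$ with half-coefficient vector $v$ (extended via the linear relations \eqref{lienPiTi} with the $t_i$-terms dropped, since those are absorbed into $\mathcal{M}_0$); then $V$ has $V(0)=0$ and, being a recession direction, $V(1)T(1) \le 0$ and $(\alpha-1)\alpha^{1-m}V(\alpha)T'(\alpha) \le 0$ for each root $\alpha$ of $T$ on the circle. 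For $s$ large, $P_0 + sV$ would then be a real polynomial of degree $m$ vanishing at $1, \alpha_1, \ldots, \alpha_{m/2-1}$ to leading order and staying (weakly) expansive along the ray — but its constant term is pinned at $\tfrac12(\beta+1/\beta)$ while its roots must have product of modulus $\ge$ something growing, a contradiction; alternatively and more cleanly, the $m/2$ equations $P(1)=P(\alpha_1)=\cdots=P(\alpha_{m/2-1})=0$ already have a unique monic solution $P_0$, so the associated homogeneous system (for the direction $v$, with the monic normalization removed) has only the trivial solution, forcing $v=0$.

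The main obstacle I anticipate is making rigorous the passage between the geometric statement "slice of a pointed polyhedral cone by an affine hyperplane not through the apex is bounded" and the concrete arithmetic content — in particular verifying that the constraint $p_0 = q$, together with the $m/2$ expansivity inequalities, genuinely forces boundedness rather than merely finiteness of some coordinates; this is where one must invoke that the $m/2$ bounding hyperplanes $\emph{H}_0, \emph{H}_1, \ldots, \emph{H}_{m/2-1}$ have linearly independent normal vectors (shown in the proof of Theorem \ref{main1}, since they meet in the single point $\mathcal{M}_0$), so that $\emph{C}^{+}$ is a simplicial cone; a simplicial cone in $\rb^{m/2}$ with apex $\mathcal{M}_0$, sliced by $\{p_0 = q\}$ with $q \ne (\mathcal{M}_0)_0$, is a bounded simplex precisely when $e_0^{*}$ (the coordinate functional) is positive on all $m/2$ extreme rays or negative on all of them — and one must check the sign of $p_0$ along each edge of the cone. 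Once that sign check is done (the edges emanate from $\mathcal{M}_0$ into the half-space $P(1)T(1)<0$, and the leading behaviour of $p_0$ along each edge is controlled by the corresponding entry of the inverse of the matrix $(a_{i,j})$), boundedness follows, and a bounded region contains only finitely many lattice points, giving the Proposition. I would close by remarking that this also shows the finiteness is uniform: $|E_q \cap \{P \text{ over } T\}|$ is bounded in terms of the volume of that simplex, which grows polynomially in $q$.
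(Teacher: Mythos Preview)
Your approach is considerably more elaborate than what the paper does, and it leaves the decisive step unfinished. The paper's proof is a single sentence: it cites Burcsi's result that the total number of monic integer expansive polynomials of degree $m$ with $|P(0)|=q$ is finite, bounded explicitly by $\prod_{k=1}^{m-1}(2B(q,m,k)+1)$ where $B(q,m,k)=\binom{m-1}{k-1}+q\binom{m-1}{k}$. This has nothing to do with $T$: finiteness already holds for \emph{all} monic expansive polynomials with prescribed constant term, not just those lying over a particular Salem polynomial.

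The underlying reason is elementary and worth internalizing. If $P(z)=\prod_{j=1}^m(z-\alpha_j)$ is monic expansive with $\prod_j|\alpha_j|=q$, then for each $j$ one has $|\alpha_j|=q/\prod_{i\ne j}|\alpha_i|<q$, so every root lies in the annulus $1<|\alpha|<q$. The coefficients $p_{m-k}$, being elementary symmetric functions of the $\alpha_j$, are therefore bounded in absolute value by constants depending only on $m$ and $q$. The coefficient vector thus sits in a bounded box in $\zb^m$, and finiteness is immediate.

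Your cone-slicing argument aims at the same conclusion through the geometry of $\emph{C}^{+}$, but the step you yourself flag as the obstacle---checking that the linear functional $p_0$ has a consistent sign along every extreme ray of the simplicial cone, so that the slice $\{p_0=q\}$ is a bounded simplex---is not carried out, and is not obviously readable off from \eqref{P1T1facette} and \eqref{ALPHAfacette}. (Note too that your intermediate claim ``a pointed polyhedral cone sliced by a hyperplane missing the apex is bounded'' is false in general; the sign condition on the extreme rays is exactly what is needed, as you eventually recognize.) The quickest way to \emph{complete} your argument would be to invoke the root bound above, at which point the cone geometry becomes superfluous. So the proposal is not wrong in spirit, but it trades a one-line elementary bound for an unfinished polyhedral computation.
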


\begin{proof}
In \cite{burcsi}, Burcsi proved that the number of expansive polynomials of the form $z^m + p_{m-1} z^{m-1} + ... + p_1 z + p_0 \in \zb[z]$ with $|p_0|=q$  is finite 
and it is at most $\prod_{k=1} ^{m-1} (2 B( p_0, m, k) + 1)$ with $B( p_0, m, k) = \binom{m-1}{k-1} + |p_0| \binom{m-1}{k}$.
\end{proof}

%~~~~~~~~~~~~~~~~~~~~~~~~~~~~~~~~~~~~~~~~~~~~~~~~~~~~~~~~~~~~~~~~~~~~~~~~~~~~~~~~~~~~~~~~~~~~~~~~~~~~~~~~---------------------~~~~~~~
\subsubsection{Existence of an expansive polynomial over a nonirreducible 
Salem polynomial or a cyclotomic polynomials}
\label{sec:SS4.3S2}

We now extend the existence Theorem \ref{main1} 
to monic expansive polynomials
lying over nonirreducible Salem polynomials or
cyclotomic polynomials only. The proofs, of similar nature as 
in \S \ref{sec:SS4.1S2} and in \S \ref{sec:SS4.2S2}, 
are left to the reader: they include a Criterion of expansivity,  
as a first step (Theorem \ref{critt2.4.3}), then imply the nonemptyness of a certain admissible 
open cone of solutions in a suitable Euclidean space. 
Let us remark that the construction method which is followed 
calls for expansive polynomials of even degrees, and for 
a LHS term in \eqref{eqSaProdCyclo} (as in \eqref{PtoT}) 
which has factors of multiplicity one
in its factorization (case of simple roots).    

\begin{theorem}
\label{critt2.4.3}
Let $T \in \zb[z]$, with simple roots 
$\not\in \{\pm 1\}$, of degree $m \geq 4$,
be either a cyclotomic polynomial or a Salem polynomial.
Let $P \in \zb[z]$ of degree $m$ such that 
\begin{equation}
\label{eqSaProdCyclo}
   (z-1)\, T(z) =  z P(z) - P^*(z). 
\end{equation}
\noindent
Then $P$ is an expansive polynomial if and only if
 \begin{itemize}
  \item[(i)]  $\displaystyle P(1) T(1) < 0$,
  \item[(ii)] for every zero $\alpha$ of $T$ of modulus $1$,
    $$(\alpha-1)\alpha^{1-m}P(\alpha)T'(\alpha)~~ \mbox{is real and negative}.$$
 \end{itemize}
\end{theorem}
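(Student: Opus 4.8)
The plan is to mimic closely the proof of Theorem \ref{QtoP}, which already handled the irreducible Salem case, and to check that each step survives the weakening of hypotheses (allowing $T$ cyclotomic, or reducible Salem, with the sole structural requirements that the roots of $T$ be simple and that $\pm 1$ not be among them). First I would set $Q(z) := (z-1)T(z)$ and record its root structure: since $T$ is reciprocal of even degree $m$ (cyclotomic products of the relevant type, or Salem, both have even degree once $\pm1$ is excluded and the roots are simple) and $T(\pm 1)\neq 0$, the polynomial $Q$ is antireciprocal of degree $m+1$, with all roots simple, exactly one root $z=1$ being real on the circle forced by the factor $(z-1)$, and the remaining $m$ roots of $T$ lying on $|z|=1$ in the cyclotomic case, or on $|z|=1$ together with the pair $\beta,\beta^{-1}$ in the Salem case. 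The only genuine difference from \ref{QtoP} is that in the cyclotomic case there is no $\beta$ and \emph{all} of $Z(1)\setminus\{1\}$ sits on the unit circle, so the count of entrances/exits and the Rouché argument must be re-read with that in mind; I would dispatch the two cases in parallel, flagging where they diverge.

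Next, for the necessity direction, I would reproduce verbatim the Rouché argument applied to $Q_t(z)=zP(z)-tP^*(z)$ on $|z|=1$: assuming $P$ expansive forces $|P(z)|=|P^*(z)|\neq 0$ on the circle, so $Q_t$ and $zP(z)$ have the same number of zeros in $\overline{D(0;1)}$ for $0\le t<1$, namely exactly one (the zero $z=0$ moving to whatever root of $Q$ lies inside the disk). Then the local-inversion / Taylor-expansion machinery around each unit-modulus root $\alpha$ of $Q$ goes through unchanged — the key identities $h_\alpha(t)\overline{h_\alpha(1/t)}=1$, $h'_\alpha(1)=\lambda\alpha$ with $\lambda\in\mathbb R$, and $h'_\alpha(1)=P^*(\alpha)/Q'(\alpha)$ — and the sign computation yielding $\alpha^{1-m}P(\alpha)Q'(\alpha)<0$ is formal, needing only $Q(\alpha)=0$, $Q'(\alpha)\neq0$ and $|\alpha|=1$. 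Specializing to $\alpha=1$ gives $P(1)T(1)<0$ via $Q'(1)=T(1)$, and to $\alpha$ a modulus-one root of $T$ gives the stated inequality via $Q'(\alpha)=(\alpha-1)T'(\alpha)$; the conjugate-pairing remark shows these are genuinely $(\alpha,\bar\alpha)$-conditions, so there are $m/2$ of them. In the cyclotomic case one must additionally argue that the single interior root of $Q_t$ travels along the branch from $z=0$ and stays inside, which is exactly the continuity-of-branches argument already used.

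For sufficiency I would again follow \ref{QtoP}: first show $P$ has no root on $|z|=1$ — if $P(\alpha)=0$ with $|\alpha|=1$ then $\bar\alpha$ is also a root (as $P\in\mathbb R[z]$), whence $Q(\alpha)=\alpha P(\alpha)-\alpha^m P(\bar\alpha)=0$, forcing $\alpha\in\{1\}\cup\{\text{unit roots of }T\}$, each of which is excluded by the hypothesized strict inequalities evaluated at a zero of $P$. Then the two sign hypotheses guarantee $|h_\alpha(t)|>1$ for $t<1$ near $1$ at each unit root $\alpha$ of $Q$, so the $m-1$ (Salem case: $m-2$ circle branches plus the $\beta$-branch) branches ending at the circle roots of $Q$ come from roots of $zP(z)$ outside $\overline{D(0;1)}$, while no root of $Q_t$ crosses the circle for $0<t<1$; hence $zP(z)$ has exactly one root in the disk, namely $z=0$, and $m-1$ (resp. $m$) roots strictly outside — and since $P(\pm1)\neq0$ from the strict inequalities, $P$ itself is expansive. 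The main obstacle, and the one I would be most careful about, is the bookkeeping in the cyclotomic case: there $Q$ has \emph{no} root of modulus $>1$, so one must verify that the Rouché count still forces precisely one interior root for $0\le t<1$ and that the "expected" interior endpoint at $t=1$ is $z=1$ rather than some other circle root, i.e. that the branch from $z=0$ does not drift onto the circle prematurely; this is why the hypotheses exclude $\pm1$ as roots of $T$ and demand simple roots, and it is the place where the proof genuinely differs from the reciprocal-Salem template. Since the paper explicitly leaves these details to the reader, I would present the above as a guided outline, emphasizing that every formula from the proof of Theorem~\ref{QtoP} transfers mutatis mutandis and only the disk-root count needs the separate cyclotomic reading.
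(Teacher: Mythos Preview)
Your plan matches the paper's own approach exactly---the authors explicitly leave the proof ``to the reader'' and say it is ``of similar nature'' to that of Theorem~\ref{QtoP}---and for a (possibly reducible) Salem polynomial $T$ everything you outline goes through unchanged: the interior branch from $z=0$ ends at $1/\beta$, so $z=1$ and every unit-modulus root of $T$ are entrances, and both directions of the biconditional follow just as in the irreducible case.

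The cyclotomic case, however, is not just a matter of bookkeeping: the necessity direction actually \emph{fails} there, and this is precisely the point you flagged without resolving. If $T$ is cyclotomic then $Q=(z-1)T$ has all $m+1$ roots on $|z|=1$; when $P$ is expansive the single interior root of $Q_t$ is real (your own odd-degree/conjugate-pair argument), stays in $(-1,1)$ for $0<t<1$, and since $T(\pm1)\neq 0$ it must terminate at the unique real circle root of $Q$, namely $z=1$. Hence $z=1$ is an \emph{exit}, giving $\lambda>0$ there and forcing $P(1)T(1)>0$, contradicting~(i). Concretely, take $T=\Phi_5=z^4+z^3+z^2+z+1$ and $P=z^4+2z^3+2z^2+2z+2$: one checks $(z-1)T=zP-P^*=z^5-1$, and $P$ is expansive by Schur--Cohn--Lehmer ($\mathcal{T}^k(P)(0)=3,5,21,405$), yet $P(1)T(1)=9\cdot 5=45>0$. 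In fact (i) and (ii) together would make all $m+1$ circle roots of $Q$ entrances, which is incompatible with the interior root $z=0$ of $Q_0=zP$ having to go somewhere; so (i)$\wedge$(ii) is impossible for cyclotomic $T$ and the biconditional cannot hold as stated. The correct criterion in that case is ``$P$ expansive $\iff$ (ii) alone'', which still yields $m/2$ linear conditions in $m/2$ unknowns and a nonempty admissible cone---so Theorem~\ref{main1} survives, but Theorem~\ref{critt2.4.3} as written does not.
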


Theorem \ref{main1} is a consequence of
Theorem \ref{critt2.4.3}.
 
%-------------------------------------------------------------------------------------------------------------
\subsubsection{The second class of Salem numbers in $A_q$}
\label{sec:SS4.4S2}

Given an integer $q \geq 2$, the second class of 
Salem numbers $\beta$ in $A_q$ is provided by
expansive polynomials $P$ for which $P(0)=-$M$(P) = -q$ lying above the
minimal polynomial $T$ of $\beta$, referring to Theorem \ref{theoremA}. 
From \eqref{EqDeQ1} this case corresponds to $\epsilon = +1$, with 
$P_{1}(z) = (z-1)P(z)$, and then to
\begin{equation}
\label{niouEQ}
Q(z)=Q_{1}(z) = z P(z) + P^{*}(z).
\end{equation}
Since the RHS of \eqref{niouEQ} vanishes at $z=-1$ when $\deg(P)$ is even, 
a natural factorization
of $Q$ in \eqref{niouEQ} is $Q(z)=(z+1) Q_{2}(z)$, with $Q_2$ a Salem polynomial of even degree.
Then, instead of the equation \eqref{EqDeQ1}, this second class of Salem numbers
calls for association theorems between $Q_2$ and $P$ using
\begin{equation}
\label{niouEQfactor}
(z+1) Q_{2}(z) = z P(z) + P^{*}(z).
\end{equation} 
The association Theorem \ref{caseTWO} between a 
Salem polynomial and an expansive polynomial
of even degree
we obtain in this case can be deduced from the preceding ones, 
by suitable sign changes, as analogues of Theorem \ref{main1}.  

\begin{theorem}
\label{caseTWO}
Let $T \in \zb[z]$, with simple roots
$\not\in \{\pm 1\}$ of degree $m \geq 4$, be either 
a cyclotomic polynomial or a Salem polynomial.
Then there exists a monic expansive polynomial
$P \in \zb[z]$ of degree $m$ such that
\begin{equation}
\label{main1eqplus}
(z+1) T(z) = z P(z) + P^{*}(z).
\end{equation}
\end{theorem}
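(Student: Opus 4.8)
The plan is to mimic the argument of Theorem \ref{main1} (equivalently Theorem \ref{critt2.4.3}), transporting it through the sign change $\epsilon=-1 \rightsquigarrow \epsilon=+1$. First I would reduce to the factored equation \eqref{niouEQfactor}: since $\deg P = m$ is even and $P^*(z)=z^m P(1/z)$, the RHS $zP(z)+P^*(z)$ vanishes at $z=-1$, so writing $Q(z)=zP(z)+P^*(z)=(z+1)Q_2(z)$ is legitimate, and \eqref{main1eqplus} is equivalent to $(z+1)T(z)=zP(z)+P^*(z)$ with $T$ playing the role of $Q_2$. The key structural fact to re-establish is an analogue of the Criterion of expansivity (Theorem \ref{QtoP}): $P$ satisfying \eqref{main1eqplus} is expansive if and only if certain sign conditions hold at $z=-1$ and at the roots of $T$ on the unit circle. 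The proof is the same Rouch\'e-plus-local-inversion argument: on $|z|=1$ one has $|zP(z)|=|P(z)|=|P^*(z)|$, so $Q_t(z):=zP(z)+tP^*(z)$ has, for $0<t<1$, the same zero count in $\overline{D(0;1)}$ as $zP(z)$; the branches $z=h_\alpha(t)$ emanating from the unit-circle zeros of $Q=Q_1$ are controlled by the sign of $h'_\alpha(1)$, which one computes to be $-P^*(\alpha)/Q'(\alpha)$ (note the sign flip coming from $\epsilon=+1$), and the inversion symmetry $Q_{1/t}(1/z)=\pm z^{-(m+1)}Q_t(z)$ again forces $h'_\alpha(1)=\lambda\alpha$ with $\lambda\in\R$ for $|\alpha|=1$.

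Second, I would write out the sign conditions explicitly. At $z=-1$: since $Q(z)=(z+1)T(z)$, we get $Q'(-1)=T(-1)$, and the expansivity condition becomes $(-1)^{1-m}P(-1)T(-1)<0$, i.e. $-P(-1)T(-1)<0$ (using $m$ even), i.e. $P(-1)T(-1)>0$. At a root $\alpha\neq -1$, $|\alpha|=1$, of $T$: since $Q'(\alpha)=(\alpha+1)T'(\alpha)$, the condition reads $(\alpha+1)\alpha^{1-m}P(\alpha)T'(\alpha)<0$, which is again a real quantity paired with its complex conjugate. (One should also note the root $z=1$ is now a root of $Q_2=T$ only if $T(1)=0$, excluded by hypothesis since roots of $T$ avoid $\{\pm1\}$, so $z=1$ does not enter; symmetrically the branch behaviour at $\beta,1/\beta$ in the Salem case is handled exactly as in Proposition \ref{QtoPplusplus} with the sign change, and is not needed for existence.)

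Third, I would establish existence exactly as in \S\ref{sec:SS4.2S2}: the identity \eqref{main1eqplus} together with reciprocity of $T$ gives $m/2$ linear relations $p_{m-i}=-p_{i-1}+(\text{something in the }t_j)$ expressing half the coefficient vector of $P$ in terms of the other half, so $P$ is determined by $(p_0,\dots,p_{m/2-1})\in\R^{m/2}$. Each of the $m/2$ expansivity inequalities (one at $z=-1$, one for each conjugate pair of unit-circle roots of $T$) is affine-linear in these coordinates, cutting out an open half-space; their intersection is an \emph{admissible cone} $C$. To see $C\neq\emptyset$ it suffices, as before, to check the $m/2$ bounding hyperplanes meet in a single point: the corresponding equalities say $P(-1)=P(\alpha_1)=\cdots=P(\alpha_{m/2-1})=0$ (here $T$ need not be irreducible, but it has simple roots and $T(\pm1)\neq0$, so these $m/2$ conditions plus monicity plus the constraint $P(\beta)=\pm\beta^{m-1}P(1/\beta)$ coming from \eqref{main1eqplus} pin down $P$ uniquely as $(z+1)(z-r)\prod(z^2-(\alpha_j+\overline{\alpha_j})z+1)$ in the Salem case, and the factor $(z+1)\prod\Phi$-type product in the cyclotomic case). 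A nonempty open cone in $\R^{m/2}$ with rational (indeed integer, after clearing denominators) defining data contains infinitely many points of $\Z^{m/2}$, giving a monic expansive $P\in\Z[z]$.

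The main obstacle is bookkeeping rather than ideas: one must verify that every sign, every factor of $(-1)^m$, and every occurrence of $(z+1)$ versus $(z-1)$ is transported correctly — in particular that the inequality at $z=-1$ really does become $P(-1)T(-1)>0$ and not $<0$, and that with this orientation the vertex polynomial $P_0$ of the cone is still well-defined (the constant $r=\frac12(\beta+1/\beta)$ computation must be redone from \eqref{main1eqplus}, where the relation between $P(\beta)$ and $P(1/\beta)$ picks up a sign). Once the Criterion of expansivity is correctly stated for $\epsilon=+1$, the rest is a verbatim copy of \S\ref{sec:SS4.2S2}--\S\ref{sec:SS4.3S2}, so — as the authors indicate — the theorem "can be deduced from the preceding ones, by suitable sign changes."
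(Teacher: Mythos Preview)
Your approach is exactly what the paper intends: it gives no detailed proof of Theorem~\ref{caseTWO}, merely stating that it ``can be deduced from the preceding ones, by suitable sign changes, as analogues of Theorem~\ref{main1}.'' Your sketch carries out precisely that transport of the argument from \S\ref{sec:SS4.1S2}--\S\ref{sec:SS4.3S2}.

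One concrete correction in your bookkeeping (which you rightly flagged as the main hazard): since $h'_\alpha(1)=-P^*(\alpha)/Q'(\alpha)$ in the $\epsilon=+1$ case, the inequality $\alpha^{1-m}P(\alpha)Q'(\alpha)<0$ from Theorem~\ref{QtoP} flips to $\alpha^{1-m}P(\alpha)Q'(\alpha)>0$. Hence at $\alpha=-1$ (with $m$ even) the condition is $P(-1)T(-1)<0$, not $>0$, and at unit-circle roots $\alpha$ of $T$ one gets $(\alpha+1)\alpha^{1-m}P(\alpha)T'(\alpha)>0$, not $<0$. This does not affect the existence argument --- the admissible cone is nonempty either way, only the orientation of the half-spaces reverses.

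A cleaner alternative that bypasses all sign-tracking: with $m$ even, the substitution $z\mapsto -z$ turns \eqref{main1eqplus} into \eqref{main1eq} for $\tilde T(z):=T(-z)$ and $\tilde P(z):=P(-z)$; since $\tilde T$ is again a Salem or cyclotomic polynomial with simple roots $\not\in\{\pm1\}$ and $\tilde P$ is expansive iff $P$ is, Theorem~\ref{main1} applies directly to produce $\tilde P$, hence $P$.
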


\noindent
{\bf Notation:} given a Salem polynomial $T(z) = t_m z^m + t_{m-1} z^{m-1}+\ldots+ t_{1} z + t_0$, of degree $m$, 
with $t_{m/2 +1}=t_{m/2 -1}, \ldots, t_{m-1} = t_1 , t_0 = t_m = 1$, we denote by
\begin{equation}
\label{omega1}
\omega_1 : \mbox{}^{t}(t_0 , t_1 , \ldots, t_{m-1}) ~\mapsto~ \mbox{}^{t}(p_0 , p_1 , \ldots, p_{m-1})
\end{equation} 
the (``choice") mapping sending the coefficient vector
of $T$ to that of the monic expansive polynomial 
$P(z) = p_m z^m + p_{m-1} z^{m-1}+\ldots+p_{1} z + p_0$ over it, $p_m = 1$, 
with $P(0)=p_0$ negative or positive, which is chosen such that the half coefficient vector lies 
on the lattice $\zb^{m/2}$ inside
the admissible cone in $\rb^{m/2}$. In this definition we exclude 
the leading coefficients, equal 
to 1. The converse mapping
\begin{equation}
\label{Omega1}
\Omega_1 :  
\mbox{}^{t}(p_0 , p_1 , \ldots, p_{m-1})  ~\mapsto~ \mbox{}^{t}(t_0 , t_1 , \ldots, t_{m-1}), 
\end{equation}
defined by 
$(z-1) T(z) = z P(z) - P^{*}(z)$
if $P(0)=p_0 > 0$, 
or 
$(z+1) T(z) = z P(z) + P^{*}(z)$
if $P(0)=p_0 < 0$, allows to sending 
a monic polynomial $P$ with real coefficients to
a monic polynomial $T$. For any Salem polynomial $T$ with coefficients $t_i$
as above, the identity
$\Omega_1 (\omega_1 (\mbox{}^{t}(t_i))) = \mbox{}^{t}(t_i)$ holds.

%--------------------------------------------------------------------------------------------------------------
\subsubsection{Algorithmic determination of expansive polynomials}
\label{sec:SS4.5S2}

Let $T$ be an irreducible Salem polynomial of degree $m \geq 4$. 
Denote by $A$ the invertible matrix 
$\displaystyle A=(a_{i,j})_{0 \leq i, j \leq \frac{m}{2}-1}$, where the coefficients
$a_{i,j}$ are given by the equations of the face hyperplanes of the admissible cone
in $\rb^{m/2}$, namely \eqref{P1T1facette} and \eqref{ALPHAfacette}. 
Denote by $\Lambda$ be the column vector of 
$\rb ^{\frac{m}{2}}$ defined by $\Lambda=\mbox{}^t(\lambda_0 , \lambda_1 , \ldots , \lambda_{\frac{m}{2}-1 }) $,
where $\lambda_j$ is the constant term of the equation of
the $j$-th face hyperplane, given by \eqref{P1T1facette} and \eqref{ALPHAfacette}. 
The polyhedral admissible cone is defined by the pair
$(A, \Lambda)$; the pair $(A, \Lambda)$ is said to be {\it associated with} $T$. 

For any vector $V$ in $\rb^{m/2}$ let us denote by $\mbox{round}(V)$ the vector of 
$\zb^{m/2}$ the closest to $V$, componentwise 
(i.e. for each component $x$ of $V$ in the canonical basis of $\rb^{m/2}$, 
the nearest integer to $x$ is selected).   

 \begin{proposition}
   \label{Pexistsprop}
 Let $m\geq 4$ be an even integer. 
 Let $T(z) = \sum_{i=0}^{m} t_{m-i} z^i$ be an irreducible Salem polynomial of degree $m$, 
with associated pair $(A = (a_{i,j}), \Lambda = (\lambda_i))$. 
 Let 
\begin{equation}
\label{bconstant}
b := \frac{\sqrt{m}}{2 \sqrt{2}} \max_{0 \leq i \leq \frac{m}{2}-1} 
\Bigl(\sum_{k=0}^{\frac{m}{2}-1} a_{i,k}^2 \Bigr)^{1/2} \,.
\end{equation} 
For $t \geq b$ let $\mathcal{M}_t= \mbox{{\rm round}}( A^{-1} (^t(-t, -t, ..., -t) - \Lambda))
:=\mbox{}^{t}(M_0, \ldots, M_{\frac{m}{2}-1})$.
Then 

(i) the polynomial $P(z)=z^m + p_{m-1} z^{m-1} +...+ p_1 z + p_0$ defined by
\begin{equation}
\label{casesequations}
  \begin{cases}
     p_i=M_i , & \mbox{} \qquad 0 \leq i \leq \frac{m}{2}-1, \\
   p_{m-i}=M_{i-1}+t_i-t_{i-1}, & \mbox{} \qquad 1 \leq i \leq \frac{m}{2}-1,
  \end{cases}
\end{equation}
is an expansive polynomial which satisfies $(z-1) T(z) = z P(z) - P^{*}(z)$;

(ii) the Salem number $\beta$ of minimal polynomial $T$ belongs to
the class $A_{q_b}$ with
\begin{equation}
\label{q_calculexplicite}
q_b = M_0 = \mbox{pr}_0 \mathcal{M}_{b}  
\end{equation} 
possibly to other classes $A_q$ with $q < q_b$.
\end{proposition}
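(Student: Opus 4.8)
The plan is to verify that the lattice point $\mathcal{M}_t$ still lies inside the \emph{admissible cone} $C^{+}$ constructed in \S\ref{sec:SS4.2S2}, after which part (i) is immediate from the Criterion of expansivity (Theorem \ref{QtoP}) and part (ii) from the way the classes $A_q$ are defined. First I would isolate the exact real point $V_t:=A^{-1}\bigl({}^{t}(-t,\dots,-t)-\Lambda\bigr)$. By the definition of the associated pair $(A,\Lambda)$ in \S\ref{sec:SS4.5S2}, $V_t$ is the unique solution of the linear system $\lambda_i+\sum_{k=0}^{m/2-1}a_{i,k}p_k=-t$ for $0\le i\le\tfrac m2-1$; since $t>0$, every face functional of the cone takes the negative value $-t$ at $V_t$, so $V_t$ is strictly interior to $C^{+}$. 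Writing $V_t=\mathcal{M}_0-t\,A^{-1}\,{}^{t}(1,\dots,1)$ shows that, as $t$ increases, $V_t$ runs along the ray issued from the summit vertex $\mathcal{M}_0=-A^{-1}\Lambda$ of $C^{+}$ into its interior (recall from \S\ref{sec:SS4.2S2} that this vertex has first coordinate $\tfrac12(\beta+\beta^{-1})$).

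Next I would control the rounding. Since $\mathcal{M}_t=\mbox{round}(V_t)$, each of the $m/2$ coordinates moves by at most $\tfrac12$, hence $\|\mathcal{M}_t-V_t\|_2\le\tfrac12\sqrt{m/2}=\tfrac{\sqrt m}{2\sqrt2}$. For every face index $i$ I would expand
\[
\lambda_i+\sum_{k=0}^{m/2-1}a_{i,k}(\mathcal{M}_t)_k=-t+\sum_{k=0}^{m/2-1}a_{i,k}\bigl((\mathcal{M}_t)_k-(V_t)_k\bigr)
\]
and estimate the error term by Cauchy--Schwarz,
\[
\Bigl|\sum_{k}a_{i,k}\bigl((\mathcal{M}_t)_k-(V_t)_k\bigr)\Bigr|\ \le\ \Bigl(\sum_{k}a_{i,k}^2\Bigr)^{1/2}\frac{\sqrt m}{2\sqrt2}\ \le\ b,
\]
the last inequality being precisely the definition \eqref{bconstant} of $b$. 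Therefore the $i$-th face functional has value $\le -t+b\le0$ at $\mathcal{M}_t$, and is strictly negative once $t\ge b$ (the only way to get $0$ would be for $V_t$ to be half-integral in every coordinate with the Cauchy--Schwarz bound simultaneously attained at that $i$, a degenerate coincidence that can be excluded, or removed by replacing $t$ by $t+\varepsilon$). Hence $\mathcal{M}_t\in C^{+}$, and in particular $\mathcal{M}_t\in\zb^{m/2}$.

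For (i): with the half-coefficient vector $\mathcal{M}_t$ interior to $C^{+}$, completing it by the reciprocity-forced identities \eqref{lienPiTi} — which is exactly the system \eqref{casesequations}, and which produces integer coefficients since $T\in\zb[z]$ — yields a monic $P\in\zb[z]$ of degree $m$ satisfying both conditions of Theorem \ref{QtoP}; by that criterion $P$ is expansive and $(z-1)T(z)=zP(z)-P^{*}(z)$. For (ii): this $P$ has $P(0)=p_0=M_0$, and since $P$ is monic with all roots of modulus $>1$ one gets ${\rm M}(P)=|P(0)|=|M_0|$, which equals $\prod|\alpha_i|>1$ over the roots $\alpha_i$ of $P$, hence $\ge2$; granting moreover $M_0>0$, so that $\epsilon=-{\rm sgn}\,P(0)=-1$, the $Q$-construction from $P$ gives $Q(z)=zP(z)-P^{*}(z)=(z-1)T(z)$, which vanishes at $\beta$, and the definition of the class $A_q$ places $\beta$ in $A_{M_0}=A_{q_b}$ with $q_b=\mbox{pr}_0\mathcal{M}_b=M_0$. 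That $\beta$ can also lie in classes $A_q$ with $q<q_b$ follows from the inclusions $A_2\subset A_q$ and $A_q\subset A_{kq-k+1}$ recalled in \S\ref{sec:SS2S2}, together with the fact that other interior lattice points furnish expansive polynomials of smaller constant term over $T$.

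The delicate point is the estimate of the second paragraph: one has to make the rounding-error norm and the Cauchy--Schwarz bound fit \emph{exactly} below the slack $-t$, and in particular justify strictness at the threshold value $t=b$ (or accept enlarging $t$ slightly there). A secondary nuisance is pinning down the sign $M_0>0$, needed so that the conclusion is the class $A_{q_b}$ rather than $A_{|M_0|}$; this should follow from the location of $V_t$ near the vertex, whose first coordinate $\tfrac12(\beta+\beta^{-1})$ is positive.
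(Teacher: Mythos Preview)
Your proof is correct and follows essentially the same approach as the paper: both locate the real interior point $V_t$ satisfying $A V_t+\Lambda={}^{t}(-t,\dots,-t)$, bound the rounding displacement by $\sqrt{m}/(2\sqrt{2})$, and conclude that the rounded lattice point stays in the admissible cone. The paper phrases this geometrically (the Euclidean distance from $V_t$ to each face hyperplane $H_i$ is $t/\|\mathcal{N}_i\|\ge\sqrt{m}/(2\sqrt{2})$, so the $\ell^\infty$-cube of radius $\tfrac12$ around $V_t$ lies in $C^{+}$), whereas you compute the same thing algebraically via Cauchy--Schwarz on the face functionals; you are also a bit more careful than the paper about the borderline case $t=b$.
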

  
\begin{proof}
(i) For $t \in \rb^{+}$, let $\mathcal{M}_t = \mbox{}^{t}({M}_0, {M}_1, ..., {M}_{\frac{m}{2}-1})$,
with real coordinates in the canonical basis of $\rb^{m/2}$,
be the unique vector solution of the equation
\begin{equation}
\label{coordonneesMt}
A \mathcal{M}_t+\Lambda= \mbox{}^{t}(-t, -t, \ldots, -t).
\end{equation}
If $t \neq 0$, 
$\mathcal{M}_t$ lies in the open admissible cone  
and, if $t=0$, $\mathcal{M}_0$ is the summit vertex of the cone.
Let $d_2$ be the usual Euclidean distance on $\rb^{m/2}$
and $d_{\infty}$ the distance defined by 
$d_{\infty}(V,W) = \max_{i=0,\ldots, m/2 -1} |V_i - W_i|$. Denote by
$\overline{B_{\infty}}$ the closed balls (cuboids) for this distance. 

For $0 \leq i \leq \frac{m}{2}-1$ denote by $\emph{H}_i$ the $i$-th face hyperplane
of the admissible cone, by   
$\mathcal{N}_i=\mbox{}^{t}(a_{i,0},a_{i,1} , ... , a_{i,\frac{m}{2}-1})$ the vector orthogonal
to the hyperplane $\emph{H}_i$ and by pr$_{H_i}(\mathcal{M}_{t})$ the orthogonal projection
of $\mathcal{M}_{t}$ on $H_i$. 
For all $t \geq b$, the scalar product
$\|\mathcal{N}_i\|^{-1} (\mathcal{N}_{i} \cdot \mathcal{M}_t)$ is equal to
 $d_{2}(\mathcal{M}_t,\mbox{pr}_{\emph{H}_i}({\mathcal{M}}_{t}))
 =$
$$\frac{|a_{i,0} {M}_0 + a_{i,1}{M}_1 +  ...  a_{i,\frac{m}{2}-1}{M}_{\frac{m}{2}-1} + \lambda_i|} {\sqrt{a_{i,0}^2 + a_{i,1}^2 + ... + a_{i,\frac{m}{2}-1}^2}} 
  > \frac{|-t|}{b  \frac{2 \sqrt{2}}{\sqrt{m}}} \geq \frac{\sqrt{m}}{2 \sqrt{2}}.$$
Therefore the cuboid $\overline{B}_{\infty}(\mathcal{M}_{t},\frac{1}{2})$, which
 is a fundamental domain of the lattice  $\displaystyle \zb^\frac{m}{2}$, does not contain the point
pr$_{\emph{H}_i}(\mathcal{M}_t)$. This cuboid is included in the (open) admissible cone $\emph{C}^{+}$. 
Consequently the vector $\displaystyle (p_i)_ {0 \leq i \leq  {\frac{m}{2}-1}} = \mbox{round}(\mathcal{M}_{t})$ lies 
in $ \zb^ {\frac{m}{2}} \cap \emph{C}$; it is the half-coefficient vector of an expansive polynomial.

(ii) The above method allows us to explicitely compute
the expansive polynomial $P$ the closest to the summit vertex 
$\mathcal{M}_{0}$ of the cone:
by solving \eqref{coordonneesMt} with $t=b$ and using
\eqref{casesequations}. Since the integer component $M_0$ 
of $\mathcal{M}_{b}$ is equal to the Mahler measure $q$ of $P$
and that $P$ produces $\beta$, we deduce the claim.
\end{proof}

Similar explicit bounds $q_b$ for $q$ are easy to establish if $T$ is
a nonirreducible Salem polynomial (as in \S \ref{sec:SS4.3S2})
or in the second case of Salem numbers in $A_q$ (as in \S \ref{sec:SS4.4S2})
using the equations of the face hyperplanes
deduced from the Criterions of expansivity. 
We do not report them below.

We have seen that, using  Theorem \ref{theoremA} and Theorem \ref{mcsm51}, 
we can construct sequences of Salem polynomials and Salem numbers, 
depending upon the choice of the expansive polynomial over the Salem polynomial. 
With the method of Theorem \ref{Pexistsprop}, we compute iteratively for instance
the following sequence of Salem numbers from the Lehmer number:
$\theta \approx 1.1762808$, 
$\theta_2\approx 8.864936672$ , $\theta_3\approx 21.56809548$, $\theta_4\approx 45.44171097 $, 
$\theta_5\approx 87.36603624$, $\theta_6\approx 155.3214209$, $\theta_7\approx 261.2942782$, 
$\theta_8\approx 423.2784753$, $\theta_9\approx 668.2676464$, $\theta_{10}\approx 1037.261121$.
  
The Schur-Cohn-Lehmer Theorem may be used 
to compute an expansive polynomial $P$ 
over a Salem polynomial $T$ with dominant root in $A_q$
for a given $q \leq q_b$. It may happen that
this component-by-component search for the coefficient vector of $P$ fails.

\begin{theorem}[Schur-Cohn-Lehmer] 
Let $P(z) = p_m z^m + p_{m-1} z^{m-1} + .. + p_1 z + p_0 , 
~p_m p_0 \neq 0,$ 
be a polynomial of degre m with real coefficients.
Let $\mathcal{T}$ be the transformation 
$\mathcal{T} : P(z) \mapsto \mathcal{T}(P)(z) = p_0 P(z) - p_m P^*(z)$. 
Let $\mathcal{T}^{1} := \mathcal{T}$ and 
$\mathcal{T}^{k} := \mathcal{T} (\mathcal{T}^{k-1})$ for
$k \geq 2$. Then
$P(z)$ is an expansive polynomial if and only if 
$$\mathcal{T}^{k}(P)(0) > 0 \qquad \quad \mbox{for all}~ 1 \leq k \leq m .$$
\end{theorem}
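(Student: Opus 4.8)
This is the classical Schur--Cohn--Lehmer criterion, and the route I would take is the standard one: Rouch\'e's theorem together with Cohn's reduction lemma, iterated with careful bookkeeping of degrees. The first move is to fix a convention: when iterating $\mathcal{T}$ from $P$ of degree $m$, I regard $\mathcal{T}^{k}(P)$ as a polynomial of \emph{formal degree} $m-k$ (its true degree may be strictly smaller, some top coefficients being $0$), and I take the reciprocation $f^{*}$ and the transform $\mathcal{T}$ with respect to that formal degree. Writing $f(z)=\sum_{j=0}^{n} a_{j} z^{j}$ of formal degree $n$, one has $f^{*}(z)=\sum_{j=0}^{n} a_{n-j} z^{j}$, and $\mathcal{T}(f)(z)=a_{0} f(z)-a_{n} f^{*}(z)$ has formal degree $n-1$ (the two $z^{n}$ terms cancel) with constant term $a_{0}^{2}-a_{n}^{2}$. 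Thus the hypotheses ``$\mathcal{T}^{k}(P)(0)>0$ for $1\le k\le m$'' say exactly that at every step $k=0,\dots,m-1$ the constant term $p_{0}^{(k)}$ of $\mathcal{T}^{k}(P)$ strictly dominates, in absolute value, the formal leading coefficient $p_{m-k}^{(k)}$ of $\mathcal{T}^{k}(P)$.

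The heart of the proof is Cohn's lemma: if $f$ has formal degree $n\ge 1$, real coefficients, and $|f(0)|>|a_{n}|$, then $f$ has a zero on $|z|=1$ if and only if $\mathcal{T}(f)$ does, and when neither does, $f$ and $\mathcal{T}(f)$ have the same number of zeros in $|z|<1$. I would derive it from the elementary identity $|f^{*}(z)|=|f(z)|$ for $|z|=1$ (for real coefficients, since $f^{*}(z)=z^{n}f(1/z)$ and $|f(1/z)|=|\overline{f(z)}|=|f(z)|$ there): if $f(z_{0})=0$ with $|z_{0}|=1$ then $f^{*}(z_{0})=0$, hence $\mathcal{T}(f)(z_{0})=0$; and if $f$ has no zero on $|z|=1$, then on that circle $|a_{0} f(z)|=|a_{0}|\,|f(z)|>|a_{n}|\,|f(z)|=|a_{n}f^{*}(z)|$ strictly, so $\mathcal{T}(f)=a_{0}f-a_{n}f^{*}$ is nonvanishing there, and Rouch\'e's theorem on $|z|\le 1$ (with strict inequality on the boundary) gives that $\mathcal{T}(f)$ and $a_{0}f$ have equally many zeros in $|z|<1$, i.e. the same number as $f$.

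Both implications then follow by a one-step induction, ascending for the direct one and descending for the converse. If $P$ is expansive it has no zero in $\overline{D(0;1)}$, and I claim the same for each $\mathcal{T}^{k}(P)$, $0\le k\le m$, by induction on $k$: if $\mathcal{T}^{k}(P)$ has no zero in $\overline{D(0;1)}$ then $p_{0}^{(k)}\neq 0$, and either $p_{m-k}^{(k)}=0$, or the true degree of $\mathcal{T}^{k}(P)$ equals $m-k$ and $|p_{0}^{(k)}|=|p_{m-k}^{(k)}|\prod|\rho_{i}|>|p_{m-k}^{(k)}|$ because all root moduli $|\rho_{i}|$ exceed $1$; in either case $|p_{0}^{(k)}|>|p_{m-k}^{(k)}|$, so $\mathcal{T}^{k+1}(P)(0)=(p_{0}^{(k)})^{2}-(p_{m-k}^{(k)})^{2}>0$ and Cohn's lemma carries the ``no zero in $\overline{D(0;1)}$'' property to $\mathcal{T}^{k+1}(P)$. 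This simultaneously produces all the required positivities. Conversely, if $\mathcal{T}^{k}(P)(0)>0$ for $1\le k\le m$, this positivity is precisely the hypothesis $|p_{0}^{(k)}|>|p_{m-k}^{(k)}|$ of Cohn's lemma at every step $k=0,\dots,m-1$; since $\mathcal{T}^{m}(P)$ is a positive constant, hence has no zeros at all, a descending induction on $k$ from $m$ to $0$ — using the ``iff'' part of the lemma to exclude zeros on $|z|=1$ and the counting part to keep the zero count equal to $0$ — shows that every $\mathcal{T}^{k}(P)$, in particular $P$ itself, has no zero in $\overline{D(0;1)}$; as $\deg P=m$, all $m$ roots of $P$ lie in $|z|>1$, so $P$ is expansive.

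The step I expect to require genuine care is the degree bookkeeping: the true degree of $\mathcal{T}(f)$ can drop by more than one (already $\mathcal{T}(z^{2}-2)=3$), so one must argue consistently with the formal degree and with the closed-disc statement ``no zeros in $\overline{D(0;1)}$'' rather than with ``expansive of the expected degree'', and one must be sure Rouch\'e is applied with a \emph{strict} inequality on the boundary circle; the ``iff'' half of Cohn's lemma for polynomials that do vanish on $|z|=1$ is the other point that needs the short separate computation above. Once the convention is pinned down, the two inductions are routine, and I would remark that in the non-degenerate case these conditions are equivalent to positivity of the successive Schur--Cohn determinants.
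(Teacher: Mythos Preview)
Your proof is correct and complete. The paper does not give its own argument for this theorem: its ``proof'' consists solely of citations to Lehmer and to Marden, pp.~148--151. You therefore do considerably more than the paper does, supplying the classical Schur--Cohn argument (Cohn's reduction lemma via Rouch\'e, iterated in both directions) in full detail. Your formulation of the reduction step is the standard one in Marden, so in substance you are reproducing the cited proof rather than offering an alternative; the care you take with the formal-degree convention and with the strict inequality needed for Rouch\'e is exactly right and is the only place the argument can go wrong if handled carelessly.
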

\begin{proof}
Lehmer \cite{lehmer}, Marden \cite{marden}, pp 148-151.
\end{proof}

Note that the first step of the algorithm gives to  
$\mathcal{T}(P)(0)=p_0 ^2 -p_{m} ^2 = p_0 ^2 - 1 > 0$, 
for a monic expansive polynomial $P$,
so that the inequality $q = |p_0| =
$ M$(P) \geq 2$
holds (cf Remark \ref{qdessus2}).  
  
The coefficient vector $(p_i)$ of $P$ can be 
constructed recursively: for $k > 0$,
 knowing $(p_i)_{i \leq k-1}$ and $(p_i)_{ i \geq 2n-k+1}$, 
 we choose $p_k$ in the interval given by the quadratic equation in $p_k$ : $\mathcal{T}^{k+1}(P)(0)>0$ and
 we compute $p_{m-k} = -t_{k-1}+t_k + p_{k-1}$. 
 For some initial values $p_0=q$ not large enough, we obtain no solution. 

The 20 smallest Salem numbers, with their minimal polynomials and associated 
expansive polynomials are presented in Figure \ref{tableau_1}.

\begin{figure}
\begin{center}
\mbox{} \hspace{1.5cm}
\includegraphics[width=11.75cm]{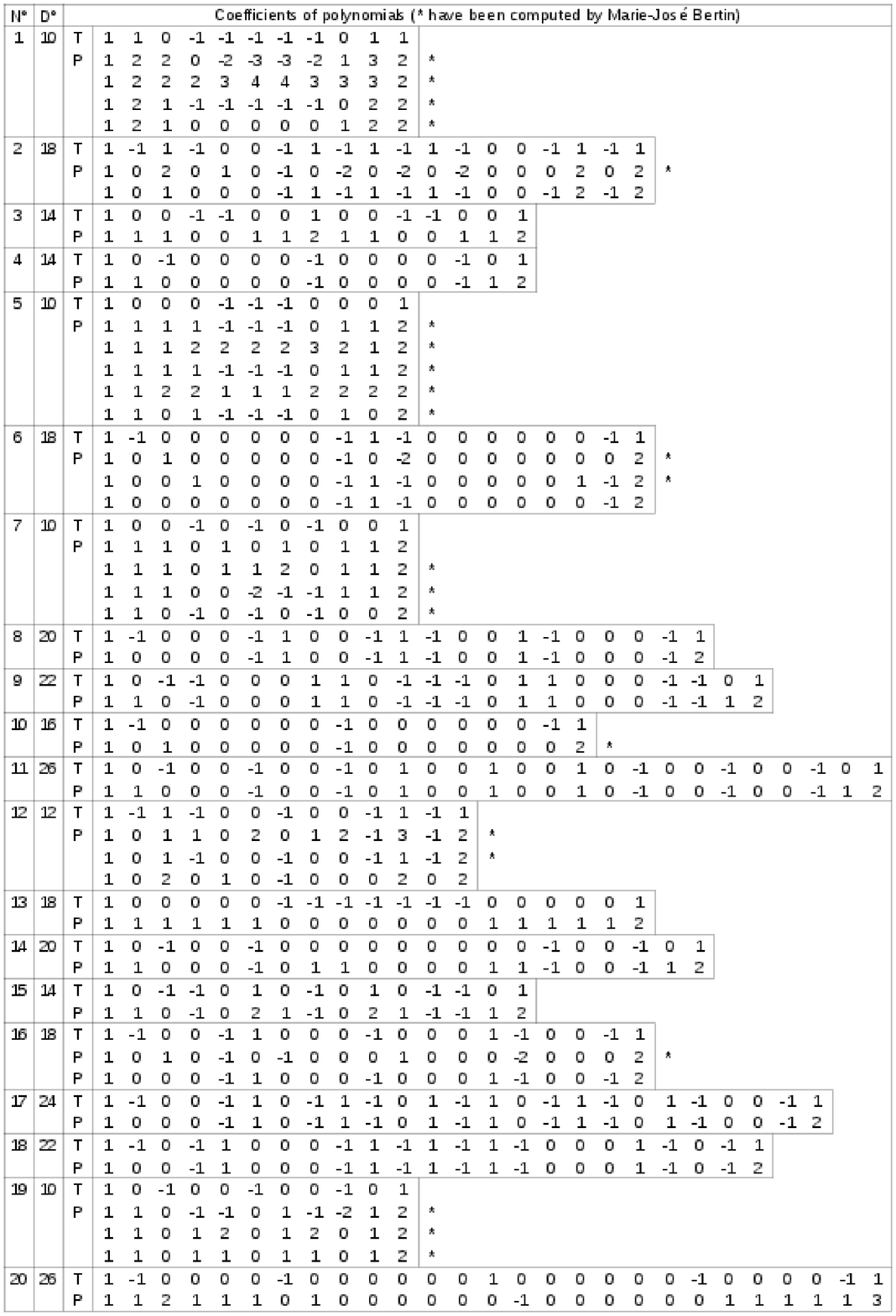}
\end{center}
\caption{Minimal polynomials $T$ of the 20 smallest Salem numbers $\beta$ 
and some of the associated monic expansive polynomials $P$ over $T$.
The degree $D^{°}$ is $\deg (T) = \deg (P)$. 
The coefficients of $P$ are such that the constant term, on the 
right, is the integer $q \geq 2$ for which $\beta \in A_q$.}  
\label{tableau_1}
\end{figure}

\begin{remark}
\label{omega1_CetT}
Let $T(z)$ be a Salem polynomial, of degree $m$. 
Denote by $k$ the maximal real
subfield of the splitting field of $T$. The
admissible cone $\emph{C}^{+}$ is defined by the associated pair
$(A = (a_{i,j}), \Lambda = (\lambda_i)) 
\in GL(\frac{m}{2}, k) \times k^{m/2}$, i.e.  
the affine equations of its facets, in
$\mathbb{R}^{m/2}$, which are 
functions of the roots of $T$.
Every point, say $\uP$, of
$\emph{C}^{+} \cap \zb^{m/2}$ has coordinates
$\mbox{}^{t}(p_0, p_1, \ldots, p_{m/2 - 1})$ which 
constitute the half coefficient vector of 
a unique monic expansive polynomial, say $P$, 
by \eqref{lienPiTi}:
there is a bijection between
$\uP$ and $P$. 
Denote by $\widehat{\uP}$ the second half 
of the coefficient vector of $P$.
The set of monic
expansive polynomials $P$, defined by their coefficient vectors,
is an infinite  subset of $\zb^m$,
in bijection with $\emph{C}^{+} \cap \zb^{m/2}$.
The map
$\omega_1$ decomposes into two parts:
$$\omega_{1,C^+}: \zb^m \to \emph{C}^{+} \cap \zb^{m/2}, 
\mbox{}^{t}(t_i)_{i=0,\ldots,m-1} \to \mbox{}^{t}(p_i)_{i=0,\ldots,\frac{m}{2}-1}$$
and, once the image of $\omega_{1,C^+}$ is fixed, using \eqref{lienPiTi},
$$\widehat{\omega_{1,C^+}}: \zb^{m/2} \to \zb^{m/2}, 
\mbox{}^{t}(p_i)_{i=0,\ldots,\frac{m}{2}-1} \to (p_{\frac{m}{2}+i})_{i=0,\ldots,\frac{m}{2}-1}.$$
By abuse of notation, we denote: 
\begin{center}
$\displaystyle \omega_{1,C^+}(T) = \uP ,
~\widehat{\omega_{1,C^+}}(\uP) = \widehat{\uP}$,
~and~ $\omega_{1}(T) = (\uP, \widehat{\uP}) = P.$  
\end{center}
\end{remark}

%------------------------------------------------------------------------------------------------------------------
\subsubsection{Condition on an expansive polynomial P to produce a Salem number or a negative Salem number}
\label{sec:SS4.6S2}

Given a monic expansive polynomial $P(X) \in \zb[X]$ 
we separate two cases for the geometry of the roots of $Q$
by the $Q$-construction (cf \S \ref{sec:SS1S2}):
(i) either all the zeroes of $Q$ are on $|z| = 1$ : $Q$ is then
a product of cyclotomic polynomials, or
(ii)
$Q$ has all but two zeroes on $|z|=1$, and we obtain a Salem number $\beta$.

To ensure that $Q$ has a root outside the closed unit disk 
Boyd (\cite{boyd0} p.318, Corollary 3.2) proposes a criterion 
counting the number of entrances of $Q$. Below we give a criterion on $P$
to discriminate between the two cases, in a more general setting 
including polynomials vanishing at negative 
Salem numbers \cite{haremossinghoff}.
\begin{proposition}
Let P be a monic expansive polynomial, with integer coefficients,
of even degree $m \geq 4$. Let $\eta =+1$ or $\eta=-1$.
Let $T$ be a polynomial having simple roots which satisfies 
$$(z + \eta) T(z) = z P(z) + \eta P^*(z).$$
If the following inequality 
\begin{equation}
\label{Pcrit1}
P(\eta)((1-m)P(- \eta) - 2 \eta P'(- \eta)) < 0
\end{equation} 
holds then 
$T(- \eta \, {\rm sgn}(P(0)) \, z) $ 
is a Salem polynomial which does not vanish at $\pm 1$. 
\end{proposition}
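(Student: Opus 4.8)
The plan is to feed $Q(z):=zP(z)+\eta P^*(z)$ into the $Q$-construction of \S\ref{sec:SS1S2} and into the Rouché-and-branches analysis of the proof of Theorem~\ref{QtoP}. First I would collect the elementary identities forced by $m$ even and $\eta=\pm1$: since $1/\eta=\eta$ and $\eta^m=1$ one gets $P^*(\eta)=P(\eta)$, $P^*(-\eta)=P(-\eta)$ and $P^*(0)=1$, hence $Q(-\eta)=0$ (so $(z+\eta)\mid Q$, matching the hypothesis $Q=(z+\eta)T$), $Q(\eta)=2\eta P(\eta)$, $Q(0)=\eta$, and, on differentiating, $Q'(-\eta)=(1-m)P(-\eta)-2\eta P'(-\eta)$. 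Because $Q=(z+\eta)T$ gives $T(-\eta)=Q'(-\eta)$, the hypothesis \eqref{Pcrit1} is precisely the statement $P(-\eta)\,T(-\eta)<0$; in particular $T(-\eta)\neq0$, so $T$ having simple roots makes $Q=(z+\eta)T$ have only simple roots, and from $T(\eta)=Q(\eta)/(2\eta)=P(\eta)\neq0$ (expansivity of $P$) one has $T(\pm1)\neq0$. Thus the non-vanishing part of the conclusion is already in hand.

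Next I would localise the root $z=-\eta$. With $Q_t(z)=zP(z)+\eta t P^*(z)$, Rouché against $zP(z)$ (using $|zP(z)|=|P^*(z)|>|\eta t P^*(z)|$ on $|z|=1$) shows that for $0\le t<1$ the polynomial $Q_t$ has exactly one zero in $\overline{D(0;1)}$, $m$ zeros outside, and none on $|z|=1$; and the reciprocity relation $Q_{1/t}(1/z)=\eta\,t^{-1}z^{-(m+1)}Q_t(z)$ holds verbatim. Hence $Q=Q_1$ has at most one zero in the open disk, so either \textbf{(i)} all $m+1$ zeros of $Q$ lie on $|z|=1$, or \textbf{(ii)} $Q$ has one zero $1/\gamma$ strictly inside, one zero $\gamma$ outside $\overline{D(0;1)}$ (necessarily real, being the only zero outside, hence $|\gamma|>1$), and $m-1$ on the circle. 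Repeating the local-inversion computation of Theorem~\ref{QtoP} at $\alpha=-\eta$ (which lies on $|z|=1$, hence is its own inversion partner) produces a \emph{real} branch $h_{-\eta}(t)=-\eta\,[1+(t-1)\lambda+\cdots]$ with $\lambda=P^*(-\eta)/Q'(-\eta)=P(-\eta)/T(-\eta)$. In case (i) the unique interior branch of $Q_t=0$ for $t<1$ — the continuation of the zero at $z=0$, which is real because $Q_t$ then has a single zero in the open disk — must end, as $t\to1^-$, at the only real point of $\{|z|=1\}$ that is a zero of $Q$, namely $z=-\eta$ (recall $Q(\eta)\neq0$); so $-\eta$ is an exit, $\lambda>0$, and \eqref{Pcrit1} fails. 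Since (i) and (ii) are exhaustive, \eqref{Pcrit1} forces case (ii).

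Granting \eqref{Pcrit1}, we are thus in case (ii): $T=Q/(z+\eta)$ is monic with integer coefficients of degree $m$, with exactly one (simple) zero $\gamma$ off the unit circle, its reciprocal $1/\gamma$ inside, and $m-2\ge2$ simple zeros on $|z|=1$; hence $T(\operatorname{sgn}(\gamma)\,z)$ is a Salem polynomial. It remains to identify $\operatorname{sgn}(\gamma)$ with $-\eta\operatorname{sgn}(P(0))$. Here I would use that $\gamma$ and $1/\gamma$ are the only real zeros of $T$ and share the sign $s:=\operatorname{sgn}(\gamma)$ (their product is $1$), that $T$ is monic of even degree, and that $T(\eta)=P(\eta)$: a short check over the four choices of $(s,\eta)$, reading off the sign of $T$ at $\eta\in\{1,-1\}$ from the position of $\eta$ relative to $\{\gamma,1/\gamma\}$, gives $\operatorname{sgn}(P(\eta))=-\eta s$, i.e. $s=-\eta\operatorname{sgn}(P(\eta))$. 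Finally, expansivity of $P$ gives $\operatorname{sgn}(P(\eta))=\operatorname{sgn}(P(0))$: writing $P(z)=\prod_i(z-\alpha_i)$ with all $|\alpha_i|>1$, the ratio $P(\eta)/P(0)=\prod_i(1-\eta/\alpha_i)$ is a product of positive reals (for real $\alpha_i$) and of squared moduli of conjugate pairs, hence positive. Therefore $s=-\eta\operatorname{sgn}(P(0))$, and $T\bigl(-\eta\operatorname{sgn}(P(0))\,z\bigr)=T(sz)$ has its unique off-circle zero at $|\gamma|>1$, so it is a Salem polynomial, not vanishing at $\pm1$ by the first paragraph.

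The main obstacle I expect is the topological step in the second paragraph: rigorously justifying that in case (i) the interior branch terminates at $z=-\eta$ (equivalently, that $-\eta$ is the unique exit), and dually that under \eqref{Pcrit1} the single interior zero of $Q$ is the genuine ``inverse Salem zero'' $1/\gamma$ and not $-\eta$. This rests on the continuity of the branches $t\mapsto Z(t)$ of $Q_t(z)=0$ together with the Rouché count ``exactly one zero in $\overline{D(0;1)}$ for $t<1$'', and on the observation that a branch approaching the strictly-interior point $1/\gamma$ stays strictly inside for $t$ near $1$, so that an interior branch approaching $-\eta$ as well would force a \emph{second} interior zero and contradict the count. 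The remaining ingredients — the algebraic identities, the Rouché estimate, the local inversion, and the final sign bookkeeping — are routine.
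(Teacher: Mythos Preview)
Your argument is essentially correct, but it takes a different and heavier route than the paper's. The paper (treating only $\eta=-1$, $P(0)>0$ and leaving the rest to symmetry) computes directly $T(-1)=P(-1)$ and $T(1)=(1-m)P(1)+2P'(1)$, so that \eqref{Pcrit1} is literally $T(1)T(-1)<0$; it checks $T(\pm1)\neq0$ ($T(-1)$ by expansivity, $T(1)$ by a short degree-parity argument about cyclotomic factors of a simple-rooted $T$), and then simply invokes the intermediate value theorem: $T(1)T(-1)<0$ gives a real root of $T$ in $(-1,1)$, which forces case~(ii) of the $Q$-construction dichotomy already established in \S\ref{sec:SS1S2}. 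In particular the paper completely sidesteps the branch-tracking step you flagged as the main obstacle. Your approach, redoing the local $\lambda$-computation at $\alpha=-\eta$ to rule out case~(i), works too, and it buys you something the paper omits: an explicit determination of $\operatorname{sgn}\gamma$ in all four $(\eta,\operatorname{sgn}P(0))$ cases via $T(\eta)=P(\eta)$.

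One slip to fix: in your first paragraph you write that \eqref{Pcrit1} is ``$P(-\eta)\,T(-\eta)<0$'', but the first factor in \eqref{Pcrit1} is $P(\eta)$, so \eqref{Pcrit1} is $P(\eta)\,T(-\eta)<0$. This is harmless for the overall logic because $\operatorname{sgn}P(\eta)=\operatorname{sgn}P(-\eta)=\operatorname{sgn}P(0)$ for monic expansive $P$ --- but you only prove that identity in the \emph{last} paragraph, while you already need it at the end of paragraph two when you say ``$\lambda=P(-\eta)/T(-\eta)>0$, and \eqref{Pcrit1} fails''. Move the one-line observation that $P$ has constant sign on $[-1,1]$ to the start, and everything lines up.
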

\begin{proof}
Let us give a proof with $\eta = -1$ and $P(0)> 0$, the other cases
being treated using similar arguments.
 
First, $T$ is necessarily monic, reciprocal, with integer coefficients.
Now, deriving $(z-1)T(z)=zP(z)-P^*(z)$, we obtain: 
$(z-1)T'(z)+T(z)=P(z) + zP'(z) - m z^{m-1} P(\frac{1}{z}) + z^{m-2} P'(\frac{1}{z})$.
Hence $T(1)=(1-m)P(1) + 2 P'(1)$. 
On one hand, $T(-1) = P(-1) \neq 0$ since $P$ is expansive.
Let us show that $T(1) \neq 0$. Let us assume 
the contrary. Since $T$ has even degree and that any cyclotomic polynomial
$\Phi_n$ has even degree as soon as $n > 2$, 
an even power of $(z-1)$ should divide $T(z)$. 
Since $T$ has simple roots we obtain a contradiction. 
By the theorem of intermediate values, 
$T$ has a unique real root in $(-1, 1)$ if and only 
if the condition $T(1)T(-1) < 0$, equivalently
\eqref{Pcrit1}, holds. 
\end{proof}

%-------------------------------------------------------------------------------------------
\subsubsection{Semigroup structure}
\label{sec:SS4.7S2}

Let $q \geq 2$ be an integer. Let $m \geq 4$ be an even integer.  
Let $T$ be a Salem polynomial of degree $m$, vanishing at 
a Salem number $\beta$, having simple roots, such that $T(\pm 1) \neq 0$.
Denote by $E_{T,q}^{+} \subset E_{q}$ the set of monic expansive polynomials 
over $T$ of degree $m$, defined by their coefficient vectors,
as a point subset of the lattice $\zb^m$, satisfying \eqref{PtoT}, with 
$P(0)=+q$, resp. $E_{T,q}^{-} \subset E_{q}$ the set of monic expansive polynomials
over $T$ of degree $m$ satisfying \eqref{niouEQfactor}, with
$P(0)=-q$. 
Denote by $\mathcal{P}_{T}^{+} :=
\bigcup_{q \geq 2} E_{T,q}^{+}$, resp.  
$\mathcal{P}_{T}^{-} :=
\bigcup_{q \geq 2} E_{T,q}^{-}$
and
$$\mathcal{P}_{T} := \mathcal{P}_{T}^{+} \cup \mathcal{P}_{T}^{-} =
\bigcup_{q \geq 2} \Big( E_{T,q}^{+} \cup E_{T,q}^{-} \Bigr).$$

\begin{theorem}
 Let $P \in E_{T,q}^{+}$  and $P^{\dag} \in E_{T,q^{\dag}}^{+}$ be two monic expansive polynomials 
over $T$ of degree $m$.
 Then the sum $P+P^{\dag}-T$ is a monic expansive polynomial of degree $m$ over $T$,
in $E_{T,q+q^{\dag}-1}^{+}$. 
The internal law $(P, P^{\dag}) \mapsto P \oplus P^{\dag} := P+P^{\dag}-T$ defines 
a commutative semigroup structure on $\mathcal{P}_{T}^{+} \cup \{T\}$, 
where $T$ is the neutral element. 
\end{theorem}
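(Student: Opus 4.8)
The plan is to verify the three assertions in order: (1) $P \oplus P^\dag := P + P^\dag - T$ satisfies the association equation \eqref{PtoT}; (2) it is monic, expansive, of degree $m$, with constant term $q + q^\dag - 1$, hence lies in $E_{T,q+q^\dag-1}^{+}$; and (3) the law is commutative, associative and admits $T$ as neutral element, so that $\mathcal{P}_T^{+} \cup \{T\}$ is a commutative semigroup (monoid).

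First I would check the association equation. Since \eqref{PtoT} is $\zb$-linear in $P$ — the map $P \mapsto zP(z) - P^*(z)$ is additive and sends $T$ (a reciprocal polynomial) to $zT(z) - T^*(z) = (z-1)T(z)$ — we get, writing $R := P + P^\dag - T$,
\begin{equation*}
zR(z) - R^*(z) = \bigl(zP - P^*\bigr) + \bigl(zP^\dag - (P^\dag)^*\bigr) - \bigl(zT - T^*\bigr) = (z-1)T(z) + (z-1)T(z) - (z-1)T(z) = (z-1)T(z).
\end{equation*}
So $R$ again satisfies \eqref{PtoT} relative to the same $T$. Next, $R$ is monic of degree $m$: the degree-$m$ coefficients are $1 + 1 - 1 = 1$, and all higher coefficients vanish; the constant term of $R$ is $p_0 + p_0^\dag - t_0 = q + q^\dag - 1$ (using $t_0 = 1$ since $T$ is reciprocal of even degree with leading coefficient $1$). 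Note $q + q^\dag - 1 \geq 2 + 2 - 1 = 3 \geq 2$, consistent with Remark \ref{qdessus2}. It remains to show $R$ is expansive, and here is where I would invoke Theorem \ref{QtoP} (or Theorem \ref{critt2.4.3} for the possibly-reducible case): since $R$ satisfies \eqref{PtoT}, $R$ is expansive if and only if $R(1)T(1) < 0$ and, for every root $\alpha$ of $T$ on the unit circle, $(\alpha-1)\alpha^{1-m}R(\alpha)T'(\alpha)$ is real and negative. Now at each such point $z \in \{1\} \cup \{\alpha : |\alpha|=1, T(\alpha)=0\}$, both $P$ and $P^\dag$ lie in the (open, convex) admissible cone, i.e. the linear forms $P \mapsto P(1)T(1)$ and $P \mapsto (\alpha-1)\alpha^{1-m}P(\alpha)T'(\alpha)$ are negative on both. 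Since $R = P + P^\dag - T$ and these linear forms vanish on $T$ (indeed $T(1)T(1)$ is not zero — but the relevant observation is rather that the \emph{admissible cone} in $\rb^{m/2}$ is exactly the set of solutions of the strict inequalities $\lambda_i + \sum_j a_{i,j}p_j < 0$, and the summit vertex $\mathcal{M}_0$ corresponds to the polynomial $P_0$ with $P_0(1) = P_0(\alpha_i) = 0$; translating coordinates by $\mathcal{M}_0$ turns the cone into a genuine cone $\mathcal{C}$ through the origin), the half-coefficient vectors satisfy: if $\uP - \mathcal{M}_0$ and $\uP^\dag - \mathcal{M}_0$ lie in the open cone $\mathcal{C}$, then so does their sum $(\uP - \mathcal{M}_0) + (\uP^\dag - \mathcal{M}_0) = \underline{R} - \mathcal{M}_0$, because a cone is closed under addition. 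Hence $\underline{R}$ lies in the admissible cone, so $R$ is expansive by the Criterion. Thus $R \in E_{T, q+q^\dag-1}^{+} \subset \mathcal{P}_T^{+}$.

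Finally, for the algebraic structure: commutativity of $\oplus$ is immediate from commutativity of polynomial addition. Associativity: $(P \oplus P^\dag) \oplus P^\ddag = (P + P^\dag - T) + P^\ddag - T = P + P^\dag + P^\ddag - 2T = P \oplus (P^\dag \oplus P^\ddag)$. The neutral element is $T$: $P \oplus T = P + T - T = P$, and one checks $T$ satisfies \eqref{PtoT} trivially (it is reciprocal of even degree), so adjoining $\{T\}$ is legitimate even though $T$ itself is not expansive. Hence $(\mathcal{P}_T^{+} \cup \{T\}, \oplus)$ is a commutative monoid. The one point requiring care — and the main (mild) obstacle — is closure: one must be sure that $R = P + P^\dag - T$ genuinely lands back in $\mathcal{P}_T^{+}$, i.e. that expansivity is preserved; this is precisely what the convex-cone argument above secures, leaning on the fact (established in \S\ref{sec:SS4.2S2}) that membership in $E_{T,q}^{+}$ for some $q \geq 2$ is equivalent to the half-coefficient vector lying in $\zb^{m/2}$ inside the admissible cone, together with the elementary remark that a translate of a polyhedral cone by its apex is stable under addition.
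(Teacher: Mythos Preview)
Your overall strategy matches the paper's second proof (the cone argument), and the verification of \eqref{PtoT}, monicity, degree, constant term, and the semigroup axioms is fine. But the crucial expansivity step contains an actual error. You write
\[
(\uP - \mathcal{M}_0) + (\uP^\dag - \mathcal{M}_0) \;=\; \underline{R} - \mathcal{M}_0,
\]
where $\underline{R} = \uP + \uP^\dag - \uT$. This identity forces $\uT = \mathcal{M}_0$, which is false: $\mathcal{M}_0$ is the half-coefficient vector of the apex polynomial $P_0(z) = (z-1)(z-r)\prod_i (z^2 - (\alpha_i + \overline{\alpha_i})z + 1)$ with $r = \tfrac{1}{2}(\beta + \beta^{-1})$, and in particular $P_0(0) = r \notin \zb$, whereas $T(0) = 1$. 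So the ``translate the cone by its apex'' reasoning does not apply to the operation $P + P^\dag - T$.

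The repair is exactly what the paper does, and in fact is what you almost wrote before pivoting: evaluate the linear forms of Theorem~\ref{critt2.4.3} directly. For each root $\alpha$ of $T$ on $|z|=1$ one has $T(\alpha)=0$, so
\[
(\alpha-1)\alpha^{1-m}R(\alpha)T'(\alpha)
= (\alpha-1)\alpha^{1-m}P(\alpha)T'(\alpha) + (\alpha-1)\alpha^{1-m}P^\dag(\alpha)T'(\alpha) < 0.
\]
At $z=1$ one has $R(1)T(1) = P(1)T(1) + P^\dag(1)T(1) - (T(1))^2 < 0$, since the first two summands are negative and $(T(1))^2 > 0$. Equivalently, in the matrix notation of \S\ref{sec:SS4.5S2}, $A\uT + \Lambda = \mbox{}^{t}((T(1))^2, 0, \ldots, 0)$ has all \emph{nonnegative} components, so $A(\uP+\uP^\dag-\uT)+\Lambda = (A\uP+\Lambda)+(A\uP^\dag+\Lambda)-(A\uT+\Lambda)$ has all components strictly negative. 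This is the content of the paper's second proof. The paper also gives an independent first proof via CS-interlacing quotients and a partial-fraction positivity argument in the real variable $x = z + 1/z$, which you may find instructive as an alternative route.
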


\begin{proof} 
Let us give two proofs. 
(1) Using McKee-Smyth interlacing quotients:
by Theorem \ref{theoremA}, 
the polynomials $L:=P-T$ and $L^{\dag}:=P^{\dag}-T$ are monic, reciprocal and satisfy the conditions: 
\begin{itemize}
 \item $L(0)=q-1$ and $L^{\dag}(0)=q^{\dag}-1$,
 \item $\deg(L) = \deg(L^{\dag}) = \deg(T)-1$,
 \item $L(1) \leq -T(1)$ and $L^{\dag}(1) \leq -T(1)$,
 \item the zeroes of $L$ interlace the zeroes of $T$ on the half unit circle and similarly
for the zeroes of $L^{\dag}$.
\end{itemize}
After Definition \ref{CSentrecroisement} 
in \S \ref{sec:SS3S2}, with $K=K^{\dag}=1$, 
$\displaystyle \frac{(z-1)L}{T}$ and $\displaystyle \frac{(z-1)L^{\dag}}{T}$ are CS-interlacing
quotients.
As in the proof of McKee and Smyth's Proposition 6 
in \cite{mckeesmyth0},
we can transform these quotients into real quotients by the Tchebyshev 
transformation $x=z+\frac{1}{z}$ and 
we obtain in the real world the rational function 
$\displaystyle f(x)=\gamma \, \frac{\prod(x-\delta_i)}{\prod(x-\alpha_i)}$ 
and respectively 
$\displaystyle f^{\dag}(x)=\gamma^{\dag} \, \frac{\prod(x-\delta^{\dag}_i)}{\prod(x-\alpha_i)}$. 
These rational functions can be written 
$\displaystyle f(x)=\sum \frac{\lambda_i}{(x-\alpha_i)}$ 
and 
$\displaystyle f^{\dag}(x)=\sum \frac{\lambda^{\dag}_i}{(x-\alpha_i)}$ 
with $\lambda_i > 0$ and $\lambda^{\dag}_i>0$. 
Then $\displaystyle (f+f^{\dag})(x) =\sum \frac{\lambda_i + \lambda^{\dag}_i}{(x-\alpha_i)}$ 
has strictly positive coefficients. Its derivative is everywhere negative. 
Then there is exactly one zero between two successive poles. 
Then, back to the complex world, 
$\displaystyle \frac{(z-1)(L+L^{\dag})}{T}$ is a CS-interlacing quotient.
Recall that $T(1) < 0$ since $T(1/\beta) = T(\beta) = 0$. 
Finally, we have \begin{itemize}
                  \item $(L+L^{\dag})(0) = (q+q^{\dag}-1)-1$,
                  \item $\deg(L+L^{\dag}) = \deg(T)-1$ because $L$ and $L^{\dag}$ are both monic,
                  \item $(L+L^{\dag})(1) \leq -2T(1) \leq -T(1)$ because $T(1) <0$,
                  \item the zeroes of $L+L^{\dag}$ and the zeroes of $T$ interlace on the half unit circle.
                 \end{itemize}
Thus $L+L^{\dag}+T  \in E_{T,q+q^{\dag}-1}^{+}$.

(2) Using the equations of the face hyperplanes of the admissible cone 
$\emph{C}^+$ associated with $T$:
using the notation of \S \ref{sec:SS4.5S2}
and Remark \ref{omega1_CetT}, all the components of the vectors
$A \uP + \Lambda$ and 
$A \uP^{\dag} + \Lambda$ are strictly negative and
$$A \uT + \Lambda = \mbox{}^{t}((T(1))^{2}, 0, \ldots, 0).$$
Then all the components of $ A \bigl(\uP + \uP^{\dag} - \uT\bigr) +\Lambda$
are strictly negative. 
Hence $\uP + \uP^{\dag} - \uT$ belongs to
$\emph{C}^{+}$. Thus $L+L^{\dag}+T  \in E_{T,q+q^{\dag}-1}^{+}$.
\end{proof}

The semigroup law $\oplus$ satisfies: 
  \begin{itemize}
   \item $T \oplus T = T$,
   \item $P \oplus P^{\dag} = P+L^{\dag}= P^{\dag}+L = L+L^{\dag}+T$,
   \item $(P \oplus P^{\dag}) -T = (P-T) + (P^{\dag}-T) $. 
  \end{itemize}
Note that the opposite (additive inverse) of $P \in E_{T,q}^{+}$ is
$2 T - P$, which is outside the set $\mathcal{P}_{T}^{+}$ 
since $2 T(0) - P(0) = 2-q \leq 0$. 
Thus $(\mathcal{P}_{T}^{+} \cup \{T\}, \oplus)$ 
is a semigroup and not a group.

The structure of the set of positive real generalized Garsia numbers, in particular
for those of Mahler measure equal to $2$ (i.e. Garsia numbers in the usual sense), 
is of interest for many purposes
\cite{harepanju}. In the present context of association 
between Salem numbers and generalized Garsia numbers,
the above semigroup structure can be transported to sets
of generalized Garsia numbers and generalized Garsia polynomials as follows. 

\begin{corollary}
If $P$ and $P^{\dag}$ are two generalized 
Garsia polynomials of the same degree $m \geq 4$ over a given
Salem polynomial $T$, such that $P(0)=+ {\rm M}(P), P^{\dag}(0) = + {\rm M}(P^{\dag})$, satisfying the assumption
that $P(0) + P^{\dag}(0) -1$ is a prime number, then 
$P + P^{\dag} - T$ is a generalized Garsia polynomial of degree $m$ 
and Mahler measure equal to
${\rm M}(P) + {\rm M}(P^{\dag}) -1$.
\end{corollary}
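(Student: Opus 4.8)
The plan is to reduce everything to the semigroup theorem just proved together with the elementary fact that, for a monic expansive polynomial with integer coefficients, the Mahler measure equals the absolute value of the constant term. First I would observe that a generalized Garsia polynomial $P$ over $T$ with $P(0) = +{\rm M}(P)$ is exactly a member of $E_{T,q}^{+}$ with $q = {\rm M}(P) = |P(0)|$ (the last equality holding because all roots of a monic expansive polynomial have modulus $>1$), and similarly $P^{\dag} \in E_{T,q^{\dag}}^{+}$ with $q^{\dag} = {\rm M}(P^{\dag})$. Applying the preceding Theorem, $R := P \oplus P^{\dag} = P + P^{\dag} - T$ is a monic expansive polynomial in $\zb[z]$ of degree $m$ lying over $T$, with $R \in E_{T,\,q+q^{\dag}-1}^{+}$; in particular $R(0) = q + q^{\dag} - 1 > 0$.

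Next I would read off the Mahler measure: since $R$ is monic with every root of modulus $>1$, one has ${\rm M}(R) = |R(0)| = q + q^{\dag} - 1 = {\rm M}(P) + {\rm M}(P^{\dag}) - 1$, which is $\geq 3$, hence in particular $\geq 2$.

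The only point that is not a formal translation is the irreducibility of $R$, and this is precisely where the hypothesis that $q + q^{\dag} - 1$ is prime is used. Suppose for contradiction that $R = R_1 R_2$ with $R_1, R_2$ nonconstant. Since $R$ is monic and primitive, Gauss's Lemma lets me take $R_1, R_2 \in \zb[z]$ monic, of degrees $\geq 1$. Every complex root of $R_i$ is a root of $R$, hence of modulus $>1$, so $R_i$ is expansive and $|R_i(0)|$ equals the product of the moduli of its roots, a number strictly greater than $1$; being a nonzero integer it satisfies $|R_i(0)| \geq 2$. Then ${\rm M}(R) = |R(0)| = |R_1(0)|\,|R_2(0)|$ would be a product of two integers $\geq 2$, hence composite, contradicting the primality of $q + q^{\dag} - 1 = {\rm M}(R)$. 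Therefore $R$ is irreducible.

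Putting the three observations together, $R = P + P^{\dag} - T$ is a monic irreducible expansive polynomial with integer coefficients, of degree $m$, with ${\rm M}(R) = {\rm M}(P) + {\rm M}(P^{\dag}) - 1 \geq 2$, i.e. a generalized Garsia polynomial of the claimed degree and Mahler measure. I do not anticipate a genuine obstacle here; the only place demanding a little care is the irreducibility step, where one must invoke Gauss's Lemma to obtain a truly integral factorization and use monicity to exclude degree-zero factors, so that both constant terms are $\geq 2$ in absolute value.
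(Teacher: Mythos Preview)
Your argument is correct and is exactly the proof the paper leaves implicit: the corollary is stated without proof and is meant to follow immediately from the preceding semigroup theorem, the identity ${\rm M}(R)=|R(0)|$ for monic expansive integer polynomials, and the observation that primality of the constant term forces irreducibility. Your handling of the irreducibility step via Gauss's Lemma and the bound $|R_i(0)|\geq 2$ for monic expansive integer factors is precisely what is needed.
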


%-------------------------------------------------------------------------------------------
\section{From expansive polynomials to Hurwitz polynomials, Hurwitz alternants, and continued fractions}
\label{S3}

\subsection{Hurwitz polynomials }
\label{sec:SS1S3}

\begin{definition}
A {\it real Hurwitz polynomial} $H(X)$ is a polynomial in $\rb[X]$ of degree $\geq 1$ %with %all its 
all of whose roots have negative real part. 
\end{definition}

The constructions involved in Theorem \ref{theoremA} %A in Bertin and Boyd \cite{bertinboyd} 
require monic expansive polynomials with coefficients in
$\zb$.  
Since the conformal map $\displaystyle z \mapsto Z=\frac{z+1}{z-1}$
transforms the open unit disk $|z|<1$ to the half plane $Re(Z)<0$, it transforms the
set of zeros of the reciprocal polynomial $P^*$ of 
an expansive polynomial $P$ into the set of zeros of
a Hurwitz polynomial.
In other terms the polynomial $P(z)=p_0 + p_1 z + \ldots +p_m z^m$ is expansive if and only 
if the polynomial $\displaystyle H(z)=(z-1)^m P^*(\frac{z+1}{z-1})$ is a real Hurwitz polynomial. 
In this equivalence, note that $z=0$ is never a root of $P^*$, 
implying that $H$ never vanishes at $z=-1$.
 The analytic transformation $z \to Z$, resp. $Z \to z = (1+Z)/(Z-1)$, 
corresponds to a 
linear transformation of the coefficient vector 
of $P$, resp. of $H$, of $\rb^{m+1}$, as follows, 
the proof being easy and left to the reader.
 
\begin{proposition}
\label{PPtoHHtoPP}
\begin{itemize}
\item[(i)] If $P(z)=p_0+p_1z+...+p_m z^m$ is an expansive polynomial 
then $\displaystyle H(z)=(z+1)^m P(\frac{z-1}{z+1})$ is a real Hurwitz polynomial 
and, denoting $H(z)=h_0+h_1 z+...+h_m z^m$, 
then $h_0 \neq 0$ and
\begin{equation}
\label{PPtoHH}
h_i = \sum_{j=0}^{m} \sum _{k=0}^i  (-1)^{j-k} \binom{j}{k} \binom{m-j}{i-k}  \,p_j , 
\qquad 0\leq i \leq m.
\end{equation}
\item[(ii)] If $H(z)=h_0+h_1z+...+h_m z^m$ is a real Hurwitz polynomial 
such that $H(-1) \neq 0$, 
then $\displaystyle P(z)=(1-z)^m H(\frac{z+1}{1-z})$ is an expansive polynomial 
and, if we put $P(z)= p_0+p_1 z+...+p_m z^m$, then 
\begin{equation}
\label{HHtoPP}
p_i = \sum_{j=0}^{m} \sum _{k=0}^i  (-1)^{i-k} \binom{j}{k} \binom{m-j}{i-k}  \,h_j , 
\qquad 0\leq i \leq m.
\end{equation}
\end{itemize}
\end{proposition}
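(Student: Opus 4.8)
The plan is to handle (i) and (ii) by one common recipe, in three steps: rewrite the substituted expression as an explicit linear combination of the products $(z-1)^{j}(z+1)^{m-j}$ (resp. $(z+1)^{j}(1-z)^{m-j}$), $0\le j\le m$; deduce the coefficient formulas \eqref{PPtoHH}, \eqref{HHtoPP} by multiplying two binomial expansions and reading off the coefficient of $z^{i}$; and check that the degree is preserved and that the roots land where they should by a one-line computation with the relevant M\"obius map.

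For (i), I would start from $P(z)=\sum_{j=0}^{m}p_{j}z^{j}$ and clear denominators:
\[
H(z)=(z+1)^{m}P\!\left(\tfrac{z-1}{z+1}\right)=\sum_{j=0}^{m}p_{j}\,(z-1)^{j}(z+1)^{m-j},
\]
so $H\in\mathbb{R}[z]$ (in fact its coefficients are $\mathbb{Z}$-linear in the $p_{j}$). The coefficient of $z^{m}$ is $\sum_{j}p_{j}=P(1)$, which is nonzero because $1$ is not a root of an expansive polynomial; hence $\deg H=m\ge1$, and similarly $h_{0}=H(0)=P(-1)\neq0$. A point $z$ is a root of $H$ precisely when $\tfrac{z-1}{z+1}$ equals some root $\alpha$ of $P$, i.e. $z=\tfrac{1+\alpha}{1-\alpha}$ (and $\alpha\neq1$), and then
\[
\mathrm{Re}\,\frac{1+\alpha}{1-\alpha}=\frac{1-|\alpha|^{2}}{|1-\alpha|^{2}}<0
\]
since $|\alpha|>1$; so $H$ is a real Hurwitz polynomial. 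Finally I would expand $(z-1)^{j}=\sum_{k}\binom{j}{k}(-1)^{j-k}z^{k}$ and $(z+1)^{m-j}=\sum_{l}\binom{m-j}{l}z^{l}$, and collect the coefficient of $z^{i}$ in $H$, which (with $i=k+l$) is exactly \eqref{PPtoHH}.

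Part (ii) is the mirror image, exploiting that $w\mapsto\tfrac{w+1}{1-w}$ is the M\"obius inverse of $z\mapsto\tfrac{z-1}{z+1}$. Writing
\[
P(z)=(1-z)^{m}H\!\left(\tfrac{z+1}{1-z}\right)=\sum_{j=0}^{m}h_{j}\,(z+1)^{j}(1-z)^{m-j}
\]
gives $P\in\mathbb{R}[z]$; the coefficient of $z^{m}$ is $\sum_{j}(-1)^{m-j}h_{j}=(-1)^{m}H(-1)\neq0$ by the hypothesis $H(-1)\neq0$, so $\deg P=m$, and $p_{0}=P(0)=H(1)\neq0$ because a Hurwitz polynomial has no root of positive real part. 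The roots of $P$ are the points $z=\tfrac{\gamma-1}{\gamma+1}$ with $\gamma$ a root of $H$ (so $\gamma\neq-1$), and from $|\gamma-1|^{2}-|\gamma+1|^{2}=-4\,\mathrm{Re}\,\gamma>0$ one gets $|z|>1$; hence $P$ is expansive. The formula \eqref{HHtoPP} again results from multiplying $(z+1)^{j}=\sum_{k}\binom{j}{k}z^{k}$ by $(1-z)^{m-j}=\sum_{l}\binom{m-j}{l}(-1)^{l}z^{l}$ and taking the coefficient of $z^{i}$, the sign $(-1)^{i-k}$ now coming from the factor $(1-z)^{m-j}$.

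I do not expect any real obstacle here; the only two points needing care are that the leading coefficient never vanishes --- which is where ``$P$ expansive'' (hence $P(1)\neq0$) and the standing hypothesis $H(-1)\neq0$ get used, guaranteeing no drop in degree --- and keeping the sign patterns $(-1)^{j-k}$ versus $(-1)^{i-k}$ straight, depending on whether the minus sign sits in the $(z-1)$- or the $(1-z)$-factor. As a remark one can also check that composing the substitutions of (i) and (ii) returns $2^{m}P(z)$ (the M\"obius maps being mutually inverse, while the prefactors $(z+1)^{m}$ and $(1-z)^{m}$ collapse to the constant $2^{m}$), so the matrices in \eqref{PPtoHH} and \eqref{HHtoPP} are inverse to one another up to the scalar $2^{m}$.
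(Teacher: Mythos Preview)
Your proof is correct, and in fact the paper gives no proof at all for this proposition: it simply states that the transformation ``corresponds to a linear transformation of the coefficient vector \ldots\ the proof being easy and left to the reader.'' Your three steps (clearing denominators to obtain the expansion $\sum_{j}p_{j}(z-1)^{j}(z+1)^{m-j}$, multiplying the two binomial series to read off the coefficient of $z^{i}$, and checking via the M\"obius map that roots land in the correct half-plane or outside the unit disk) are precisely the natural way to fill in this omitted verification, and the care you take with the nonvanishing of the leading coefficient --- using $P(1)\neq 0$ in (i) and the hypothesis $H(-1)\neq 0$ in (ii) --- is exactly what is needed.
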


\noindent
{\bf Notation:} 
using the formulae \eqref{PPtoHH}, since
$h_0 = \sum_{i=0}^{m} (-1)^i p_i = P(-1) \neq 0$, we
denote by
\begin{equation}
\label{omega2}
\omega_2: \mbox{}^{t}(p_0 , p_1 , \ldots , p_{m-1}) \mapsto 
\mbox{}^{t}\Bigl(\frac{h_1}{P(-1)}, \frac{h_2}{P(-1)}, \ldots , \frac{h_m}{P(-1)}\Bigr)
\end{equation}
the mapping which sends the coefficient vector of
the monic polynomial 
$P(z) = p_0+p_1z+...+ z^m$
to that of $H(z)=(P(-1))^{-1} 
\bigl(h_0 +h_1 z+...+h_m z^m\bigr)$ where $P(-1) = \sum_{i=0}^{m-1} (-1)^i p_i + 1$
here; let us also remark that the constant term 
$h_0 / P(-1)$ of this polynomial is equal to $1$.

Conversely, using \eqref{HHtoPP}, since
$p_m = \sum_{i=0}^{m} (-1)^i h_i = H(-1) \neq 0$, we denote by
\begin{equation}
\label{Omega2}
\Omega_2: \mbox{}^{t}(h_1, h_2, \ldots , h_m) \mapsto 
\mbox{}^{t}\Bigl(\frac{p_0}{H(-1)}, \frac{p_1}{H(-1)}, \ldots , \frac{p_{m-1}}{H(-1)}\Bigr)
\end{equation}
the mapping which sends the coefficient vector of
the Hurwitz polynomial $H = 1 + h_1 z+...+h_m z^m$,
having constant coefficient equal to 1, to that of the
polynomial $P(z) = (H(-1))^{-1} \bigl(p_0+p_1z+ \ldots + p_m z^m\bigr)$
where $H(-1) = 1 + \sum_{i=1}^{m} (-1)^i h_i$ here; 
let us remark that the leading coefficient $p_m / H(-1)$ is equal to $1$.
Reversing the order of the terms, we denote by
\begin{equation}
\label{Omega2star}
\Omega^{*}_2: \mbox{}^{t}(h_1, h_2, \ldots , h_m) \mapsto 
\mbox{}^{t}\Bigl(\frac{p_{m-1}}{H(-1)}, \frac{p_{m-2}}{H(-1)}, \ldots , \frac{p_{0}}{H(-1)}\Bigr)
\end{equation}
the mapping which sends the coefficient vector of
the Hurwitz polynomial $H = 1 + h_1 z+...+h_m z^m$ 
to that of the polynomial 
$P^{*}(z) = 1+(H(-1))^{-1} \bigl(p_{m-1} z + \ldots + p_0 z^m\bigr)$
with $H(-1) = 1 + \sum_{i=1}^{m} (-1)^i h_i$.

Hence, the transformation $\Omega_2 \,o\, \omega_2$ is 
the identity transformation on the set of monic expansive polynomials
$P$.

\begin{remark}
\label{remm01}
The transformations \eqref{PPtoHH} and \eqref{HHtoPP} 
are $\zb$-linear.
Hence, for any real field $\mathbb{K}$
and any expansive polynomial $P(z) \in \mathbb{K}[z]$,
the projective classes 
$\mathbb{K} \bigl(\mbox{}^{t}(p_i)\bigr)$ and $\mathbb{K} \bigl(\mbox{}^{t}(h_i)\bigr)$ are
in bijection. This observation 
justifies the definitions
of $\omega_2$ and $\Omega_2$. Indeed, these mappings suitably send a
representative of $\mathbb{K} \bigl(\mbox{}^{t}(p_i)\bigr)$ to a representative
of $\mathbb{K} \bigl(\mbox{}^{t}(h_i)\bigr)$, which allows us to fix one coefficient
in the coefficient vector: 
in the first case the leading coefficient
$p_m$ equal to 1, in the second case the constant term $h_0$ equal to 1.
\end{remark}

\begin{theorem}  [Stodola, 1893]
 If $H(z)=h_0+h_1 z + h_2 z^2 + .. +h_m z^m \in \mathbb{R}[z]$ is a Hurwitz polynomial 
 then $(h_i)_{0 \leq i \leq m}$ are either all positive or all negative.
\end{theorem}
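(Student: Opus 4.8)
The plan is to exploit the real factorization of $H$ into irreducible factors of degree $1$ and $2$, each of which has coefficients of one sign. First I would normalize: replacing $H$ by $-H$ if necessary — which changes neither the roots nor the assertion to be proved — we may assume the leading coefficient $h_m>0$. Since $H\in\mathbb{R}[z]$, its non-real roots occur in complex-conjugate pairs; writing the real roots as $-a_j$ (with $a_j>0$, since they lie in the open left half-plane and $0$ is not a root) and the non-real roots as $-\alpha_k\pm i\beta_k$ (with $\alpha_k>0$, $\beta_k\neq 0$), we get
\[
H(z) = h_m \prod_{j} (z + a_j)\,\prod_{k} \bigl(z^2 + 2\alpha_k z + (\alpha_k^2 + \beta_k^2)\bigr).
\]

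Next I would observe that every linear factor $z+a_j$ has strictly positive coefficients, and every quadratic factor $z^2+2\alpha_k z+(\alpha_k^2+\beta_k^2)$ likewise has strictly positive coefficients. It then suffices to record the elementary fact that a product of two polynomials with strictly positive coefficients again has strictly positive coefficients: if $A(z)=\sum a_i z^i$ and $B(z)=\sum b_j z^j$ have positive coefficients and degrees $d_A,d_B$, then for each $k$ with $0\le k\le d_A+d_B$ there is at least one pair $(i,j)$ with $i+j=k$, $0\le i\le d_A$, $0\le j\le d_B$, so the coefficient of $z^k$ in $AB$ is a sum of nonnegative terms, at least one of which ($a_i b_j$) is strictly positive. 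By induction on the number of factors, $\prod_j(z+a_j)\prod_k(z^2+2\alpha_k z+\alpha_k^2+\beta_k^2)$ has all coefficients strictly positive, and multiplying by $h_m>0$ gives $h_i>0$ for all $i$. Undoing the normalization shows the $h_i$ are all positive or all negative (in particular none vanishes, consistently with $0$ not being a root).

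There is essentially no real obstacle here; the only points needing a word of care are the elementary lemma that multiplication of polynomials with positive coefficients preserves positivity of every coefficient, and the remark that the real irreducible factors of a Hurwitz polynomial are exactly $z+a$ with $a>0$ and $z^2+bz+c$ with $b,c>0$, both of which follow immediately from the location of the roots in $\{\operatorname{Re} z<0\}$.
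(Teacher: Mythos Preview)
Your proof is correct and follows essentially the same approach as the paper: factor $H$ over $\mathbb{R}$ into linear and quadratic factors coming from the real and complex-conjugate roots, observe that each factor has strictly positive coefficients because the roots lie in the open left half-plane, and conclude that the product inherits this sign. Your version is slightly more explicit in justifying the closure of positive-coefficient polynomials under multiplication, which the paper leaves implicit.
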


\begin{proof} 
 Let $r_1, r_2, .., r_p$ be the real zeros of a real polynomial $H(z)$ and 
 let $\alpha_1, \alpha_2,\ldots, \alpha_n, \overline{\alpha_1}, \overline{\alpha_2},\ldots, \overline{\alpha_n}$ 
be its nonreal zeros.
 Then we have the factorization : 
 $$H(z)=h_m \prod_{1 \leq k \leq p} (z-r_k) \prod_{1 \leq j \leq n} (z-\alpha_j)(z-\overline{\alpha_j})$$
 $$H(z)=h_m \prod_{1 \leq k \leq p} (z-r_k) \prod_{1 \leq j \leq n} (z^2-2 \operatorname{Re}(\alpha_j)z + |\alpha_j|^2)$$
 Now, $H(z)$ is a Hurwitz polynomial if $r_k <0$, for all $1 \leq k \leq r$, and 
$\operatorname{Re}(\alpha_j)<0$, for all $1 \leq j\leq n$. 
Then all the coefficients of all the factors are positive. 
Thus, if $h_m >0$, all the coefficients of $H(z)$ are positive and
 if  $h_m<0$, then they are all negative.
\end{proof} 

\begin{remark}
 From now on, we consider that the coefficients $h_i$ of the real Hurwitz polynomials are all 
(strictly) positive 
(otherwise, we will consider $-H(z)$).
\end{remark}

%----------------------------------------------------------------------------------------------------
\subsection{Hurwitz alternants and Stieltjes continued fractions }
\label{sec:SS2S3}

\begin{definition}
\label{hurwitzquotientDEF}
The {\it quotient (alternant) of Hurwitz} $h(z)$ associated with a polynomial 
$H(z)=h_m x^m+h_{m-1} x^{m-1} + \ldots + h_1 x +h_0 \in \mathbb{R}_{> 0}[z]$ is the rational
function 
\begin{equation}
\label{hurquo}
h(z):=\frac{h_1+h_3z+h_5z^2+..}{h_0+h_2z+h_4z^2+..}= 
\frac{\sum_{i=0}^{\lfloor{(m-1)/2}\rfloor} h_{2i+1} z^i }{\sum_{i=0}^{\lfloor{m/2}\rfloor} h_{2i} z^i}.
\end{equation}
\end{definition}

The continued fraction of the quotient of Hurwitz $h(z)$ 
of \eqref{hurquo} is
$$h(z)=\cfrac{h_1+h_3z+h_5z^2+..}{h_0+h_2z+h_4z^2+..}
      =\cfrac{ \cfrac{h_1}{h_0}  (1+\cfrac{h_3}{h_1} z + \cfrac{h_5}{h_1}z^2 + ... )} 
           {1+\cfrac{h_2}{h_0} z + \cfrac{h_4}{h_0}z^2 + ... }$$
$$= \cfrac{h_1 / h_0}{1 + z \left[ \cfrac{ (h_1 h_2-h_0 h_3) + (h_1 h_4-h_0 h_5) z + (h_1 h_6-h_0 h_7) z^2...}{h_0 h_1 + h_0 h_3 z+ h_0 h_5 z^2+ ...} \right] }$$
$$= \cfrac{\frac{h_1}{h_0}}{1 +  \cfrac{\frac{h_1 h_2-h_0 h_3}{h_0 h_1} \, z}{1 + z \left[...\right] }} 
~=~ \cfrac{f_1}{ 1+ \cfrac {f_2 z}{ 1+\cfrac{f_3 z}{ 1+ \cfrac{...}{1+\ldots}}}} \quad {\rm written} 
=: \big[f_1 / f_2 / f_3 / \ldots\bigr](z)$$
for short, with $f_1 = h_1 / h_0$, $f_2 = (h_1 h_2-h_0 h_3)/(h_0 h_1)$, $\ldots$ 
Stieltjes \cite{stieltjes} (1894) has extensively studied these continued fractions, 
their Pad\'e approximants and the expressions of the numerators $f_j$ as
Hankel determinants (Henrici \cite{henrici}, Chap. 12, p 549 and 
pp 555--559, where 
they are called SITZ continued fractions).
For $H$ as in Definition \ref{hurwitzquotientDEF}, 
denote $D_0 := 1, D_{-1} := h_{0}^{-1}, D_{-2} := h_{0}^{-2}$, 
$$D_1 = h_1, 
~D_2 = \left|\begin{array}{cc}
h_1 & h_3\\
h_0 & h_2
\end{array}
\right|, 
~D_3 = \left|
\begin{array}{ccc}
h_1 & h_3 & h_5\\
h_0 & h_2 & h_4\\
0 & h_1 & h_3
\end{array}
\right|$$
and generally $$D_j := \det\bigl( h_{j}^{(1)}, h_{j}^{(3)}, \ldots, h_{j}^{(2 j - 1)} \bigr),$$ 
where $h_{j}^{(r)}$ denotes the 
$j$-dimensional column vector whose components are the 
first $j$ elements of the sequence
$h_r, h_{r-1}, h_{r-2}, \ldots$ ($h_r :=0$ for $r > m$ and for
$r<0$). Then
\begin{equation}
\label{fjHANKEL}
f_j = \frac{D_j}{D_{j-1}} \frac{D_{j-3}}{D_{j-2}}\,, \qquad \qquad j = 1, 2, \ldots, m .
\end{equation}
\begin{definition}
A rational function $F(z) \in \mathbb{R}(z)$ is a 
$m$-terminating continued fraction if it exist $f_1 , f_2 , \ldots, f_m \in \rb, f_m \neq 0$, such that
$F(z) = \big[f_1 / f_2 / \ldots / f_m\bigr](z)$.

\end{definition}
\begin{theorem} [Hurwitz, 1895]
\label{fipos}
The polynomial $H(z) \in \mathbb{R}[z]$, of degree $m \geq 1$, 
has all his roots on the left half plane $Re(z)<0$ if and only if 
the Hurwitz alternant $h(z)$ of $H(z)$ can be represented by an 
$m$-terminating continued fraction $\big[f_1 / f_2 / \ldots / f_m\bigr](z)$ in which every number $f_k$ is 
(strictly) positive.
\end{theorem}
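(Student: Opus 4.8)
The plan is to normalise via Stodola's theorem, reformulate ``$H$ Hurwitz'' as an interlacing property of two auxiliary real polynomials by the Hermite--Biehler theorem, and then invoke Stieltjes' classical description of the continued fraction of a ratio of such interlacing polynomials. By Stodola's theorem a Hurwitz polynomial has all coefficients of one sign, and having all coefficients positive is in any case necessary for all roots to lie in $\operatorname{Re} z<0$; so throughout one may assume $h_i>0$ for $0\le i\le m$. Write $P(w)=h_0+h_2 w+h_4 w^2+\ldots$ and $Q(w)=h_1+h_3 w+h_5 w^2+\ldots$, so that $H(z)=P(z^2)+zQ(z^2)$ and $h(z)=Q(z)/P(z)$ is the Hurwitz alternant; since all $h_i>0$ one has $\deg P+\deg Q+1=\deg H=m$. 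The Hermite--Biehler theorem states that $H(z)=P(z^2)+zQ(z^2)$ of degree $m$ is a Hurwitz polynomial if and only if $P$ and $Q$ have only real simple roots, all of which are \emph{negative}, and these roots interlace on $(-\infty,0)$ (the leading and constant coefficients of $P,Q$ being positive, which is automatic here). The direction actually needed, ``$H$ Hurwitz $\Rightarrow$ the interlacing property'', I would get from the argument principle on the imaginary axis: for a degree-$m$ polynomial with all zeros in $\operatorname{Re} z<0$, the total variation of $\arg H(iy)$ as $y$ runs over $\mathbb{R}$ equals $m\pi$ and $\arg H(iy)$ is strictly monotone; writing $H(iy)=P(-y^2)+iy\,Q(-y^2)$, strict monotonicity of $\arg H(iy)$ forces the zeros of $P(-y^2)$ and of $y\,Q(-y^2)$ to be real, simple and interlacing on $\mathbb{R}$, and undoing the substitution $w=-y^2$ turns this into the stated condition on $P$ and $Q$. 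The converse direction is a straightforward winding-number count: if $P,Q$ satisfy the interlacing condition then $\arg H(iy)$ increases by exactly $m\pi$, so $H$ has no zero with $\operatorname{Re} z\ge 0$.

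It then remains to quote the classical theorem of Stieltjes \cite{stieltjes} (see Henrici \cite{henrici}, Chap.~12, pp.~549 and 555--559): a rational function $Q(z)/P(z)$ with $P,Q\in\mathbb{R}[z]$ having only simple negative real roots that interlace on $(-\infty,0)$, and positive leading and constant coefficients, is exactly a rational function representable as an $m$-terminating Stieltjes continued fraction $[f_1/f_2/\ldots/f_m](z)$ with every $f_k$ strictly positive, where $m=\deg P+\deg Q+1$; here the $f_k$ are given by the Hankel-determinant formula \eqref{fjHANKEL}, and positivity of $f_k$ is equivalent, step by step, to the algorithm that extracts $f_k=h_1^{(k-1)}/h_0^{(k-1)}$ and passes to a degree-$(m-k)$ polynomial $H^{(k)}$ never producing a vanishing, infinite or negative partial numerator. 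Combining the two equivalences --- ``$H$ Hurwitz $\iff$ $P,Q$ interlace'' and ``$P,Q$ interlace $\iff$ $h=Q/P$ has an $m$-terminating positive continued fraction'' --- yields the theorem; a convenient self-contained variant runs the reduction $H\to H^{(1)}$ directly and finishes by induction on $m$ (base case $m=1$: $H=h_0+h_1z$ has its only root at $-h_0/h_1<0$ and $h(z)=h_1/h_0=f_1>0$). Alternatively, using the conformal map $z\mapsto(z+1)/(z-1)$ and Proposition~\ref{PPtoHHtoPP}, ``$H$ Hurwitz'' is equivalent to ``the associated $P$ is expansive'', and one may then invoke Burcsi's coding \cite{burcsi} of expansive polynomials by positive Stieltjes continued fractions.

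The main obstacle is the one-step reduction lemma underlying both the Hermite--Biehler dictionary and Stieltjes' theorem: that passing from $H^{(k-1)}$ to $H^{(k)}$ --- equivalently, subtracting from $h$ its leading Stieltjes term --- moves no root across the imaginary axis precisely when $f_k>0$. I would make this rigorous by a homotopy argument: interpolate between $H^{(k-1)}$ and a scalar multiple of $H^{(k)}$ through polynomials $H_t$ of constant degree with $H_t(iy)\neq 0$ for all real $y$ and $t\in(0,1]$ (which holds as long as $f_k\neq 0$, since a zero on the imaginary axis would force the corresponding partial numerator to vanish), so that the number of roots in $\operatorname{Re} z<0$ is locally constant in $t$, and then compare the two endpoints. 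Everything else --- Stodola, the degree bookkeeping, the base case, and the identification \eqref{fjHANKEL} of the $f_k$ with ratios of Hankel determinants $D_j$ --- is routine.
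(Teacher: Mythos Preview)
The paper does not actually prove this theorem: its entire proof is the one-line citation ``Henrici \cite{henrici}, Theorem 12.7c.'' So there is no argument in the paper to compare against beyond that reference. Your proposal, by contrast, supplies a genuine proof outline, and the route you take --- normalise signs via Stodola, split $H(z)=P(z^2)+zQ(z^2)$, use Hermite--Biehler to translate ``$H$ Hurwitz'' into the interlacing of the negative real zeros of $P$ and $Q$, then invoke Stieltjes' characterisation of $Q/P$ as an $m$-terminating continued fraction with positive partial numerators --- is exactly the classical line of argument that Henrici's Chapter 12 develops (and which underlies the Routh--Hurwitz scheme). The inductive reduction $H^{(k-1)}\to H^{(k)}$ you sketch is the standard Routh step; your homotopy justification that no root crosses the imaginary axis when $f_k>0$ is a correct way to make that step rigorous, and the base case $m=1$ is as you say.

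Two minor remarks. First, strictly speaking the Hurwitz alternant is only defined in the paper for $H\in\mathbb{R}_{>0}[z]$ (Definition~\ref{hurwitzquotientDEF}), so your appeal to Stodola at the outset is not just a convenience but is needed even to make sense of the statement for general $H\in\mathbb{R}[z]$; you handle this correctly. Second, your alternative route via Proposition~\ref{PPtoHHtoPP} and Burcsi's coding is circular in this context: Burcsi's characterisation of expansive polynomials by positive Stieltjes data rests on the present theorem (or an equivalent), so it cannot be used to prove it. Drop that sentence; the Hermite--Biehler/Stieltjes argument stands on its own.
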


\begin{proof} Henrici \cite{henrici}, Theorem 12.7c.
\end{proof}
If $h(z)$ is a $m$-terminating rational fraction, given by \eqref{hurquo}, 
then it is in its lowest terms, i.e. 
the numerator $\sum_{i=0}^{\lfloor{(m-1)/2}\rfloor} h_{2i+1} z^i$ 
and the denominator $\sum_{i=0}^{\lfloor{m/2}\rfloor} h_{2i} z^i$
of $h(z)$ have no common zeros (\cite{henrici}, Theorem 12.4a). 
Two real Hurwitz polynomials
$H_1$ and $H_2$ such that $H_1 / H_2 \in \rb^*$ have the same Hurwitz quotient by
\eqref{hurquo}.
The real Hurwitz polynomials $H$ obtained by 
$\omega_2$ (Proposition \ref{PPtoHHtoPP}) 
from expansive polynomial $P$ in $\zb\left[z\right]$  
have coefficients in $\zb$ and even degrees. Therefore 
their Hurwitz alternants are represented by $m$-terminating
continued fractions $\big[f_1 / f_2 / \ldots / f_m\bigr](z)$ with 
$f_j \in \qb_{>0}$ for $1 \leq j \leq m$, such that 
$h(1) \neq 1$ since $H(-1) \neq 0$.
Recall that $h(x) > 0$ for $x \geq 0$ in general, with
$h(\{{\rm Im}(z) > 0\}) \subset \{{\rm Im}(z)<0\}, 
h(\{{\rm Im}(z) < 0\}) \subset \{{\rm Im}(z)>0\}$
(\cite{henrici}, p 553).

In the case where all the $f_j$s are positive rational numbers,
following Stieltjes \cite{stieltjes}, the $m$-terminating continued fraction
$\big[f_1 / f_2 / \ldots / f_m\bigr](z)$ 
can also be written with integers, say $e_0 , e_1 , e_2 , \ldots, e_m$, as
$$\cfrac{e_1}{ e_0+ \cfrac {e_2 z}{ e_1+\cfrac{e_3 z}{ e_{m-2}+ \cfrac{...}{e_{m-2}+e_m z}}}}
=
\cfrac{\cfrac{e_1}{e_0}}{1+\cfrac {\cfrac{e_2 }{e_0 e_1}z}{ 1+  
\cfrac{...}{1+\cfrac{e_m}{e_{m-2}} z}}}$$
with $\displaystyle f_1=\frac{e_1}{e_0}$, $\displaystyle f_m=\frac{e_m}{ e_{m-2}}$ and 
$\displaystyle f_i=\frac{e_i}{e_{i-2} e_{i-1}}$ for $2 \leq i \leq m-1$.
A Hurwitz polynomial $H(z)$ associated with $h(z)$ is given by
the $m+1$ positive integers $e_0, e_1, e_2 ,  ... ,e_m$ with 
gcd$(e_0, e_1 , \ldots, e_m)$=1.

\begin{example}
\label{exxx01}
The Salem number $\beta = 1.582347\ldots \in A_2$ of degree six, whose minimal polynomial is $T(x)=z^6-z^4-2z^3-z^2+1$, 
admits the polynomial $P(z)=z^6 + z^5 + z^4 + z^2 + 2z +2$ as expansive polynomial lying over $T$ by
$(z-1) T(z) = z P(z) - P^*(z)$. By the mapping $z\mapsto Z$ and $\omega_2$
(Proposition \ref{PPtoHHtoPP}) we obtain the Hurwitz polynomial 
$H(Z)=4 Z^6 + 5 Z^5 + 29 Z^4 + 10 Z^3 + 14 Z^2 + Z + 1$.
$H(z)$ is represented by the continued fraction of the Hurwitz alternant:
$\displaystyle [f_1/f_2/.../f_6](z) = [1 / 4 / 4 / 5 / \frac{4}{5} / \frac{1}{5}](z)$
with $e_0=e_1=1, e_2 = 4 , e_3 = 16 , e_4 = 320 , e_5 = 4096, e_6 = 64$. 

However, though simple and suggested in \cite{henrici} as one of the basic transformations
of Stieltjes (\cite{stieltjes}, pp J1-J5), 
the coding of the Hurwitz alternants in integers is not practical from a 
numerical viewpoint since we quickly obtain very large integers
$e_i$ for many Salem numbers.
For instance, for the Lehmer number $\theta = 1.176\ldots$ of minimal polynomial
$T(z)= z^{10}+z^9-z^7-z^6-z^5-z^3+z+1$, of degree 10, the two expansive polynomials 
$P_1(z) = z^{10}+2z^9+z^8 +z^2 + 2z+2$ and 
$P_2(z) = z^{10}+z^9+z^8-z^7-z^6-z^5-z^4-z^3 + 2z+2$
lie over $T$, producing $\theta$.
The Hurwitz alternant $h_1$ of $H_1 = \omega_2 (P_1)$, resp.  $h_2$ 
of $H_2 = \omega_2 (P_2)$, is
$$h_1(z)=\frac{10z^4+120z^3+252z^2+120z+10}{9z^5+269z^4+770z^3+434z^2+53z+1},$$
$$\mbox{} {\rm resp.} \qquad 
h_2(z)=\frac{4z^4+112z^3+280z^2+112z+4}{3z^5+261z^4+798z^3+426z^2+47z+1}.$$
For $h_1$, $e_0 = 1, e_1 = 10, e_2 = 410, e_3 = 8320, e_4 \approx
2.272 \, 10^7 , e_5 \approx 1.772 \, 10^{11},$ 
$e_6 \approx 1.944 \, 10^{18}, e_7 \approx
2.594 \, 10^{29} , e_8 \approx 1.414 \, 10^{18} , e_9 \approx 1.151 \, 10^{47} , 
e_{10} \approx 1.528 \, 10^{64}$.
For $h_2$, $e_0 = 1, e_1 = 4, e_2 = 76, e_3 = 2816 , e_4 = 3329024, e_5 = 6335496192,$ 
$e_6 \approx 3.189 \, 10^{16}, e_7 \approx 8.641 \, 10^{15}, e_8 \approx \, 10^{26} , 
e_9 \approx 1.755 \, 10^{41} ,$ 
$e_{10} \approx 3.758 \, 10^{55}$.
\end{example}

{\bf Notation:}  
putting $H(z) = 1 + h_1 z + h_2 z^2 + \ldots + h_m z^m$, a real Hurwitz
polynomial, we denote by
\begin{equation}
\label{omega3}
\omega_3: \rb^m \to (\rb^{+})^m, 
~~\mbox{}^{t}(h_1 , h_2 , \ldots , h_{m}) \mapsto 
\mbox{}^{t}(f_1 , f_2 , \ldots , f_m)
\end{equation}
the mapping which sends the coefficient vector 
of $H$ to the $m$-tuple $(f_j)$ given
by \eqref{fjHANKEL},
where each $f_j$,
strictly positive, belongs to the real field 
$\qb(h_1 , \ldots, h_m)$ generated by the 
coefficients of $H$. 
We consider the $m$-tuple $(f_1,\ldots, f_m)$ as
the set of coordinates of a point of the $m$th-Euclidean space
$\rb^m$, in the canonical basis, generically denoted by $F$.
Conversely (since Salem numbers have even degrees, 
together with the Salem, expansive and Hurwitz polynomials
we consider, we only discuss the case where $m \geq 2$ is even), 
we denote by
\begin{equation}
\label{Omega3}
\Omega_3: \bigl(\rb^{+}\bigr)^{m} \to \rb^m, ~\mbox{}^{t}(f_1, f_2, \ldots , f_m) \mapsto 
\mbox{}^{t}(h_1 , h_2, \ldots , h_m)
\end{equation}
the mapping which sends the $m$-tuple $(f_j)$
of positive real numbers
to the coefficient vector of  
$H(z) = 1 + h_1 z + h_2 z^2 + \ldots + h_m z^m$
by the relations
\begin{equation}
\label{exterior01}
\mbox{for $0 < i \leq \frac{m}{2}$}:\qquad \qquad 
\qquad h_{2i} = \sum_{\{(l_1 , l_2 , \ldots, l_i)\}} ~\prod_{j=1}^{i} f_{l_j}
\end{equation}
where the sum is taken over all $i$-tuples $(l_1 , \ldots, l_i)$ such that 
$l_s \in \{2, \ldots, m\}$ for $s=1, \ldots, i$, and
$l_r - l_s \geq 2$ for all $r > s$,
\begin{equation}
\label{exterior02}
\mbox{for $ 0 \leq i \leq \frac{m}{2} -1$}: 
\qquad \qquad
h_{2i+1}=f_1 \sum_{\{(l_1 , l_2 , \ldots, l_i)\}} ~\prod_{j=1}^{i} f_{l_j}
\end{equation}
where the sum is taken over $i$-tuples $(l_1 , \ldots, l_i)$ such that 
$l_s \in \{3, \ldots, m\}$ for $s=1, \ldots, i$, and
$l_r - l_s \geq 2$ for all $r > s$.

The map $\omega_3 \circ \Omega_3$ is the identity transformation
on $\bigl(\rb^{+}\bigr)^{m}$ and $\Omega_3 \circ \omega_3$ 
is the identity mapping on the set of real Hurwitz
polynomials of degree $m$ 
having constant term equal to $1$.

%--------------------------------------------------------------------------------------
\subsection{Salem numbers from $m$-terminating continued fractions}
\label{sec:SS3S3}

Let $\beta$ be a Salem number. 
Let $m \geq \deg \beta$ be an even integer.
Let $T(X)$ be a Salem polynomial
vanishing at $\beta$, of degree $m$, having simple roots
such that $T(\pm 1) \neq 0$.
In the sequel, for convenience, we denote
$$
\begin{array}{llcllc}
P = \omega_{1} (T) & {\rm instead ~of} & \eqref{omega1} ; & 
T = \Omega_{1} (P) & {\rm instead ~of} & \eqref{Omega1} ;\\
H = \omega_{2} (P) & {\rm instead ~of} & \eqref{omega2} ; & 
P = \Omega_{2} (H) & {\rm instead ~of} & \eqref{Omega2} ;\\
F = \omega_{3} (H) & {\rm instead ~of} & \eqref{omega3} ; & 
H = \Omega_{3} (F) & {\rm instead ~of} & \eqref{Omega3} ,\\
\mbox{} \hspace{-0.7cm} {\rm and} & & & & & \\
P^{*} = \Omega^{*}_{2} (H) & {\rm instead ~of} & \eqref{Omega2star}, & & &
\end{array}
$$
where $P$ (expansive) is always monic, and $H$ (real Hurwitz) and
$P^{*}$ always have
a constant term equal to $1$. 
The Hurwitz alternant of $H$ is
$h_{P}(z) = h(z)=[f_1 / f_2 / \ldots /f_m](z)$ with $F = \mbox{}^{t}(f_1, f_2, \ldots, f_m)$,
where the subscript ``{\it P}" in $h_{P}(z)$ 
refers to the expansive polynomial $P$.
If $\beta$ belongs to the class $A_q , q \geq 2$, then the Mahler measure 
M$(P)=q$, and 
Theorem \ref{theoremA} amounts to the following:
\begin{equation}
\label{BASIC}
(z-1) T(z) = \bigl( z (\Omega_2 \circ \Omega_3) - (\Omega^{*}_2 \circ \Omega_3)\bigr)(F), 
\end{equation}
\begin{equation}
\label{BASICexpansive}
P(z) = \omega_1 (T)(z) = (\Omega_2 \circ \Omega_3)(F),
\end{equation}
with $L(X) = P(X) - T(X) = \omega_{1} (T)(X) - T(X)$ the reciprocal polynomial 
whose roots, all of modulus $1$,
interlace with those of $T$ on $|z|=1$.

\begin{definition}
\label{interlacingconjugates}
We call {\it interlacing conjugates of $\beta$}, with respect to $T$ and $P$, all on the unit circle, 
the roots of $L$ and the roots of modulus $1$ of $T$.
\end{definition}
 
The set of interlacing conjugates of $\beta$ is the union of
the subcollection of the Galois conjugates of $\beta$ of modulus $1$, and
the subcollection of the roots of $L$ and
the roots of unity which are zeroes of $T$.
There are finitely many interlacing configurations on the unit circle
by Proposition \ref{EqFINITE}.

Once $T$ is fixed, $m \geq 4$ even, with the notations of
\S \ref{sec:SS4.7S2}, denote
$$\mathcal{F}_{T}^{+} := 
\left\{ F = \omega_{3} \circ \omega_{2}(P) \mid
P \in \mathcal{P}_{T}^{+}
\right\}, \, 
\mathcal{F}_{T}^{-} := 
\left\{ F = \omega_{3} \circ \omega_{2}(P) \mid
P \in \mathcal{P}_{T}^{-}
\right\},$$
$$\mathcal{F}_{T} := 
\mathcal{F}_{T}^{+}
\cup 
\mathcal{F}_{T}^{-}
\subset \bigl(\qb_{> 0}\bigr)^m.$$

\noindent
{\it Proof of Theorem \ref{main2}:}

\vspace{0.1cm}

(i) The set $\mathcal{P}_{T}$ of monic expansive polynomials $P$ over
$T$, viewed as a point subset of $\zb^m$ by their coefficients vectors, 
is uniformly discrete: it has a minimal interpoint distance equal to 1.
Considering $\mathcal{P}_{T}$ and 
$\mathcal{F}_{T}$ embedded in $\cb^m$,
the maps $\omega_2$ and $\omega_3$, defined on
$\cb^m$ by \eqref{omega2} and \eqref{omega3} respectively, are analytical.
Invoking the Theorem of the open image for the analytic mapping
$\omega_3 \circ \omega_2$, every open disk centered at a point
$P$ in $\mathcal{P}_{T}$ is sent to an open neighbourhood
of its image point $F = \omega_3 \circ \omega_2 (P)$.
Therefore $\mathcal{F}_{T}$ is a 
union of isolated points 
in the octant $(\rb_{> 0})^m$.

(ii) Suppose that the affine dimension is less than $m$. Then
there would exist an affine hyperplane $y(x_1,\ldots, x_m)=0$
containing $\mathcal{F}_{T}$ in $\rb^m$, and in particular
$\mathcal{F}_{T}^{+}$. The image
of this affine hyperplane by $\Omega_2 \circ \Omega_3$ would be
a hypersurface of $\rb^m$ and the set $\mathcal{P}_{T}^{+}$ 
of monic expansive polynomials $P$ over $T$ would be included in it.
But there is a bijection between $P$ and $\underline{P}$
and the discrete sector $\emph{C}^{+} \cap \zb^{m/2}$ has affine dimension $m/2$.
Contradiction.

(iii) By transport of the internal law $\oplus$ on 
$\mathcal{P}_{T}^{+} \cup \{T\}$, we obtain a semigroup
structure on $\mathcal{F}_{T}^{+} \cup \{\omega_3 \circ \omega_2 (T)\}$,
where the neutral element is $\omega_3 \circ \omega_2 (T)$. Let us %now 
show that
$$\displaystyle \omega_3 \circ \omega_2 (T) (z) =\frac{h_1+h_3z+h_5z^2+..}{h_0+h_2z+h_4z^2+..}= 0.$$
In fact,
$$
\begin{aligned}
\displaystyle h_{2i+1} &= \sum_{j=0} ^{m} \sum_{k=0}^{2i+1} (-1)^{j-k} \binom{j}{k} \binom{m-j}{2i+1-k} t_j \\
&=\sum_{j=0} ^{\frac{m}{2}-1}  \sum_{k=0}^{2i+1} (-1)^{j-k} \binom{j}{k} \binom{m-j}{2i+1-k} t_j\\
&+ \sum_{j=\frac{m}{2}} ^{m}  \sum_{k=0}^{2i+1} (-1)^{j-k} \binom{j}{k} \binom{m-j}{2i+1-k} t_{m-j}\\
&= \sum_{j=0} ^{\frac{m}{2}-1}  \sum_{k=0}^{2i+1} (-1)^{j-k} \binom{j}{k} \binom{m-j}{2i+1-k} t_j\\ 
&+ \sum_{j=0} ^{\frac{m}{2}-1}  \sum_{l=0}^{2i+1} (-1)^{m-j-2i-1+l} \binom{m-j}{2i+1-l} \binom{j}{l} t_{j}\\
&= \sum_{j=0} ^{\frac{m}{2}-1}  \sum_{k=0}^{2i+1} ((-1)^{j-k}+(-1)^{-j-1 +k})  \binom{j}{k} \binom{m-j}{2i+1-k} t_j\\
 &=0.\\
\end{aligned}
$$
Since $T(-1) \neq 0$, the coefficient $h_0$ is not equal to $0$.
The image of $T$ by $\omega_2$ is of type $\mbox{}^{t}(h_0 , 0, h_2 , 0, h_4 , 0, \ldots, 0 , h_m)/h_0$.
It admits a Hurwitz alternant $h$ equal to $[0/0/\ldots/0](z)$, where $0$ occurs $m$ times,
corresponding to the point $F=0$ in $\rb^m$. 
Indeed, all the coefficients
$f_j , j=1, \ldots, m,$ are equal to $0$, by \eqref{fjHANKEL}, 
since the top rows of the 
determinants $D_j$ are all made of zeros.

(iv) The real Hurwitz polynomials $H$ obtained as images of elements
of $\mathcal{P}_{T}$ by $\omega_2$ never vanish at $z=-1$.
The image under $\omega_3$ of the hyperplane of $\rb^m$ 
of equation $\{\mbox{}^{t}(h_1, h_2, \ldots, h_m) \in \rb^m \mid
1 - h_1 + h_2 - \ldots + h_m = 0\}$ is an hypersurface
which does not contain any point $F$ of $\mathcal{F}_{T}$.
$\Box$

Once  $T$ is fixed, and $P$ chosen over $T$,
the geometrical coding
\begin{equation}
\label{correspondance}
( \, \beta, T, P\,) 
\quad \longleftrightarrow \quad F = \omega_{3} \circ \omega_{2} \circ \omega_{1}(T) = 
\left(\begin{array}{c}
f_1\\
\vdots
\\ f_m
\end{array}\right) \in (\qb_{>0})^{m}
\end{equation}
shows that the Stieltjes continued fraction  $F$ 
entirely controls $\beta$ and simultaneously all the interlacing conjugates
of $\beta$, by \eqref{BASIC} and \eqref{BASICexpansive}. 
When $P$ runs over the complete set of monic expansive 
polynomials over $T$, $F$ runs over $\mathcal{F}_{T}$.
If $F_1$ and $F_2 \neq F_1$ are in $\mathcal{F}_{T}$, they
give rise to equivalent codings of the {\it same} 
Salem number $\beta$.

As for the commutative semigroup law on Hurwitz alternants,
also denoted by $\oplus$,
Theorem \ref{main2} implies that the following diagram
is commutative, for
$P, P^{\dag} \in \mathcal{P}_{T}^{+}$:
$$
\begin{array}{cccc}
(P, P^{\dag}) & \to & P \oplus P^{\dag} & := P + P^{\dag} - T\\
 & & & \\
\downarrow & & \downarrow & \\
 & & & \\
(h_P , h_{P^{\dag}}) & \to & h_{P} \oplus h_{P^{\dag}} & := h_{P \oplus P^{\dag}}
\end{array}
$$ 

\noindent
\section*{Appendix: ~Proof of Lemma \ref{intervals_nonexistence}.}

Let $a$ and $b > a > 1$ be two real numbers such that the open interval
$(a,b)$ contains no Salem number. If there exist Salem numbers
arbitrarily close to $1$, i.e. if the adherence set $\overline{ \rm T}$ contains $1$,
then there would exist 
$\tau_0 ={\rm M}(\tau_0) \in $ T such that:
$0 \neq \log (\tau_0) < \log b -\log a$.
The greatest integer $s = \lfloor \frac{\log a}{\log(\tau_0)} \rfloor > 0$ such that
$s \log(\tau_0) \leq \log a$ satisfies: 
$\log a < (s+1) \log(\tau_0) < \log b$. Indeed,
if $(s+1) \log(\tau_0) > \log b$, then we would have:
$s \log(\tau_0) > \log b - \log(\tau_0) > \log a$. Contradiction.
Then $a < \tau_{0}^{s+1} < b$. But
$\tau_{0}^{s+1}$ is a Salem number since $\tau_{0} \in $ T (Salem \cite{salem0} p. 106).
Contradiction.

%----------------------------------------------------------------------------------------
\section*{Acknowledgements}

The authors are indebted to M.-J. Bertin for her interest and valuable comments and discussions.

\frenchspacing

\end{document}